\DeclareMathOperator{\Ran}{Ran}
\DeclareMathOperator{\Dom}{Dom}
\DeclareMathOperator{\Tr}{Tr}
\DeclareMathOperator{\Lip}{Lip}
\DeclareMathOperator{\supp}{supp}
\DeclareMathOperator{\esssup}{ess\ sup}
\DeclareMathOperator{\BMO}{BMO}
\DeclareMathOperator{\VMO}{VMO}
\DeclareMathOperator{\CMO}{CMO}
\DeclareMathOperator{\DOI}{DOI}
\DeclareMathOperator{\Smooth}{Smooth}
\newcommand{\sing}{\text{sing}}
\DeclareMathOperator{\const}{const}
\newcommand{\comp}{\text{\rm comp}}
\renewcommand\Im{\text{\rm Im}\,}
\renewcommand\Re{\text{\rm Re}\,}
\newcommand{\abs}[1]{\lvert#1\rvert}
\newcommand{\Abs}[1]{\left\lvert#1\right\rvert}
\newcommand{\norm}[1]{\lVert#1\rVert}
\newcommand{\Norm}[1]{\left\lVert#1\right\rVert}
\newcommand{\jap}[1]{\langle#1\rangle}
\newcommand{\bbT}{{\mathbb T}}
\newcommand{\bbR}{{\mathbb R}}
\newcommand{\bbC}{{\mathbb C}}
\newcommand{\bbN}{{\mathbb N}}
\newcommand{\bbZ}{{\mathbb Z}}
\newcommand{\bbD}{{\mathbb D}}
\newcommand{\wh}{\widehat}
\newcommand{\calH}{{\mathcal H}}
\newcommand{\calK}{{\mathcal K}}
\newcommand{\calM}{\mathcal{M}}
\newcommand{\calB}{\mathcal{B}}
\newcommand{\calR}{\mathcal{R}}
\newcommand{\Sch}{\mathbf{S}}
\numberwithin{equation}{section}
\renewcommand{\[}{\begin{equation}}
\renewcommand{\]}{\end{equation}}
\theoremstyle{plain}
\newtheorem{theorem}{\bf Theorem}[section]
\newtheorem*{theorem*}{Theorem 1.1$'$}
\newtheorem{lemma}[theorem]{\bf Lemma}
\newtheorem{proposition}[theorem]{\bf Proposition}
\theoremstyle{definition}
\newtheorem{definition}[theorem]{\bf Definition}
\newtheorem*{definition*}{\bf Definition}
\theoremstyle{remark}
\newtheorem*{remark*}{\bf Remark}
\newtheorem{example}[theorem]{\bf Example}
\newtheorem*{example*}{\bf Example}
\newcommand{\eps}{\varepsilon}
\newcommand{\loc}{\text{loc}}
\newcommand{\ac}{\text{(ac)}}
\newcommand{\1}{\mathbbm{1}}
\newcommand{\wf}{\widecheck{f}}
\DeclareFontFamily{U}{mathx}{\hyphenchar\font45}
\DeclareFontShape{U}{mathx}{m}{n}{<5> <6> <7> <8> <9> <10>
<10.95> <12> <14.4> <17.28> <20.74> <24.88> mathx10}{}
\DeclareSymbolFont{mathx}{U}{mathx}{m}{n}
\DeclareMathAccent{\widecheck}{0}{mathx}{"71}
\date{15 January 2019}
\title[Kato smoothness and functions of operators]{Kato smoothness and functions of perturbed self-adjoint operators}
\author{Rupert L. Frank}
\address[Rupert L. Frank]{Mathematisches Institut, Ludwig-Maximilans Universit\"at M\"unchen, Theresienstr. 39, 80333 M\"unchen, Germany, and Department of Mathematics, California Institute of Technology, Pasadena, CA 91125, USA}
\email{rlfrank@caltech.edu}
\author{Alexander Pushnitski}
\address[Alexander Pushnitski]{Department of Mathematics, King's College London, Strand, London, WC2R 2LS, UK}
\email{alexander.pushnitski@kcl.ac.uk}
\begin{document}

\begin{abstract}
We consider the difference $f(H_1)-f(H_0)$ for self-adjoint operators $H_0$ and $H_1$ 
acting in a Hilbert space. We establish a new class of estimates for 
the operator norm and the Schatten class norms of this difference. 
Our estimates utilise ideas of scattering theory and involve conditions on $H_0$ and $H_1$
in terms of the Kato smoothness. 
They allow for a much wider class of functions $f$ (including some unbounded ones) 
than previously available results do. 
As an important technical tool, we propose a new notion of Schatten class
valued smoothness and develop a new framework for double operator integrals. 
\end{abstract}

\maketitle

\section{Introduction}\label{sec.a}

\subsection{Setting of the problem}
Let $H_0$ and $H_1$ be self-adjoint operators in a Hilbert space $\calH$, and let $f$ be a complex-valued
function on the real line. 
In the framework of perturbation theory, the problem of estimating the difference 
$$
D(f):=f(H_1)-f(H_0)
$$
either in the operator norm or in a Schatten class norm 
often arises. 
First, to set the scene, we display some known estimates in this context:
\begin{align}
\norm{D(f)}_p&\leq C(p)\norm{f}_{\Lip(\bbR)}\norm{H_1-H_0}_p,
\quad 1<p<\infty;
\label{a2}
\\
\norm{D(f)}_\calB&\leq C\norm{f}_{B_{\infty,1}^1(\bbR)}\norm{H_1-H_0}_\calB,
\label{a1}
\\
\norm{D(f)}_1&\leq C\norm{f}_{B_{\infty,1}^1(\bbR)}\norm{H_1-H_0}_1.
\label{a22}
\end{align}
Here $\norm{\cdot}_p$ is the norm in the standard Schatten class
$\Sch_p$ and $\norm{\cdot}_\calB$ is the operator norm; 
$\Lip(\bbR)$ is the Lipschitz class and $B_{\infty,1}^1(\bbR)$ is a Besov class.  
As usual, the case $p=2$ is very simple (and goes back at least to Birman and Solomyak in 1960s)
and the important special cases $p=1$ and $p=\infty$ (i.e. \eqref{a1} and \eqref{a22}) are exceptional. 
The case $1<p<\infty$ is due to 
Potapov and Sukochev \cite{PotapovSukochev} and the 
cases $p=1$ and $p=\infty$ are due to Peller \cite{Peller1}. 
The estimate \eqref{a2} is obviously sharp ($\Lip$ cannot be replaced by any larger class) 
and the estimates \eqref{a1}, \eqref{a22} are very close to being sharp
(the Besov class $B_{\infty,1}^1(\bbR)$ cannot be replaced by any larger \emph{Besov} class).

In applications (we mainly have in mind the spectral theory of Schr\"odinger operators, see Section~\ref{sec.app})
one often has additional information on the perturbation $H_1-H_0$, which can be expressed in terms
of conditions of the Kato smoothness type. 
In this paper, we propose a framework which allows one to systematically use these smoothness 
conditions in order to improve the estimates on $D(f)$, both in the operator norm and in 
the Schatten class norms.

\subsection{Kato smoothness and the operator norm estimate}

The notion of Kato smoothness was introduced by Kato in his seminal paper 
\cite{Kato1} (with further developments in \cite{Kato2}). In the same paper \cite{Kato1}, 
it was used to prove the existence and completeness of wave operators. We will use this 
notion for a different purpose.

Let $H$ be a self-adjoint operator in a Hilbert space $\calH$
and let $G$ be an operator acting from $\calH$ to another Hilbert space $\calK$. 
We will say  that $G$ is \emph{Kato smooth with respect to $H$} (we will write $G\in \Smooth(H)$), if 
\begin{equation}
\norm{G}_{\Smooth(H)}:=\sup_{\norm{\varphi}_{L^2(\bbR)}=1}\norm{G\varphi(H)}_\calB<\infty.
\label{a3}
\end{equation}
This definition may look unfamiliar, but in fact we show in Section~\ref{sec.b} that it coincides 
with the standard definition of Kato smoothness. 
We will see that the advantage of definition \eqref{a3} is that it extends naturally to Schatten classes. 

We start by stating, somewhat informally, our first (very simple) result; a more precise statement 
will be given in Section~\ref{sec.e}. 

\begin{theorem}\label{thm.a1}
Let $H_0$ and $H_1$ be self-adjoint operators in $\calH$ such that the perturbation $H_1-H_0$ 
factorises as 
$$
H_1-H_0=G_1^*G_0,
$$
with $G_0\in\Smooth(H_0)$ and $G_1\in\Smooth(H_1)$. 
Then for any $f\in\BMO(\bbR)$, one has
\begin{equation}
\norm{D(f)}_\calB\leq2\pi\norm{f}_{\BMO(\bbR)}\norm{G_0}_{\Smooth(H_0)}\norm{G_1}_{\Smooth(H_1)}.
\label{a4}
\end{equation}
\end{theorem}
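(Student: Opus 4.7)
The plan is to combine a Fourier--Duhamel representation of $D(f)$ with Nehari's theorem on Hankel operators, using Kato smoothness to control the auxiliary $L^2$ factors. For $f\in\calS(\bbR)$ (with an approximation argument at the end to cover general BMO), Fourier inversion writes $f(H)=(2\pi)^{-1/2}\int\wh f(t)e^{itH}\,dt$, and Duhamel's formula yields $e^{itH_1}-e^{itH_0}=i\int_0^t e^{i(t-s)H_1}G_1^*G_0\,e^{isH_0}\,ds$. Swapping the order of integration in $(t,s)$ and setting $u=t-s$, we obtain
\begin{equation*}
D(f) = \frac{i}{\sqrt{2\pi}}\iint_{\bbR^2}\wh f(s+u)\,\epsilon(s,u)\,e^{iuH_1}G_1^*G_0\,e^{isH_0}\,du\,ds,
\end{equation*}
where the ``sign kernel'' $\epsilon(s,u)=\1_{s>0,u>0}-\1_{s<0,u<0}$ records the Duhamel integration limits.

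Pairing with $\psi_0,\psi_1\in\calH$ and setting $a(s):=G_0e^{isH_0}\psi_0$, $b(u):=G_1e^{-iuH_1}\psi_1$, the equivalence between definition \eqref{a3} and the classical Kato-smoothness characterization (to be established in Section~\ref{sec.b}) gives
\begin{equation*}
\|a\|_{L^2(\bbR;\calK)}^2\le 2\pi\,\|G_0\|_{\Smooth(H_0)}^2\|\psi_0\|^2, \qquad \|b\|_{L^2(\bbR;\calK)}^2\le 2\pi\,\|G_1\|_{\Smooth(H_1)}^2\|\psi_1\|^2.
\end{equation*}
The theorem thus reduces to showing that the bilinear form $\iint\wh f(s+u)\,\epsilon(s,u)\,\langle a(s),b(u)\rangle_\calK\,du\,ds$ is controlled by $\|f\|_{\BMO(\bbR)}\,\|a\|_{L^2}\|b\|_{L^2}$. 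The indicator $\epsilon$ splits this form into two pieces, one supported on $\bbR_+\!\times\bbR_+$ and one on $\bbR_-\!\times\bbR_-$, each of which is precisely a (vector-valued) Hankel bilinear form with symbol $\wh f$. By the continuous analogue of Nehari's theorem together with Fefferman's duality $H^1(\bbR)^*=\BMO(\bbR)$, the operator norm of each such form is bounded by $\|f\|_{\BMO(\bbR)}$ (the usual $H^\infty$ ambiguity is harmless, since $D$ annihilates the analytic ``constant'' piece). Assembling the two Kato bounds with the two Nehari bounds and tracking the normalizing constants produces \eqref{a4}.

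The main obstacle is not the core estimate but making it rigorous for general $f\in\BMO(\bbR)$, which need not lie in any classical Lebesgue space. The Fourier--Duhamel manipulation above is only formal for such $f$ (since $\wh f$ is at best a tempered distribution), so one must first establish \eqref{a4} for a suitable dense subclass---say, Schwartz $f$, or bounded $f$ with compactly supported Fourier transform---and then pass to the limit via a weak-$*$ approximation argument. This forces one to interpret $D(f)$ carefully for BMO $f$ (a point that the ``more precise statement'' promised in Section~\ref{sec.e} is designed to address) and to verify the required continuity of $D(f)$ with respect to $H^1$--BMO duality.
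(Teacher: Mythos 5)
Your argument is correct in substance, but it takes a genuinely different route from the paper's: it is, in effect, the Fourier transform of the paper's proof. The paper works on the ``spectral side'': it represents $d_f[u,v]$ via Stone's formula and the resolvent identity, observes that $F_{u,v}(z)=(G_0R_0(z)u,G_1R_1(\bar z)v)$ lies in $H^1(\bbC_+)$ precisely because of the resolvent form \eqref{b1} of Kato smoothness, and then applies Fefferman duality directly to the pairing $T_f(F_{u,v})$. You work on the ``time side'': Fourier inversion plus Duhamel replaces the resolvent identity, the $L^2_t$-bound $\int\norm{Ge^{itH}\psi}^2dt\le 2\pi\norm{G}^2_{\Smooth(H)}\norm{\psi}^2$ replaces \eqref{b1}, and the Hankel forms with kernel $\wh f(s+u)$ on $\bbR_+\times\bbR_+$ and $\bbR_-\times\bbR_-$ are exactly the Fourier images of the two pairings $T_f(F_{u,v})$ and $\overline{T_{\bar f}(F^*_{u,v})}$; Nehari--Fefferman then plays the role that Fefferman duality plays in the paper. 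Both proofs rest on the same mechanism (Kato smoothness manufactures $H^1$ data, BMO pairs against it), and I have checked that with the unitary normalization of the Fourier transform your constants do assemble to exactly $2\pi$ (each quadrant contributes $\sqrt{2\pi}\norm{f}_{\BMO}$ against the two $\sqrt{2\pi}\norm{G_j}_{\Smooth(H_j)}$ factors and the prefactor $(2\pi)^{-1/2}$), consistent with the sharpness example in Section~\ref{sec.g}. What the paper's route buys is that it never needs $\wh f$ to exist as a function, so unbounded BMO symbols are handled with only the mild regularization already built into Stone's formula, and the same resolvent computation feeds directly into the Birman--Solomyak/DOI machinery used for the Schatten bounds; your route is arguably more transparent for bounded $f$ with integrable Fourier transform but pushes all the difficulty into the density/limiting argument you flag at the end.

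Two points need more care than your sketch gives them. First, the Duhamel identity $e^{itH_1}-e^{itH_0}=i\int_0^te^{i(t-s)H_1}G_1^*G_0e^{isH_0}ds$ cannot be taken literally: the factorization $H_1-H_0=G_1^*G_0$ holds only in the sesquilinear-form sense \eqref{e0}, and $G_1^*$ need not exist as a stand-alone operator ($G_0$ in the paper's sharpness example is not even closable). You must differentiate $s\mapsto(e^{isH_0}\psi_0,e^{-i(t-s)H_1}\psi_1)$ for $\psi_0,\psi_1$ in the dense sets $L^\infty_\comp(H_j)\cap\Dom H_j$ and integrate the resulting identity $(G_0e^{isH_0}\psi_0,G_1e^{-i(t-s)H_1}\psi_1)$, which is exactly the form-level bookkeeping the paper performs with resolvents. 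Second, the limiting argument for general $f\in\BMO(\bbR)$, which you correctly identify as the main obstacle, is not a routine afterthought: one needs a dense subclass (the paper uses rational functions, with $*$-weak density proved in Lemma~\ref{lma.d1}) together with $*$-weak continuity of $f\mapsto d_f[u,v]$, and the cleanest way to get the latter is precisely the representation of $d_f[u,v]$ as a pairing of $f$ against fixed $H^1$ functions --- i.e., the endpoint of the paper's argument. So completing your proof honestly requires reconstructing most of that machinery; it is doable, but it is not shorter than the paper's route.
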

Here $\BMO(\bbR)$ is the class of functions with \emph{bounded mean oscillation}; 
we recall the description of this class in Section~\ref{sec.d} and fix a suitable norm on it 
(there are many equivalent norms on $\BMO(\bbR)$, but by choosing a specific norm,
we make the constant in front of the right hand side of \eqref{a4} equal to $2\pi$).

Observe that functions in $\BMO(\bbR)$ include some unbounded ones, such as 
$f(\lambda)=\log\abs{\lambda}$. 
This is in sharp contrast with the estimate \eqref{a1}, where $f$ has to be
bounded, continuous and everywhere differentiable (see e.g. \cite{AP} for the differentiability statement).

\subsection{$\Sch_p$-valued smoothness and Schatten norm estimates}
For $0<p<\infty$, let $\Sch_p$ be the standard Schatten class with the (quasi-)norm $\norm{\cdot}_p$
(see Section~\ref{sec.a6} for the definition). 
Generalising \eqref{a3}, we will say that $G\in\Smooth_{p}(H)$  if 
$$
\norm{G}_{\Smooth_{p}(H)}:=\sup_{\norm{\varphi}_{L^2(\bbR)}=1}\norm{G\varphi(H)}_{p}<\infty.
$$
Our main result is the following Schatten class estimate (it will be restated more precisely as
Theorem~\ref{thm.e3} in Section~\ref{sec.e}). 

\begin{theorem}\label{thm.a2}
Let $p,q,r$ be finite positive numbers such that $\frac1p=\frac1q+\frac1r$.
Let $H_1$ and $H_0$ be self-adjoint operators in $\calH$ such that $H_1-H_0=G_1^*G_0$ 
with $G_0\in\Smooth_{q}(H_0)$, $G_1\in\Smooth_{r}(H_1)$.
Then for all $f\in B_{p,p}^{1/p}(\bbR)\cap \BMO(\bbR)$, one has
\[
\norm{D(f)}_p
\leq 
C(p)\norm{f}_{B_{p,p}^{1/p}(\bbR)}
\norm{G_0}_{\Smooth_{q}(H_0)}
\norm{G_1}_{\Smooth_{r}(H_1)}.
\label{a4a}
\]
This extends to $q=\infty$ (resp. to $r=\infty$), if one replaces $\Smooth_q(H_0)$ (resp. $\Smooth_r(H_1)$) 
by $\Smooth(H_0)$ (resp. by $\Smooth(H_1)$). 
\end{theorem}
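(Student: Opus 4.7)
The plan is to combine a Littlewood--Paley decomposition of $f$ with a double operator integral (DOI) representation of $D(f)$ adapted to the factorisation $H_1-H_0=G_1^*G_0$, using the $\Smooth_p$-valued Kato smoothness framework introduced earlier in the paper to control each dyadic block.

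First, I would write $f=\sum_{n\in\bbZ} f_n$ with $\widehat{f_n}$ supported in the dyadic annulus $\{\abs{t}\sim 2^n\}$, and use linearity of $D$ to pass the decomposition through (convergence of the resulting series in $\Sch_p$ is part of the statement that must be verified, using $f\in B_{p,p}^{1/p}\cap\BMO$). For each block, Duhamel's formula together with the Fourier inversion representation $f_n(H)=(2\pi)^{-1/2}\int\widehat{f_n}(t)\,e^{itH}\,dt$ produces an integral representation
\[
D(f_n)=\iint K_n(t,s)\,\bigl(e^{itH_1}G_1^*\bigr)\bigl(G_0 e^{isH_0}\bigr)\,dt\,ds,
\]
where $K_n$ is a cut-off, mollified version of $\widehat{f_n}$ with size commensurate with the dyadic block; this is naturally interpreted as a DOI in the new framework of the paper applied to $H_1-H_0=G_1^*G_0$.

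Second, the key block estimate I would aim for is
\[
\norm{D(f_n)}_p\leq C\, 2^{n/p}\,\norm{f_n}_{L^p(\bbR)}\,\norm{G_0}_{\Smooth_q(H_0)}\,\norm{G_1}_{\Smooth_r(H_1)}.
\]
This should follow by combining the Plancherel reformulation of $\Smooth_p$-smoothness --- namely that $a\mapsto \int a(t)\,G_0 e^{itH_0}\,dt$ is bounded from $L^2(\bbR)$ into $\Sch_q$ with norm $\norm{G_0}_{\Smooth_q(H_0)}$, and similarly for $G_1$ into $\Sch_r$ --- with H\"older's inequality for Schatten classes at $\frac1p=\frac1q+\frac1r$ and Young's inequality applied to $K_n$. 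Summing over $n$ via the $\Sch_p$ (quasi)triangle inequality and the Besov equivalence $\norm{f}_{B_{p,p}^{1/p}(\bbR)}^p\asymp \sum_n 2^n \norm{f_n}_{L^p(\bbR)}^p$ then yields \eqref{a4a}. The endpoint case $q=\infty$ (resp.\ $r=\infty$) is handled by substituting the operator-norm Kato smoothness used in Theorem~\ref{thm.a1} for the $\Sch_\infty$ bound in the corresponding factor.

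I anticipate the main obstacle to be the $\Sch_p$-valued DOI bound on a single dyadic block. The defining inequality \eqref{a3} and its $\Sch_p$-analogue only directly control $G\varphi(H)$ for scalar $\varphi$, and one must upgrade this to an $\Sch_p$-valued bilinear bound on the pairing of $t\mapsto G_0 e^{itH_0}$ and $s\mapsto e^{isH_1}G_1^*$ against the kernel $K_n$. This is presumably precisely what the new notion of Schatten-valued smoothness and the new DOI framework advertised in the abstract are designed to accomplish. A secondary technical difficulty is the quasi-Banach case $p<1$, where the triangle inequality in $\Sch_p$ picks up constants depending on $p$ and one must be careful when summing the dyadic blocks, together with the justification of absolute convergence of the integral representation for each $f_n$ before passing to Schatten norms.
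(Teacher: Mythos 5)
Your outline shares the broad philosophy of the paper (factorise $H_1-H_0=G_1^*G_0$ and exploit the $L^2\to\Sch_q$ mapping property encoded in $\Smooth_q$), but it diverges structurally and, as written, does not close. The paper does \emph{not} dyadically decompose $f$ at the level of perturbation theory: it proves the Birman--Solomyak identity $D(f)=\DOI(\wf)$ (Theorem~\ref{thm.e2}) and then splits the estimate into two independent pieces --- a bound $\norm{\DOI(a)}_p\leq A_{q,r}\norm{a}_p$ valid for \emph{arbitrary} $a\in\Sch_p$ (Theorem~\ref{thm.c6}), and the observation that $\wf$ is a Hankel operator so that $\norm{\wf}_p\lesssim\norm{f}_{B^{1/p}_{p,p}}$ follows from Peller's theorem (Lemma~\ref{lma.d4}). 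All Littlewood--Paley work is absorbed into the cited Hankel result; the perturbation-theoretic half never sees it.

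The step you flag as the ``main obstacle'' is genuinely where your plan fails, and your suggested tools do not resolve it. The Plancherel reformulation correctly says $\alpha\mapsto\int\alpha(t)\,G_0e^{itH_0}\,dt$ is bounded $L^2\to\Sch_q$ with norm $\norm{G_0}_{\Smooth_q(H_0)}$. But if you Schmidt-decompose $K_n(\sigma,s)=\sum_m s_m\alpha_m(\sigma)\beta_m(s)$, apply the scalar bound to each factor, use Schatten--H\"older, and sum with the triangle inequality, you control $\sum_m s_m=\norm{K_n}_1$, i.e.\ the \emph{trace} norm of the block kernel rather than its $\Sch_p$ norm. Upgrading from $\norm{a}_1$ to $\norm{a}_p$ is exactly the content of Theorem~\ref{thm.c6}, which factorises $\DOI(a)=T_1^*T_0$ through $\ell^2(\bbN;\calK)$ using the vector-valued square-function inequality of Theorem~\ref{thm.b2} and then runs complex interpolation via Lemma~\ref{lma.b6}; none of this is in your outline. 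Moreover, even granting the block estimate $\norm{D(f_n)}_p\lesssim 2^{n/p}\norm{f_n}_{L^p}A_{q,r}$, your summation closes only for $0<p\leq1$, where $\norm{\sum A_n}_p^p\leq\sum\norm{A_n}_p^p$. For $p>1$ the triangle inequality yields $\norm{D(f)}_p\lesssim\sum_n 2^{n/p}\norm{f_n}_{L^p}\asymp\norm{f}_{B^{1/p}_{p,1}}$, a strictly smaller space than $B^{1/p}_{p,p}$. Recovering the sharp $B^{1/p}_{p,p}$ condition from dyadic blocks requires the almost-orthogonality structure specific to Hankel operators, which the blocks $D(f_n)$ do not possess; this is precisely why the paper quotes Peller's theorem rather than re-deriving it.
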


We recall the definition of the Besov class $B_{p,p}^{1/p}(\bbR)$ in Section~\ref{sec.d}. 
The constant $C(p)$ in \eqref{a4a} depends only on the choice of the functional $\norm{\cdot}_{B_{p,p}^{1/p}(\bbR)}$
in this class. 
We say ``the functional" rather than ``the norm", because technically $\norm{\cdot}_{B_{p,p}^{1/p}(\bbR)}$ is a 
semi-(quasi)norm; \emph{semi} because it vanishes on all polynomials and \emph{quasi} because it satisfies the triangle inequality of the form 
\begin{align*}
\norm{f+g}_{B_{p,p}^{1/p}(\bbR)}&\leq \norm{f}_{B_{p,p}^{1/p}(\bbR)}+\norm{g}_{B_{p,p}^{1/p}(\bbR)}, \quad p\geq1,
\\
\norm{f+g}_{B_{p,p}^{1/p}(\bbR)}^p&\leq \norm{f}_{B_{p,p}^{1/p}(\bbR)}^p+\norm{g}_{B_{p,p}^{1/p}(\bbR)}^p, \quad 0<p<1.
\end{align*}
Requiring that $f\in\BMO(\bbR)$ reduces the arbitrary polynomial in the definition 
of $f$ to an arbitrary additive constant. Observe that for $f=\const$, we have $D(f)=0$. 
  
To illustrate the type of local singularities allowed for functions $f\in B_{p,p}^{1/p}(\bbR)$, 
consider the following example. 
Let $\chi_0\in C_0^\infty(\bbR)$ be a function which equals $1$ in a neighbourhood of the origin
and vanishes outside the interval $(-c,c)$ with some $0<c<1$.
Fix $\alpha\in\bbR$, $a_+,a_-\in\bbC$, and consider the function 
$$
F_\alpha(x)=
\begin{cases}
a_+\chi_0(x)\abs{\log\abs{x}}^{-\alpha}, & x>0,
\\
a_-\chi_0(x)\abs{\log\abs{x}}^{-\alpha}, & x<0.
\end{cases}
$$
\begin{proposition}\label{prp.a1}
Let $F_\alpha$ be as defined above and let $0<p<\infty$. 
\begin{enumerate}[(i)]
\item
If $a_+\not=a_-$, then $f\in B_{p,p}^{1/p}(\bbR)$
if and only if $\alpha>1/p$. 
\item
If $a_+=a_-\not=0$, then $f\in B_{p,p}^{1/p}(\bbR)$  if and only if $\alpha+1>1/p$.
\end{enumerate}
\end{proposition}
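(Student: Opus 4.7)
The plan is to reduce the question to asymptotics of the $L^p$-modulus of continuity of $F_\alpha$ near $t=0$ and to apply the standard difference characterisation of the Besov norm,
\[
 \norm{f}_{B_{p,p}^{1/p}(\bbR)}^p \sim \int_0^\infty \omega_p^{(N)}(f,t)^p\,\frac{dt}{t^2},
\]
valid for any integer $N>1/p$ (first differences suffice when $p>1$). Since $F_\alpha$ is compactly supported and smooth away from the origin, only the behaviour of $\omega_p^{(N)}(F_\alpha,t)$ as $t\downarrow 0$ is at issue.

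The key step is to split $F_\alpha$ into its even and odd parts about the origin,
\[
 F_\alpha=\frac{a_+ + a_-}{2}\,F^{(e)}_\alpha+\frac{a_+ - a_-}{2}\,F^{(o)}_\alpha,
\]
with $F^{(e)}_\alpha(x)=\chi_0(x)|\log|x||^{-\alpha}$ and $F^{(o)}_\alpha(x)=\chi_0(x)\sgn(x)|\log|x||^{-\alpha}$, and compute the moduli separately. For the odd part, on $x\in(-t/2,0)$ the values at $x$ and $x+t$ have opposite signs, so
\[
F^{(o)}_\alpha(x+t)-F^{(o)}_\alpha(x)=|\log(x+t)|^{-\alpha}+|\log|x||^{-\alpha}
\]
is a sum of non-negative terms of order $|\log t|^{-\alpha}$ on a subset of measure $\sim t$; thus $\omega_p(F^{(o)}_\alpha,t)^p\sim t|\log t|^{-\alpha p}$ and the integral $\int_0^c t^{-1}|\log t|^{-\alpha p}\,dt$ converges iff $\alpha p>1$. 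For the even part, parameterising $x=-s$ with $s\in(0,t)$,
\[
F^{(e)}_\alpha(x+t)-F^{(e)}_\alpha(x)=|\log(t-s)|^{-\alpha}-|\log s|^{-\alpha}
\]
vanishes at $s=t/2$, and a Taylor expansion of $(|\log t|+u)^{-\alpha}$ in $u/|\log t|$ supplies an extra factor $|\log t|^{-1}$, so this difference has magnitude $\sim|\log t|^{-\alpha-1}$ in the bulk. Combined with a mean-value estimate for $|x|\gtrsim t$ (where $(|\log x|^{-\alpha})'=\alpha|\log x|^{-\alpha-1}/x$), this yields $\omega_p(F^{(e)}_\alpha,t)^p\sim t|\log t|^{-(\alpha+1)p}$, and so $F^{(e)}_\alpha\in B_{p,p}^{1/p}$ iff $(\alpha+1)p>1$.

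Case (ii) follows at once since $F_\alpha$ is a non-zero multiple of $F^{(e)}_\alpha$. In case (i), the ``if'' direction is clear: $\alpha>1/p$ forces both components into $B_{p,p}^{1/p}$. For the ``only if'' direction I would bypass the decomposition and argue directly: on $x\in(-t/2,-t/4)$ both $|\log|x||$ and $|\log(x+t)|$ equal $|\log t|(1+o(1))$ as $t\to 0^+$, so
\[
|F_\alpha(x+t)-F_\alpha(x)|\geq c|a_+-a_-|\,|\log t|^{-\alpha}
\]
on a set of measure $\sim t$, giving $\omega_p(F_\alpha,t)^p\gtrsim|a_+-a_-|^p t|\log t|^{-\alpha p}$ and hence divergence of the Besov integral for $\alpha p\leq 1$.

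The principal technical obstacle is the upper bound $\omega_p(F^{(e)}_\alpha,t)^p\lesssim t|\log t|^{-(\alpha+1)p}$: this demands a careful region-by-region analysis for $s\in(0,t/2)$, distinguishing the bulk $s\gtrsim t$ (where Taylor expansion around $|\log t|$ supplies the extra $|\log t|^{-1}$) from the deep interior where $|\log s|$ is comparable to or exceeds $|\log t|$ (there the shrinking domain compensates for the loss of cancellation), together with the region $|x|\gtrsim t$ controlled by the mean-value theorem. A secondary complication is that for $p\leq 1$ one must replace first differences by higher-order differences of order $N>1/p$ in the Besov characterisation and verify that the same dominant small-$t$ asymptotics persist.
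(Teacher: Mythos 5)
Your outline is correct, but it takes a genuinely different route from the paper's. The paper works entirely on the Fourier side: it cites (from \cite{PYa}) the asymptotics $\wh F_\alpha(t)\sim\frac{a_+-a_-}{2\pi i}\,\frac1t(\log\abs{t})^{-\alpha}$ when $a_+\neq a_-$, resp.\ $\wh F_\alpha(t)\sim a_+\alpha\,\frac1t(\log\abs{t})^{-\alpha-1}$ when $a_+=a_-$, with arbitrarily differentiable error terms, and then plugs these directly into the dyadic Littlewood--Paley definition \eqref{A1} to obtain $2^j\norm{F_\alpha*\wh w_j}_{L^p}^p\sim Cj^{-\alpha p}$ (resp.\ $\sim Cj^{-(\alpha+1)p}$) as $j\to+\infty$, whence the $\ell^1$-summability threshold. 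You instead work on the physical side via the modulus-of-continuity characterisation of $B_{p,p}^{1/p}$, estimating $\omega_p^{(N)}(F_\alpha,t)^p\sim t\abs{\log t}^{-\alpha p}$ for the odd part and $\sim t\abs{\log t}^{-(\alpha+1)p}$ for the even part. Your even/odd decomposition is the real-variable mirror of the fact that the leading Fourier coefficient $\frac{a_+-a_-}{2\pi i}$ vanishes precisely when $a_+=a_-$, promoting the decay rate from $(\log\abs{t})^{-\alpha}$ to $(\log\abs{t})^{-\alpha-1}$. The paper's approach outsources the only delicate analysis (the asymptotics \eqref{A2}--\eqref{A4}) to the reference, after which the Besov computation is mechanical; yours is self-contained but pays the price you correctly flag at the end: a region-by-region upper bound for the even modulus (the bulk $s\gtrsim t$, the deep interior $s\ll t$, and the tail $\abs{x}\gtrsim t$ via the mean-value estimate), and the use of higher-order differences $N>1/p$ when $p\leq 1$ --- without which the mean-value region alone already overshoots the claimed $t\abs{\log t}^{-(\alpha+1)p}$ at $p=1$. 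Both obstacles are identified and surmountable, so the plan is sound. One simplification worth noting for case (i): since $f\mapsto f(-\cdot)$ is an isometry of $B_{p,p}^{1/p}(\bbR)$, taking the odd part is a bounded projection, so $F_\alpha\in B_{p,p}^{1/p}$ already forces $\frac{a_+-a_-}{2}F^{(o)}_\alpha\in B_{p,p}^{1/p}$; this gives the ``only if'' direction directly from the odd-part computation, with no need for the separate ad hoc lower bound.
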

In essence, this is an elementary computation using the definition of $B_{p,p}^{1/p}(\bbR)$;
we sketch the proof in the Appendix. 
 
Again, we see that for $p>1$, the functions $F_\alpha\in B_{p,p}^{1/p}(\bbR)$ may be unbounded. 
It is also clear that, in contrast with \eqref{a2}, 
$F_\alpha$ is never in $\Lip$, apart from the trivial cases $a_+=a_-=0$ or $\alpha=0$. 

We note briefly that Theorems~\ref{thm.a1} and \ref{thm.a2} are sharp in the sense that 
the corresponding estimates are saturated for certain operators $H_0$ and $H_1$; 
see Theorem~\ref{thm.sharp} below.

\subsection{Key ideas of the proof}

For a function $f:\bbR\to\bbC$, we denote by $\wf$ the 
divided difference 
\[
\wf(x,y):=\frac{f(x)-f(y)}{x-y}, \quad x,y\in\bbR.
\label{d1}
\]
The Birman-Solomyak formula, which goes back to \cite{BS1} (see also \cite{BS2} for a modern exposition), 
represents the difference $D(f)$
as the \emph{double operator integral} (DOI): 
\[
D(f)=\int_\bbR\int_\bbR  \wf(x,y) dE_{H_1}(x)(H_1-H_0)dE_{H_0}(y), 
\label{BS}
\]
where $E_{H_0}$ (resp. $E_{H_1}$) is the projection-valued spectral measure of $H_0$ (resp. of $H_1$). 
The standard approach (which again goes back to Birman and Solomyak) to the problem of estimating the norm of $D(f)$ 
  is to represent
the map $f\mapsto D(f)$ as a composition of two maps,
\[
f\mapsto \wf \mapsto \int_\bbR\int_\bbR  \wf(x,y) dE_{H_1}(x)(H_1-H_0)dE_{H_0}(y).
\label{fact}
\]
To explain this further, let us
first recall the strategy of the proof of the estimate \eqref{a1}. 
One proves (see \cite{Peller1}) separately the estimates
\begin{align*}
\Norm{\int_\bbR\int_\bbR \wf(x,y) dE_{H_1}(x)(H_1-H_0)dE_{H_0}(y)}
&\leq 
C\norm{\wf}_*\norm{H_1-H_0}_\calB, 
\\
\norm{\wf}_*
&\leq 
C\norm{f}_{B^1_{\infty,1}(\bbR)},
\end{align*}
where $\norm{\cdot}_*$ is a certain norm on the set of integral kernels 
(functions of two variables). 
Putting them together and using the Birman-Solomyak formula, 
this yields \eqref{a1}. 

We use the composition \eqref{fact} as well, but the underlying estimates are different. 
Essentially, we develop an alternative version of the theory of DOI as follows. 
We fix $H_0$, $H_1$ and $G_0\in\Smooth(H_0)$, $G_1\in\Smooth(H_1)$ and consider the map 
$$
a\mapsto \int_\bbR\int_\bbR  a(x,y) dE_{H_1}(x)G_1^*G_0dE_{H_0}(y);
$$
here $a$ is an arbitrary bounded operator in $L^2(\bbR)$ with the integral kernel $a(x,y)$. 
We prove the estimates
\begin{align}
\Norm{\int_\bbR\int_\bbR a(x,y) dE_{H_1}(x)G_1^*G_0dE_{H_0}(y)}
&\leq
\norm{a}_\calB
\norm{G_0}_{\Smooth(H_0)}
\norm{G_1}_{\Smooth(H_1)},
\label{a5a}
\\
\norm{\wf}_\calB
&\leq 
2\pi \norm{f}_{\BMO(\bbR)}. 
\label{a5b}
\end{align}
These estimates, together with the Birman-Solomyak formula, 
yield the proof of Theorem~\ref{thm.a1}. 
Theorem~\ref{thm.a2} is obtained from the Schatten class versions of  \eqref{a5a} and \eqref{a5b}. 

We note that while \eqref{a5a} (and its Schatten class version) is new, the 
estimate \eqref{a5b} is essentially well known. 
In fact, the operator with the integral kernel $\wf(x,y)$ is 
a Hankel operator in disguise; this is well known in 
the Hankel operator community, and  \eqref{a5b} easily follows from there.

\subsection{Some applications}\label{sec.app}
Here we briefly mention some applications of Theorems~\ref{thm.a1} and \ref{thm.a2}; these are developed in detail 
in the forthcoming publication \cite{II}. 
Let 
$$
H_0=-\Delta, \quad H_1=-\Delta+V
\quad\text{ in $L^2(\bbR^d)$, $d\geq1$,}
$$
where the real-valued potential $V$ satisfies the bound
$$
\abs{V(x)}\leq C(1+\abs{x})^{-\rho}, \quad \rho>1.
$$
Under these assumptions, the absolutely continuous spectrum of both $H_0$ and $H_1$ coincides with $[0,\infty)$. 
In applications to mathematical physics (see e.g.  \cite{FP0}), one is often interested in functions $f$
having a cusp-type singularity on the absolutely continuous spectrum and smooth elsewhere. It is also easy to reduce the question 
to functions $f$ compactly supported on $(0,\infty)$. 
\begin{theorem}\cite{II}\label{thm.a4}
\begin{enumerate}[\rm (i)]
\item
Assume $\rho>1$.
Then  any $f\in \BMO(\bbR)$ with compact support in $(0,\infty)$, we have $D(f)\in\calB$. 
\item
Assume $1<\rho\leq d$. 
Then for any $p>\dfrac{d-1}{\rho-1}$ and for any $f\in B_{p,p}^{1/p}(\bbR)$
with compact support in $(0,\infty)$, we have $D(f)\in\Sch_p$. 
\item
Assume $\rho>d$. Then for any $p>d/\rho$ and for any $f\in B_{p,p}^{1/p}(\bbR)$
with compact support in $(0,\infty)$, we have $D(f)\in\Sch_p$. 
\end{enumerate}
\end{theorem}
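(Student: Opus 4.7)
The plan is to deduce Theorem~\ref{thm.a4} from Theorems~\ref{thm.a1} and \ref{thm.a2} by factorizing the perturbation, spectrally localizing via the compact support of $f$, and establishing the required (localized) Kato smoothness or Schatten-class Kato smoothness of $\langle x\rangle^{-\rho/2}$ with respect to both $H_0$ and $H_1$ on compact subsets of $(0,\infty)$. First I would write $V=G_1^*G_0$ with $G_0:=|V|^{1/2}$ and $G_1:=\sgn(V)|V|^{1/2}$, so that $|G_j(x)|\le C\langle x\rangle^{-\rho/2}$, and fix a cut-off $\chi\in C_c^\infty((0,\infty))$ with $\chi\equiv 1$ on $\supp f$.

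The compact-support hypothesis is exploited through the algebraic identity
\[
D(f)=\chi(H_1)D(f)\chi(H_0)+f(H_1)D(\chi)+D(\chi)f(H_0),
\]
which is immediate from $f\chi=f$. The main piece $\chi(H_1)D(f)\chi(H_0)$ is a DOI with kernel $\chi(x)\wf(x,y)\chi(y)$; when one applies \eqref{a5a} (and its Schatten analogue) factored through $G_1^*G_0$, only $L^2$ test functions supported in $\supp\chi\Subset(0,\infty)$ appear, so only the \emph{localized} smoothness norms
\[
\sup_{\substack{\|\varphi\|_{L^2(\bbR)}=1\\ \supp\varphi\subset K}}\|\langle x\rangle^{-\rho/2}\varphi(H_j)\|_{\Sch_s},\qquad K\Subset(0,\infty),
\]
enter, rather than global $\Smooth_s$ norms. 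The two remainders $f(H_j)D(\chi)$ and $D(\chi)f(H_0)$ are estimated by applying Theorems~\ref{thm.a1}--\ref{thm.a2} directly to the smooth compactly supported $\chi$, which lies in every Besov class and in $\BMO$.

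The core analytic input is thus finiteness of the displayed supremum for the relevant $s$ ($s=\infty$ in part (i), and $s=2p$ in parts (ii)--(iii), corresponding to the symmetric choice $q=r=2p$ in Theorem~\ref{thm.a2}). For $H_0=-\Delta$ this reduces, via the Fourier side, to sharp bounds on the compressed spectral density $\langle x\rangle^{-\rho/2}(dE_{H_0}/d\lambda)\langle x\rangle^{-\rho/2}$ at $\lambda\in K$: part (i) is the Agmon--Kato limiting absorption principle for $\rho>1$; part (iii), where $\rho>d$, uses Hilbert--Schmidt / trace-ideal bounds that saturate precisely at $p>d/\rho$; and part (ii), where $1<\rho\le d$, uses a Stein--Tomas-type $L^2$-restriction estimate to the spheres $|\xi|^2=\lambda$ of dimension $d-1$, from which the critical exponent $p>(d-1)/(\rho-1)$ emerges. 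Transfer of these bounds from $H_0$ to $H_1$ is carried out by iterating the second resolvent identity; the short-range hypothesis $\rho>1$, combined with Kato's theorem on the absence of embedded positive eigenvalues for short-range Schr\"odinger operators, ensures that the limiting absorption principle (and its Schatten-class refinement) propagates to $H_1$ uniformly over every compact $K\Subset(0,\infty)$.

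The main obstacle is establishing the sharp Schatten-class smoothness at the endpoint $p=(d-1)/(\rho-1)$ in part (ii). This exponent reflects the genuinely oscillatory and restrictive nature of the spectral density of $-\Delta$, which is concentrated on the sphere $|\xi|^2=\lambda$, and its derivation requires the full strength of a Stein--Tomas-type restriction theorem (or an equivalent oscillatory-integral analysis using curvature of the sphere); a direct Cwikel-type estimate is too weak to reach this endpoint. A secondary, largely bookkeeping, technical step is making precise the localized version of Theorems~\ref{thm.a1}--\ref{thm.a2} described above; this reduces to inspecting the proofs of \eqref{a5a}--\eqref{a5b} and noting that the smoothness norms of $G_0,G_1$ are tested only against $L^2$-functions supported in the spectral support of the DOI kernel, which here lies in $\supp\chi$.
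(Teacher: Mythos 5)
The paper does not actually prove Theorem~\ref{thm.a4}: it is stated as an application and deferred to the forthcoming publication \cite{II}, with only the remark that the proof relies on \emph{local} $\Sch_p$-valued smoothness $GE_{H_0}(\Delta)\in\Smooth_p(H_0)$, $\Delta\Subset(0,\infty)$, and on the tools of Section~\ref{sec.g7}. Your high-level plan — localize both the DOI kernel and the smoothness hypotheses to a compact subset of $(0,\infty)$, establish local $\Sch_s$-smoothness of $\jap{x}^{-\rho/2}$ for $H_0=-\Delta$ via the limiting absorption principle (operator norm) and Stein--Tomas/trace-ideal bounds (Schatten norms), and propagate to $H_1$ by resolvent iteration using absence of positive embedded eigenvalues for $\rho>1$ — is entirely in the spirit of the paper's remarks, and the exponents $(d-1)/(\rho-1)$ and $d/\rho$ do arise from exactly the sphere-restriction and Cwikel-type arguments you name.

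There is, however, a genuine gap in the reduction step. Your identity $D(f)=\chi(H_1)D(f)\chi(H_0)+f(H_1)D(\chi)+D(\chi)f(H_0)$ is algebraically correct, but the claim that the two remainders are ``estimated by applying Theorems~\ref{thm.a1}--\ref{thm.a2} directly to $\chi$'' only controls $D(\chi)$, not the compositions $f(H_1)D(\chi)$ and $D(\chi)f(H_0)$. The operators $f(H_j)$ are \emph{unbounded} for general $f\in\BMO(\bbR)$ or $f\in B_{p,p}^{1/p}(\bbR)$, and such $f$ are precisely the new functions Theorem~\ref{thm.a4} is designed to reach. Even for bounded $f$, the naive bound $\norm{f(H_1)D(\chi)}_p\leq\norm{f}_\infty\norm{D(\chi)}_p$ brings in $\norm{f}_\infty$, which is not dominated by $\norm{f}_{\BMO}$ or $\norm{f}_{B_{p,p}^{1/p}}$, so a density or $*$-weak approximation argument cannot remove this factor. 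Rewriting $f(H_1)D(\chi)=f(H_1)(1-\chi)(H_0)$ exposes the difficulty: the $H_0$-side of this composition is spectrally localized \emph{outside} $\supp\chi$, in particular near the threshold $\lambda=0$, which is exactly where neither the global nor the $\chi$-localized Kato smoothness of $\jap{x}^{-\rho/2}$ is available. This off-diagonal ``arm'' of the cross-shaped support of $\wf$ needs its own argument (e.g.\ a more careful treatment of the kernel $f(x)(1-\chi_1(y))/(x-y)$ combined with relative $H_0$-boundedness away from the compact set, and a mechanism to absorb the $f(H_j)$ prefactor); it is not a direct consequence of Theorems~\ref{thm.a1}--\ref{thm.a2} applied to $\chi$, and it is not a bookkeeping step.
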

In the $p=1$ case, this is the result of our previous publication \cite{FP0}. 

In the proof of Theorem~\ref{thm.a4}, the concept of \emph{local} $\Sch_p$-valued smoothness
is important; in other words, one needs inclusions of the type $GE_{H_0}(\Delta)\in \Smooth_p(H_0)$, 
where $\Delta\subset (0,\infty)$. We develop some tools for this in Section~\ref{sec.g7}.

\subsection{The structure of the paper}
In Section~\ref{sec.b} we discuss the classical Kato smoothness and 
in Section~\ref{sec.bb} we introduce and study the $\Sch_p$-valued smoothness. 
In Section~\ref{sec.c} we develop our version of the theory of DOI and prove 
the estimate \eqref{a5a} and its Schatten class version. The key idea of the proof 
is a certain factorisation of the DOI and a subsequent use of interpolation on each factor. 
In Section~\ref{sec.d} we derive the estimate \eqref{a5b} (and its Schatten class version) from the known
estimates for Hankel operators. 
In Section~\ref{sec.e} we put all the components together and prove 
Theorems~\ref{thm.a1} and \ref{thm.a2}. 
Section~\ref{sec.g} contains some additional information. First
we present an example which 
illustrates the sharpness of our main results. Then  
 we consider some extensions: to ``quasicommutators"
$$
f(H_1)J-Jf(H_0)
$$
and to operators of the form
$$
\varphi_1(H_1)^*(f(H_1)-f(H_0))\varphi_0(H_0).
$$
The latter operator is important in applications, which we
develop in a separate paper \cite{II}. 
In Appendix, we sketch the proof of Proposition~\ref{prp.a1} and of another
technical statement of a similar nature.

\subsection{Notation} \label{sec.a6}
Throughout the paper, $\calH$ and $\calK$ are complex separable Hilbert spaces.  
If $H$ is a self-adjoint operator in $\calH$, then 
$E_H(\Lambda)=\1_{\Lambda}(H)$ is the spectral projection of $H$ associated to the set $\Lambda\subset\bbR$.
Here and in what follows $\1_\Lambda$ is the characteristic function of the set $\Lambda$.  
We denote by $\calH^{\ac}(H)$ (resp. $\calH^{(\sing)}(H)$) the absolutely continuous 
(resp. singular) subspace of $H$, and $H^\ac=H|_{\calH^\ac(H)}$.

We will often deal with weakly convergent sequences of bounded operators in a Hilbert space, i.e. 
$(A_nx,y)\to(Ax,y)$ for all elements $x,y$ in the Hilbert space. 
Recall that this is equivalent to $\Tr(A_nB)\to\Tr(AB)$ for all trace class operators $B$. 
Thus, for the sake of uniformity with other types of convergences in function spaces, we shall call this 
$*$-weak convergence in the set of bounded operators. 

The set of bounded operators acting from $\calH$ to $\calK$ is denoted by $\calB(\calH,\calK)$, 
and the corresponding norm is denoted by $\norm{\cdot}_\calB$. 
We use   the class of compact operators $\Sch_\infty(\calH,\calK)$ acting from $\calH$ to $\calK$
and, for $0<p<\infty$, the Schatten class $\Sch_p(\calH,\calK)\subset \Sch_\infty(\calH,\calK)$, defined by 
$$
\norm{A}_p^p=\sum_{n=1}^\infty s_n(A)^p<\infty,
$$
where $\{s_n(A)\}_{n=1}^\infty$ is the sequence of singular values of $A$, enumerated
with multiplicities taken into account. 
Observe that $\norm{\cdot}_p$ is a norm for $p\geq1$, and a quasinorm for $0<p<1$;  the triangle
inequality fails in the latter case.  
However, for $0<p<1$ there is a useful substitute for the triangle inequality due to Rotfeld \cite{Rotfeld} (see also \cite{McCarthy})
\[
\norm{A+B}_p^p
\leq 
\norm{A}_p^p+\norm{B}_p^p, \quad 0<p<1.
\label{b7aa}
\]
We frequently use the  ``H\"older inequality for $\Sch_p$ classes''
$$
\norm{AB}_p\leq \norm{A}_q\norm{B}_r, \quad \tfrac1p=\tfrac1q+\tfrac1r.  
$$

\subsection*{Acknowledgements.} Partial support by U.S. National Science Foundation DMS-1363432 (R.L.F.) is acknowledged.
We are grateful to Barry Simon for discussions related to the proof of Theorem~\ref{thm.b5}. 
A.P. is grateful to Caltech for hospitality.

\section{Kato smoothness}\label{sec.b}

Let $H$ be a self-adjoint operator in $\calH$, and let 
$G:\calH\to\calK$ be an $H$-bounded operator; 
that is, $\Dom H\subset \Dom G$ and the operator 
$GR(z)$ is bounded for all $\Im z\not=0$; 
here and in what follows we denote $R(z)=(H-z)^{-1}$. 

Note that the operator $G$ is not assumed to be closed or closable; 
in fact, in one of our examples $G$ will not admit closure. 
So the stand-alone adjoint $G^*$ is not necessarily well defined, but 
products of the type $(GR(z))^*$ are.

\subsection{Kato smoothness}\label{sec.b1}

We recall (see e.g. \cite[Section~4.3]{Yafaev}) that for an $H$-bounded operator  
$G$,  the following conditions are equivalent:
\[
c_1
:=
(2\pi)^{-2} 
\sup_{\eps>0, \norm{u}=1}
\int_{\bbR}(\norm{GR(x+i\eps)u}_\calK^2+\norm{GR(x-i\eps)u}_\calK^2)dx<\infty;
\label{b1}
\]

\[
c_2
:=
\sup_{\eps>0, \norm{u}=1}\frac{\eps^2}{\pi^2}
\int_{\bbR}(\norm{GR(x+i\eps)R(x-i\eps)u}_\calK^2 dx<\infty;
\label{b2}
\]
\[
c_3
:=
\sup_{(a,b)\subset\bbR}
\frac{{\norm{GE_{H}(a,b)}}_\calB^2}{\abs{b-a}}<\infty.
\label{b3}
\]
If these conditions hold true, then 
$$
c_1=c_2=c_3.
$$
In this case, the operator $G$ is 
called $H$-smooth, and we will write $G\in\Smooth(H)$. 
We will denote 
$$
\norm{G}_{\Smooth(H)}:=\sqrt{c_1}=\sqrt{c_2}=\sqrt{c_3}.
$$
We recall that for $G\in\Smooth(H)$, one has $G|_{\calH^{(\sing)}(H)}=0$; 
here $\calH^{(\sing)}(H)$ is the singular subspace of $H$. 

As mentioned in the Introduction, 
we will need 
a slightly non-standard equivalent definition of smoothness, given by the following theorem. 

\begin{theorem}\label{thm.b1}
$G\in\Smooth(H)$ if and only if
\[
\norm{G\varphi(H)}_\calB\leq C\norm{\varphi}_{L^2}, \quad \forall \varphi\in L^2(\bbR).
\label{b4}
\]
Further, in this case  the norm $\norm{G}_{\Smooth(H)}$ coincides with the optimal 
constant in \eqref{b4}:
$$
\norm{G}_{\Smooth(H)}=\sup_{\norm{\varphi}_{L^2}=1}\norm{G\varphi(H)}.
$$
\end{theorem}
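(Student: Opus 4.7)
The easy direction (\eqref{b4} $\Rightarrow$ $G \in \Smooth(H)$) I obtain by testing \eqref{b4} on the $L^2$-normalized indicator $\varphi = \1_{(a,b)}/\sqrt{b-a}$: the hypothesis $\norm{G\varphi(H)}_\calB \leq C$ rearranges to $\norm{GE_H(a,b)}_\calB \leq C\sqrt{b-a}$, which is precisely condition \eqref{b3} with $\sqrt{c_3} \leq C$. In particular this already identifies $\norm{G}_{\Smooth(H)}$ as a lower bound for the optimal $C$ in \eqref{b4}.

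For the converse, my strategy is to combine the equivalent $c_2$-formulation of Kato smoothness with a Poisson-kernel representation of $\varphi(H)$. Writing $P_\eps(t):=(\eps/\pi)/(t^2+\eps^2)$ for the Poisson kernel, the resolvent identity $\frac{\eps}{\pi}R(x+i\eps)R(x-i\eps)=P_\eps(H-x)$ converts \eqref{b2} into the Plancherel-type integral bound
\begin{equation*}
\int_\bbR \norm{GP_\eps(H-x)u}_\calK^2\,dx \leq \norm{G}_{\Smooth(H)}^2\,\norm{u}_\calH^2, \qquad u\in\calH,\ \eps>0.
\end{equation*}
For $\varphi\in L^2(\bbR)\cap L^\infty(\bbR)$, the Poisson extension $\varphi_\eps(\lambda):=\int P_\eps(\lambda-x)\varphi(x)\,dx$ is bounded, so $\varphi_\eps(H)u$ is well defined, and a Fubini-plus-spectral-theorem computation yields the vector-valued representation
\begin{equation*}
\varphi_\eps(H)u=\int_\bbR \varphi(x)\,P_\eps(H-x)u\,dx.
\end{equation*}
Since $GP_\eps(H-x)=GR(x+i\eps)\cdot(\eps/\pi)R(x-i\eps)$ is bounded by $H$-boundedness of $G$, I may exchange $G$ with the integral; Cauchy--Schwarz together with the integral bound above then gives
\begin{equation*}
\norm{G\varphi_\eps(H)u}_\calK\leq \norm{\varphi}_{L^2}\,\norm{G}_{\Smooth(H)}\,\norm{u}_\calH.
\end{equation*}

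It remains to pass to the limit $\eps\downarrow0$. The inequality $\norm{GE_H(\Delta)}^2\leq c_3|\Delta|$ already forces $G$ to annihilate the singular subspace $\calH^{(\sing)}(H)$, so I may reduce to the absolutely continuous part, on which $\varphi_\eps(H)u\to\varphi(H)u$ strongly (via dominated convergence in the spectral representation, using $\varphi_\eps\to\varphi$ pointwise a.e.\ with uniform $L^\infty$-bound). The uniform bound on $G\varphi_\eps(H)u$ then yields a weak cluster point in $\calK$ that agrees with the natural bounded extension of $G\varphi(H)u$; a truncation argument finally extends the estimate from $L^2\cap L^\infty$ to all of $L^2$. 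The main technical obstacle is this last identification of the weak limit, since $G$ is not assumed closable: my resolution is to define $G\varphi(H)$ by continuous extension from the bounded-$\varphi$ subclass (where $G\varphi(H)=GR(i)\cdot(H-i)\varphi(H)$ is manifestly bounded) and to verify consistency with the weak limit via the uniform bound just established.
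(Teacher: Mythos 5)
Your proof takes a genuinely different route from the paper. For the hard implication $G\in\Smooth(H)\Rightarrow\eqref{b4}$, the paper works with the $c_3$-formulation and simple functions: for $\varphi=\sum_k\varphi_k\1_{\Lambda_k}$ with disjoint intervals it computes $G\varphi(H)(G\varphi(H))^*=\sum_k|\varphi_k|^2\,GE_H(\Lambda_k)(GE_H(\Lambda_k))^*$ and applies the triangle inequality for the operator norm, so the whole argument is a few lines of algebra plus density. You instead go through the $c_2$-formulation and the Poisson approximation $\varphi_\eps=P_\eps*\varphi$, then pass to the limit $\eps\downarrow0$. This is precisely the technique the paper deploys for Theorem~2.2 (the vector-valued square-function estimate, of which Theorem~\ref{thm.b1} is the one-term case), so your strategy is sound but heavier than what this particular statement requires. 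The easy direction, and the extraction of the equality $\norm{G}_{\Smooth(H)}=\sup_{\norm{\varphi}_{L^2}=1}\norm{G\varphi(H)}$, are the same as the paper's.

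There is, however, a genuine gap at the step where you claim that "since $GP_\eps(H-x)$ is bounded [for each $x$], I may exchange $G$ with the integral." Boundedness of $GP_\eps(H-x)$ for each fixed $x$ does not by itself license $G\int\varphi(x)P_\eps(H-x)u\,dx=\int\varphi(x)GP_\eps(H-x)u\,dx$: the standard criterion for commuting an operator with a Bochner integral is closedness, and the setup explicitly disallows even closability of $G$. The repair is the same factoring trick you reserve for the final identification: restrict first to $u\in\Dom(H)$, observe that then $\varphi_\eps(H)u\in\Dom(H)=\Dom(G)$, and write
\begin{equation*}
G\varphi_\eps(H)u=\bigl(GR(i)\bigr)\varphi_\eps(H)(H-i)u
=\int\varphi(x)\,GR(i)P_\eps(H-x)(H-i)u\,dx
=\int\varphi(x)\,GP_\eps(H-x)u\,dx,
\end{equation*}
since the bounded operator $GR(i)$ does commute with Bochner integration and $R(i)$ commutes with $P_\eps(H-x)$. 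After this correction your Cauchy--Schwarz and $c_2$ computation go through on the dense set $\Dom(H)$. The remaining identification of the weak $\eps\to0$ limit with $G\varphi(H)u$ on a dense set of $u$ (the paper's $L^\infty_\comp(H)$ would do) should be stated more precisely than "a truncation argument," because that identification is exactly where non-closability of $G$ could still bite; the essential idea is nevertheless present in your last paragraph, and the overall plan is workable.
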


Before proving this theorem, we need to address a minor technical issue: 
since the operator $\varphi(H)$ is in general unbounded,  the definition of 
$G\varphi(H)$ must be made more precise. 
We define $G\varphi(H)$ to be zero on $\calH^{(\sing)}(H)$. 
Next, we will denote by $L^\infty_\comp(H)$ the set of all elements $u\in\calH^{\ac}(H)$ 
for which the function
$$
\frac{d(E_H(-\infty,\lambda)u,u)}{d\lambda}, \quad\lambda\in\bbR
$$
is compactly supported and uniformly bounded on $\bbR$.
It is not difficult to show that $L^\infty_\comp(H)$ is dense in $\calH^{\ac}(H)$. 
It is also easy to see that for $u\in L^\infty_\comp(H)$, 
the element $\varphi(H)u$ is defined for $\varphi\in L^2_\loc(\bbR)$ and 
we have $\varphi(H)u\in\Dom(H)$. 
Thus, $G\varphi(H)u$ is well defined 
for $u\in L^\infty_\comp(H)$. 
Theorem~\ref{thm.b1} says that this definition can be extended to all $u\in\calH^{\ac}(H)$ 
with the norm bound 
\eqref{b4} if and only if $G\in\Smooth(H)$.

\begin{proof}[Proof of Theorem~\ref{thm.b1}] 
Assume that $G\in\Smooth(H)$; let us prove \eqref{b4}. It suffices to consider the 
dense set of functions $\varphi$ of the form
$$
\varphi=\sum_k\varphi_k  \1_{\Lambda_k},
$$
where the sum is finite, $\Lambda_k$ are disjoint intervals in $\bbR$ and $\varphi_k\in\bbC$. 
Then, by \eqref{b3}, 
\begin{multline*}
\norm{G\varphi(H)}_\calB^2
=
\norm{G\varphi(H)(G\varphi(H))^*}_\calB
\leq
\sum_k \abs{\varphi_k}^2\norm{GE_H(\Lambda_k)(GE_H(\Lambda_k))^*}_\calB
\\
\leq
\norm{G}^2_{\Smooth(H)}
\sum_k \abs{\Lambda_k}\abs{\varphi_k}^2
=
\norm{G}^2_{\Smooth(H)}\norm{\varphi}_{L^2}^2,
\end{multline*}
and so we obtain \eqref{b4} with $C=\norm{G}_{\Smooth(H)}$.
The converse follows by taking $\varphi=\1_{(a,b)}$ and by comparing with \eqref{b3}.
\end{proof}

An important ingredient of our construction is 

\begin{theorem}\label{thm.b2}
Let $G\in\Smooth(H)$ and 
let $\{\psi_n\}_{n=1}^\infty$ be an orthonormal sequence in $L^2(\bbR)$. 
Then for any $u\in\calH$: 
\[
\sum_{n=1}^\infty\norm{G\psi_n(H)u}^2_\calK
\leq 
\norm{G}_{\Smooth(H)}^2\norm{u}^2_\calH.
\label{b6}
\]
\end{theorem}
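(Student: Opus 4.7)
The strategy is to approximate each $\psi_n$ by step functions adapted to a partition of $\bbR$, combine the smoothness bound \eqref{b3} on each interval with a Hilbert--Schmidt factorisation, and then pass to the limit. Since $G\varphi(H)$ is defined to vanish on $\calH^{(\sing)}(H)$ and $L^\infty_\comp(H)$ is dense in $\calH^{\ac}(H)$, it suffices to prove the claim for $u\in L^\infty_\comp(H)$; and since the series is nondecreasing under truncation, it is enough to prove the bound with $\sum_{n=1}^N$ for arbitrary finite $N$ and then let $N\to\infty$.

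Fix a partition $\calP=\{\Lambda_k\}_{k=1}^K$ of $\bbR$ into bounded disjoint intervals and set $e_k:=|\Lambda_k|^{-1/2}\1_{\Lambda_k}$; these form an orthonormal family in $L^2(\bbR)$ spanning the $\calP$-adapted step functions, onto which the orthogonal projection in $L^2(\bbR)$ is denoted $Q_\calP$. For $u\in L^\infty_\comp(H)$ the vectors
\[
v_k:=Ge_k(H)u=|\Lambda_k|^{-1/2}GE_H(\Lambda_k)u\in\calK
\]
are well defined, because $E_H(\Lambda_k)u\in\Dom H\subset\Dom G$ for bounded $\Lambda_k$. Introduce the auxiliary operators
\[
S:\bbC^K\to\calK,\ \ S\beta:=\sum_k\beta_k v_k,\qquad C:L^2(\bbR)\to\bbC^K,\ \ C\varphi:=\bigl((\varphi,e_k)_{L^2}\bigr)_{k=1}^K,
\]
so that, expanding $Q_\calP\psi_n=\sum_k(\psi_n,e_k)e_k$, one reads off the identity $G(Q_\calP\psi_n)(H)u=SC\psi_n$.

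Two basic estimates then do the work. First, \eqref{b3} gives $\norm{GE_H(\Lambda_k)}_\calB^2\leq\norm{G}_{\Smooth(H)}^2|\Lambda_k|$, hence $\norm{v_k}_\calK^2\leq\norm{G}_{\Smooth(H)}^2\norm{E_H(\Lambda_k)u}_\calH^2$, and summing over the pairwise disjoint $\Lambda_k$ yields $\sum_k\norm{v_k}_\calK^2\leq\norm{G}_{\Smooth(H)}^2\norm{u}_\calH^2$. This is precisely the squared Hilbert--Schmidt norm of $S$, so $\norm{S}_2\leq\norm{G}_{\Smooth(H)}\norm{u}_\calH$. Second, Bessel's inequality applied to $\{e_k\}$ gives $\norm{C}_\calB\leq 1$. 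Since the Hilbert--Schmidt class is a two-sided ideal in $\calB$, the composition $SC:L^2(\bbR)\to\calK$ is Hilbert--Schmidt with $\norm{SC}_2\leq\norm{S}_2\norm{C}_\calB\leq\norm{G}_{\Smooth(H)}\norm{u}_\calH$. The standard Bessel-type inequality for the Hilbert--Schmidt norm, applied to the (not necessarily complete) orthonormal system $\{\psi_n\}\subset L^2(\bbR)$, then produces
\[
\sum_{n=1}^N\norm{G(Q_\calP\psi_n)(H)u}_\calK^2=\sum_{n=1}^N\norm{SC\psi_n}_\calK^2\leq\norm{SC}_2^2\leq\norm{G}_{\Smooth(H)}^2\norm{u}_\calH^2.
\]

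To finish, refine $\calP$ along a sequence $\calP_j$ so that $Q_{\calP_j}\psi_n\to\psi_n$ in $L^2(\bbR)$ for each $n=1,\dots,N$. Theorem~\ref{thm.b1} then gives
\[
\norm{G(\psi_n-Q_{\calP_j}\psi_n)(H)u}_\calK\leq\norm{G}_{\Smooth(H)}\,\norm{\psi_n-Q_{\calP_j}\psi_n}_{L^2}\,\norm{u}_\calH\longrightarrow 0,
\]
so the finite sum passes to the limit, and letting $N\to\infty$ completes the proof. The main obstacle is setting up the Hilbert--Schmidt factorisation $G(Q_\calP\psi_n)(H)u=SC\psi_n$, which is what converts the \emph{local} interval-wise smoothness estimate \eqref{b3} (packaged as $\norm{S}_2\leq\norm{G}_{\Smooth(H)}\norm{u}_\calH$) into a \emph{global} Hilbert--Schmidt bound on $SC$; orthonormality of $\{\psi_n\}$ then enters only at the very end through the inequality $\sum\norm{SC\psi_n}^2\leq\norm{SC}_2^2$.
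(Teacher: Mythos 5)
Your proof is correct, and it takes a genuinely different route from the paper's. Both proofs share the same high-level architecture --- regularize $\psi_n$, establish a Bessel-type $\ell^2$-bound for the regularized family, then remove the regularization via the $L^2\to\calB$ continuity from Theorem~\ref{thm.b1} --- but the middle step is executed differently. The paper smooths with the Poisson kernel: it sets $F_\eps(x)=GP_\eps(H-x)u$, invokes the resolvent characterisation \eqref{b2} of smoothness to get $F_\eps\in L^2(\bbR;\calK)$ with the right norm, and then applies Bessel's inequality directly in $L^2(\bbR;\calK)$ to the orthonormal family $\{\overline{\psi_n}v_n\}$; this identifies $\int\psi_nF_\eps\,dx$ with $G(\psi_n*P_\eps)(H)u$. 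You instead project onto a step-function subspace, use the interval characterisation \eqref{b3}, and package the estimate as a Hilbert--Schmidt bound $\norm{SC}_2\le\norm{G}_{\Smooth(H)}\norm{u}$, then apply Bessel for the HS norm. The decisive technical device in your version is the idempotency trick $GE_H(\Lambda_k)u=\bigl(GE_H(\Lambda_k)\bigr)\bigl(E_H(\Lambda_k)u\bigr)$, which upgrades the operator-norm bound $\norm{GE_H(\Lambda_k)}_\calB^2\le\norm{G}_{\Smooth(H)}^2|\Lambda_k|$ to the localised estimate $\norm{v_k}\le\norm{G}_{\Smooth(H)}\norm{E_H(\Lambda_k)u}$, whose squares sum over the disjoint intervals; without it the interval-wise bounds would only give the useless $\sum_k\norm{v_k}^2\le K\norm{G}^2_{\Smooth(H)}\norm{u}^2$. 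Your approach is arguably the more elementary one (it needs only spectral projections and orthogonality, no Poisson kernel or resolvent analysis), while the paper's exploits the resolvent formulation \eqref{b2} that sits more naturally in the scattering-theoretic context of the rest of the paper. Two small cosmetic points: finitely many bounded intervals cannot literally partition $\bbR$, so read $\calP$ as a finite disjoint family whose union grows as you refine; and the reduction to $u\in L^\infty_\comp(H)$ is harmless but actually unnecessary, since $e_k(H)u=|\Lambda_k|^{-1/2}E_H(\Lambda_k)u\in\Dom(H)\subset\Dom(G)$ already for all $u\in\calH$ because $\Lambda_k$ is bounded.
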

\begin{proof}
Denote by $P_\eps$ the Poisson kernel, 
\[
P_\eps(x)=\frac{\eps}{\pi(x^2+\eps^2)}, \quad x\in\bbR,\quad \eps>0.
\label{b6aa}
\]
For $\eps>0$, let $F_\eps(x)=GP_\eps(H-x)u$, $x\in\bbR$. 
Then by \eqref{b2}, $F_\eps\in L^2(\bbR; \calK)$ with the norm estimate
$$
\int_\bbR\norm{F_\eps(x)}_\calK^2 dx\leq \norm{G}^2_{\Smooth(H)}\norm{u}_\calH^2. 
$$
Let $N\in\bbN$ and let $v_1,\dots,v_N\in\calK$ be any set of elements with 
$\norm{v_n}=1$ for each $n$. Then the set $\{\overline{\psi_n(x)}v_n\}_{n=1}^N$
is orthonormal in the space $L^2(\bbR;\calK)$, and therefore, by the Cauchy-Schwarz
in the same space, 
$$
\sum_{n=1}^N\Abs{\biggl(v_n,\int_\bbR \psi_n(x)F_\eps(x)dx\biggr)_\calK}^2
=
\sum_{n=1}^N\Abs{(\overline{\psi_n}v_n, F_\eps)_{L^2(\bbR;\calK)}}^2
\leq
\int_\bbR\norm{F_\eps(x)}_\calK^2 dx.
$$
Choosing 
$$
v_n=c_n\int_\bbR\psi_n(x) F_\eps(x)dx
$$
with a suitable normalisation constant $c_n$, from here we obtain 
\[
\sum_{n=1}^N
\Norm{\int_\bbR \psi_n(x) F_\eps(x)dx}_\calK^2
\leq 
\int_\bbR\norm{F_\eps(x)}_\calK^2 dx
\leq
\norm{G}^2_{\Smooth(H)}\norm{u}_\calH^2
\label{b6a}
\]
for every $N\in\bbN$.
Next, for every $n\geq1$, we have
$$
\int_\bbR \psi_n(x) F_\eps(x) dx 
=
\int_\bbR \psi_n(x) GP_\eps(H-x)u\, dx
=
G\psi_n^{(\eps)}(H)u,
$$
where 
$$
\psi_n^{(\eps)}(x)=\int_\bbR \psi_n(t)P_\eps(x-t)dt.
$$
Thus, \eqref{b6a} can be written as
$$
\sum_{n=1}^N\norm{G\psi_n^{(\eps)}(H)u}_\calK^2
\leq
\norm{G}^2_{\Smooth(H)}\norm{u}_\calH^2.
$$
Further, by the properties of the Poisson kernel, $\norm{\psi_n^{(\eps)}-\psi_n}_{L^2}\to0$
as $\eps\to0$ for all $n$, and therefore, by Theorem~\ref{thm.b1}, 
$$
\norm{G\psi_n^{(\eps)}(H)u-G\psi_n(H)u}_\calK\to0, \quad \eps\to0
$$
for all $n$. 
It follows that for any $N$, 
$$
\sum_{n=1}^N\norm{G\psi_n(H)u}_\calK^2
\leq
\norm{G}^2_{\Smooth(H)}\norm{u}_\calH^2.
$$
Since $N$ is arbitrary, we obtain \eqref{b6}.
\end{proof}

\subsection{The class $\Smooth_\infty(H)$}

We will write $G\in\Smooth_\infty(H)$, if $G\in \Smooth(H)$ and if 
\[
GE_H(-R,R)\in\Sch_\infty\quad\forall R>0.
\label{b10}
\]
\begin{lemma}\label{lma.b3}
Let $G\in\Smooth_\infty(H)$; then $G\varphi(H)$ is compact for any $\varphi\in L^2(\bbR)$. 
\end{lemma}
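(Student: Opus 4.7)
The plan is to reduce the general $\varphi \in L^2(\bbR)$ to the case of bounded, compactly supported $\varphi$ by an approximation argument, using Theorem~\ref{thm.b1} to control the resulting error in operator norm. The hypothesis \eqref{b10} will then give compactness in the truncated case.

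First I would define the truncations $\varphi_n(x) = \varphi(x)\,\1_{\{|x|\leq n,\,|\varphi(x)|\leq n\}}(x)$ for $n\in\bbN$. By construction $\varphi_n\in L^\infty(\bbR)$ with compact support in $[-n,n]$, and by dominated convergence $\varphi_n\to\varphi$ in $L^2(\bbR)$. For such $\varphi_n$, the operator $\varphi_n(H)$ is bounded on $\calH$ by the functional calculus, and its range lies in $\Ran E_H(-n,n)\subset\Dom(H)\subset\Dom(G)$. Hence on each $u\in\calH$ we may write
\[
G\varphi_n(H)u = G\,E_H(-n,n)\,\varphi_n(H)u,
\]
and since $\varphi_n(H)$ vanishes outside $[-n,n]$, this equals $GE_H(-n,n)\varphi_n(H)u$ as a composition of a bounded operator $\varphi_n(H)$ with the compact operator $GE_H(-n,n)$ from the hypothesis $G\in\Smooth_\infty(H)$. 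Therefore $G\varphi_n(H)\in\Sch_\infty$.

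Next I would apply Theorem~\ref{thm.b1} to $\varphi-\varphi_n\in L^2(\bbR)$ to get
\[
\norm{G\varphi(H)-G\varphi_n(H)}_\calB \leq \norm{G}_{\Smooth(H)}\norm{\varphi-\varphi_n}_{L^2}\longrightarrow 0.
\]
Since the set of compact operators is closed in the operator norm topology, it follows that $G\varphi(H)\in\Sch_\infty$, which is the claim.

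There is no substantive obstacle; the only point requiring mild care is the consistency of the definition of $G\varphi(H)$ (extended by zero on $\calH^{(\sing)}(H)$) with the formula $G\varphi_n(H)=GE_H(-n,n)\varphi_n(H)$ used above. This is automatic because $\varphi_n(H)$ preserves the decomposition $\calH=\calH^{\ac}(H)\oplus\calH^{(\sing)}(H)$ and $G$ vanishes on $\calH^{(\sing)}(H)\cap\Dom(G)$, so both definitions agree on the singular subspace as well.
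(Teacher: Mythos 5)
Your proof is correct and follows essentially the same route as the paper: truncate $\varphi$ to a bounded compactly supported function, use $G\in\Smooth_\infty(H)$ to get compactness of $G\varphi_n(H)$ via $GE_H(-n,n)$, and then pass to the limit in operator norm using Theorem~\ref{thm.b1}. The paper's proof is just a more compressed version of the same density argument.
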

\begin{proof}
Since $GE_{H}(-R,R)$ is compact for any $R>0$, the operator $G\varphi(H_0)$ is compact for $\varphi\in L^\infty_{\comp}(\bbR)$. 
Since $L^\infty_{\comp}$ is dense in $L^2$, the bound \eqref{b4} implies that $G\varphi(H_0)$ is compact for all $\varphi\in L^2$, as claimed.
\end{proof}

\subsection{Smoothness with respect to the multiplication operator}\label{sec.b2}
It will be important for us to have a description of the class $\Smooth(\calM)$,
where $\calM$ is the operator of multiplication by the independent variable in a vector-valued 
$L^2$-space. 
Such description was given by Kato in \cite{Kato2}. 
Let $\mathfrak h$ be an auxiliary Hilbert space (which may be finite or infinite dimensional),
and let $\calH=L^2(\bbR;\mathfrak h)$ be the $L^2$ space of $\mathfrak h$-valued functions. 
The operator $\calM$ in $\calH$ is defined as
\[
(\calM f)(x)=xf(x), \quad f\in \Dom \calM,
\label{b6c}
\]
$$
\Dom \calM=\left\{f\in L^2(\bbR;\mathfrak h): \int_{\bbR} \norm{f(x)}_{\mathfrak h}^2(x^2+1)dx<\infty\right\}.
$$

\begin{theorem}\cite{Kato2}\label{thm.b3}
Let $\calM$ be as above and let $G:\calH\to \calK$ be an $\calM$-bounded operator. 
Then $G\in \Smooth(\calM)$ if and only if $G$ can be represented as
\begin{equation}
Gf=\int_\bbR g(x) f(x)dx, \quad
\forall 
f\in \Dom \calM,
\label{e24}
\end{equation}
with some $g\in L^\infty(\bbR;\calB(\mathfrak h,\calK))$. 
Moreover, in this case we have the equality of the norms
\begin{equation}
\norm{G}_{\Smooth(\calM)}=\norm{g}_{L^\infty(\bbR;\calB(\mathfrak h,\calK))}.
\label{e25}
\end{equation}
\end{theorem}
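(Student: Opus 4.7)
The plan is to prove the two implications separately.

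For the sufficiency direction, assume the representation \eqref{e24} with $g \in L^\infty(\bbR; \calB(\mathfrak h, \calK))$. For $\varphi \in L^2(\bbR)$ and $u \in L^\infty_\comp(\calM)$, the element $\varphi(\calM) u \in \Dom \calM$ is just the pointwise product $x \mapsto \varphi(x) u(x)$, so a direct Cauchy--Schwarz estimate gives
$$
\norm{G\varphi(\calM) u}_\calK \leq \norm{g}_{L^\infty} \int_\bbR \abs{\varphi(x)} \norm{u(x)}_{\mathfrak h} \, dx \leq \norm{g}_{L^\infty} \norm{\varphi}_{L^2} \norm{u}_{\calH}.
$$
Theorem~\ref{thm.b1} then yields $G \in \Smooth(\calM)$ with $\norm{G}_{\Smooth(\calM)} \leq \norm{g}_{L^\infty}$.

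For the necessity direction, I would extract $g(x)$ pointwise a.e.\ via Lebesgue differentiation. Fix $v \in \mathfrak h$ and $w \in \calK$. Since $\norm{G(\varphi v)}_\calK \leq \norm{G}_{\Smooth(\calM)} \norm{\varphi}_{L^2} \norm{v}_{\mathfrak h}$ by Theorem~\ref{thm.b1}, the linear functional $\varphi \mapsto (G(\varphi v), w)_\calK$ on $L^2(\bbR)$ is bounded by $\norm{G}_{\Smooth(\calM)} \norm{v}_{\mathfrak h} \norm{w}_\calK$, and Riesz representation produces $h_{v,w} \in L^2(\bbR)$ with $(G(\varphi v), w)_\calK = \int_\bbR \varphi(x) h_{v,w}(x) \, dx$. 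To upgrade from this $L^2$ bound on $h_{v,w}$ to an $L^\infty$ bound, I would use the interval characterization \eqref{b3}: setting $\varphi = \1_{(a,b)}$,
$$
\left| \int_a^b h_{v,w}(x) \, dx \right| = \abs{(G(\1_{(a,b)} v), w)_\calK} \leq \norm{GE_\calM(a,b)}_\calB \sqrt{b-a}\, \norm{v}_{\mathfrak h} \norm{w}_\calK \leq \norm{G}_{\Smooth(\calM)} (b-a) \norm{v}_{\mathfrak h} \norm{w}_\calK.
$$
Dividing by $b - a$ and applying Lebesgue differentiation gives $\abs{h_{v,w}(x_0)} \leq \norm{G}_{\Smooth(\calM)} \norm{v}_{\mathfrak h} \norm{w}_\calK$ for a.e.\ $x_0$. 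Using separability of $\mathfrak h$ and $\calK$, the exceptional null set can be arranged simultaneously for $(v, w)$ ranging over a countable dense subset, after which the sesquilinear form $(v, w) \mapsto h_{v,w}(x_0)$ extends by continuity to $\mathfrak h \times \calK$ and determines an operator $g(x_0) \in \calB(\mathfrak h, \calK)$ with $\norm{g(x_0)}_{\calB(\mathfrak h, \calK)} \leq \norm{G}_{\Smooth(\calM)}$ a.e. The representation \eqref{e24} is then checked first for elementary $f = \varphi v$ (where it holds by construction of $h_{v,w}$), and extended by linearity to simple functions and density to $\Dom \calM$; combining this with the sufficiency bound yields the norm equality \eqref{e25}.

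The main obstacle is precisely this $L^2$-to-$L^\infty$ upgrade: an $L^2$ function carries no pointwise information, so one cannot get the operator-valued essential bound from Riesz representation alone. The extra input is the full strength of the interval characterization \eqref{b3}, which controls $h_{v,w}$ on every interval at the scale of Lebesgue measure rather than merely on average in $L^2$. A secondary technical point is the simultaneous handling of the null sets for all pairs $(v, w)$, which is resolved by separability together with density of bounded sesquilinear forms.
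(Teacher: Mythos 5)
Your sufficiency direction is fine (it is a harmless variant of the paper's, using Theorem~\ref{thm.b1} in place of the interval characterization \eqref{b3}). The necessity direction, however, contains a genuine error at its foundation: the claimed bound $\norm{G(\varphi v)}_\calK \leq \norm{G}_{\Smooth(\calM)}\norm{\varphi}_{L^2}\norm{v}_{\mathfrak h}$ is false, and Theorem~\ref{thm.b1} does not yield it. The function $x\mapsto \varphi(x)v$ is not of the form $\psi(\calM)u$ with $\norm{\psi}_{L^2}\norm{u}_\calH = \norm{\varphi}_{L^2}\norm{v}_{\mathfrak h}$, because the constant section $x\mapsto v$ is not in $L^2(\bbR;\mathfrak h)$. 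The natural factorization $\varphi v = \psi(\calM)u$ with $\abs{\psi}=\abs{\varphi}^{1/2}$, $u=\abs{\varphi}^{1/2}v$ gives $\norm{\psi}_{L^2}\norm{u}_\calH = \norm{\varphi}_{L^1}\norm{v}_{\mathfrak h}$, i.e.\ an $L^1$ bound, not an $L^2$ bound. Example~\ref{exa.b3} is an explicit counterexample: there $G\varphi=\int_\bbR\varphi(x)\,dx$, which is unbounded as a functional on $L^2(\bbR)$. Consequently your Riesz-representation step on $L^2(\bbR)$, which is what produces $h_{v,w}$ in the first place, has no basis; the subsequent Lebesgue-differentiation upgrade is applied to an object whose existence has not been established. (A secondary instance of the same gap: the final extension of \eqref{e24} from elementary tensors to all of $\Dom\calM$ ``by density'' needs a topology in which $f\mapsto Gf$ is continuous, and that again is the $L^1$ bound.)

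The fix is exactly the paper's key step, estimate \eqref{e26}: writing a general $f\in\Dom\calM$ as $f=f_1(\calM)f_2$ with $f_1(x)=\norm{f(x)}_{\mathfrak h}^{1/2}$ and $f_2(x)=\norm{f(x)}_{\mathfrak h}^{-1/2}f(x)$, Theorem~\ref{thm.b1} gives $\norm{Gf}_\calK\leq\norm{G}_{\Smooth(\calM)}\norm{f}_{L^1(\bbR;\mathfrak h)}$, after which the duality $(L^1)^*=L^\infty$ (in the vector-valued form) produces the $L^\infty$ representative directly, with no differentiation argument needed. Alternatively, you could salvage your architecture by dispensing with the $L^2$ step entirely and defining $h_{v,w}$ from your (correct) interval estimate: the set function $(a,b)\mapsto(G(\1_{(a,b)}v),w)_\calK$ is additive and dominated by $\norm{G}_{\Smooth(\calM)}\norm{v}_{\mathfrak h}\norm{w}_\calK\,(b-a)$, hence extends to an absolutely continuous measure whose density is the desired $h_{v,w}\in L^\infty(\bbR)$. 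Either way, some form of the $L^1$/interval control must replace the false $L^2$ boundedness before any representative of the functional can be extracted.
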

This theorem plays a crucial role in our construction; see Theorem~\ref{thm.b5} and Lemma~\ref{lma.b6} below.  
For this reason and for the sake of completeness we give a proof,  which is essentially a rewording of Kato's proof in \cite{Kato2}. 
\begin{proof}
Let $g\in L^\infty(\bbR;\calB(\mathfrak h,\calK))$ and let  $G$ be defined according to \eqref{e24}.
Then it is clear that for every finite interval $\Lambda$ the operator
$GE_\calM(\Lambda)$ is bounded and 
\begin{equation}
GE_\calM(\Lambda)(GE_\calM(\Lambda))^*=\int_{\Lambda}g(x)g(x)^*dx.
\label{e27}
\end{equation}
It follows that 
$$
\norm{GE_\calM(\Lambda)}_{\calB(\calH,\calK)}^2
=
\Norm{\int_{\Lambda}g(x)g(x)^*dx}_{\calB(\calK)}
\leq 
\abs{\Lambda}\sup_x\norm{g(x)}_{\calB(\mathfrak h,\calK)}^2,
$$
and so, by \eqref{b3}, $G\in\Smooth(\calM)$ and 
\[
\norm{G}_{\Smooth(\calM)}\leq \norm{g}_{L^\infty(\bbR;\calB(\mathfrak h,\calK))}.
\label{e27a}
\]

Conversely, let $G\in \Smooth(\calM)$. 
First we need an auxiliary estimate. 
Observe that $\Dom \calM\subset L^1(\bbR; \mathfrak h)$. 
Write every $f\in \Dom \calM$ as $f=f_1(\calM)f_2$, where
$$
f_1(x)=\norm{f(x)}_{\mathfrak h}^{1/2}
\quad\text{ and }\quad 
f_2(x)=\norm{f(x)}_{\mathfrak h}^{-1/2}f(x) 
\quad \text{ for a.e. $x\in \bbR$.}
$$ 
Then 
$f_1\in L^2(\bbR)$, 
$f_2\in L^2(\bbR;\mathfrak h)$ and 
$$
\norm{f_1}_{L^2(\bbR)}^2=\norm{f_2}_{L^2(\bbR;\mathfrak h)}^2=\norm{f}_{L^1(\bbR;\mathfrak h)}. 
$$
By Theorem~\ref{thm.b1}, we obtain
\begin{multline}
\norm{Gf}
=
\norm{Gf_1(\calM)f_2}
\leq
\norm{G}_{\Smooth(\calM)}\norm{f_1}_{L^2(\bbR)}\norm{f_2}_{L^2(\bbR;\mathfrak h)}
\\
=
\norm{G}_{\Smooth(\calM)}\norm{f}_{L^1(\bbR;\mathfrak h)}.
\label{e26}
\end{multline}
Now
let us establish the existence of $g\in L^\infty(\bbR;\calB(\mathfrak h,\calK))$
that satisfies \eqref{e24}.
In order to define the function $g(x)$, it is easier to start with the adjoint $g(x)^*$. 
Let $\psi\in\calK$; by \eqref{e26}, we have
$$
\abs{(Gf,\psi)}
\leq
\norm{G}_{\Smooth(\calM)}\norm{\psi}_{\calK}\norm{f}_{L^1(\bbR;\mathfrak h)}. 
$$
It follows that the linear functional $f\mapsto (Gf,\psi)$ 
is bounded on $L^1(\bbR;\mathfrak h)$ and therefore 
(see e.g. \cite[Corollary 1.3.22]{HNVW}) 
it can be represented as
\begin{equation}
(Gf,\psi)=\int_\bbR(f(x),g_\psi(x))_{\mathfrak h} dx,
\label{e28}
\end{equation}
with some $g_\psi\in L^\infty(\bbR;\mathfrak h)$ satisfying
\begin{equation}
\norm{g_\psi}_{L^\infty(\bbR;\mathfrak h)}
\leq
\norm{G}_{\Smooth(\calM)}\norm{\psi}_{\calK}. 
\label{e28a}
\end{equation}
By the uniqueness of this representation, $g_\psi$ depends linearly on $\psi$. 
Now for $x\in\bbR$, let us define the operator $g(x)^*:\calK\to\mathfrak h$ by 
$$
g(x)^*\psi:=g_\psi(x)
$$
(to be precise, this should be done on a suitable countable dense set of $\psi$ and a suitable
set of $x$ of full measure -- we omit these details). 
By \eqref{e28a}, we have 
\begin{equation}
\esssup_x \norm{g^*(x)}_{\calB(\calK,\mathfrak h)}\leq \norm{G}_{\Smooth(\calM)}.
\label{e28b}
\end{equation}
Now we can define
$g(x):\mathfrak h\to\calK$ as the adjoint of $g(x)^*$. 
From \eqref{e28} we obtain that 
$$
\int_{\bbR} (g(x)f(x),\psi)_{\calK}dx
=
(Gf,\psi)_\calK
$$
for all  $f\in \Dom(\calM)$. This yields 
\eqref{e24}.
From \eqref{e27a} and \eqref{e28b} we obtain the equality of the norms \eqref{e25}.
\end{proof}

\begin{example}\label{exa.b3}
Let $\mathfrak h=\calK$ and let $g(x)=I$ for all $x$, i.e.,
$$
Gf=\int_\bbR f(x)dx, \quad f\in\Dom \calM.
$$
Then $G\in\Smooth(\calM)$ and $\norm{G}_{\Smooth(\calM)}=1$. 
It is easy to see that $G$ is not closable. 
\end{example}

\section{$\Sch_p$-valued smoothness}\label{sec.bb}

\subsection{Definition and characterisation}\label{sec.b3}

\begin{definition}\label{def.b4}
For $0<p<\infty$, we write $G\in\Smooth_{p}(H)$, if $G\in\Smooth(H)$ and if
for some $C>0$ and for all $\varphi\in L^2(\bbR)$, 
$$
\norm{G\varphi(H)}_{p}\leq C\norm{\varphi}_{L^2}.
$$
In this case we set
$$
\norm{G}_{\Smooth_{p}(H)}=\sup_{\norm{\varphi}_{L^2}=1}\norm{G\varphi(H)}_{p}.
$$
\end{definition}
\begin{lemma}\label{lma.bb1}
Let $p\geq2$; then 
\[
\norm{G}_{\Smooth_{p}(H)}=\sup_{\Lambda\subset \bbR}\abs{\Lambda}^{-1/2}\norm{GE_H(\Lambda)}_{p},
\quad p\geq2,
\label{b11}
\]
where the supremum is taken over all finite intervals $\Lambda$. 
\end{lemma}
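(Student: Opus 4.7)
The plan is to prove the two directions of the claimed equality. Let $M$ denote the supremum on the right-hand side of \eqref{b11}. The inequality $M\leq \norm{G}_{\Smooth_p(H)}$ is immediate: for any finite interval $\Lambda$, the function $\varphi=\abs{\Lambda}^{-1/2}\1_\Lambda$ satisfies $\norm{\varphi}_{L^2}=1$, so by the definition of $\norm{G}_{\Smooth_p(H)}$,
$$
\abs{\Lambda}^{-1/2}\norm{GE_H(\Lambda)}_p = \norm{G\varphi(H)}_p \leq \norm{G}_{\Smooth_p(H)}.
$$

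For the reverse inequality, I would follow the same strategy as in the proof of Theorem~\ref{thm.b1}, but upgrade the triangle inequality for the operator norm to the Schatten-$p/2$ triangle inequality (which is where $p\geq 2$ is used). Specifically, take a simple function $\varphi=\sum_k\varphi_k\1_{\Lambda_k}$ with disjoint finite intervals $\Lambda_k$. Using that the spectral projections $E_H(\Lambda_k)$ are mutually orthogonal and idempotent,
$$
G\varphi(H)(G\varphi(H))^* = \sum_k \abs{\varphi_k}^2 \bigl(GE_H(\Lambda_k)\bigr)\bigl(GE_H(\Lambda_k)\bigr)^*.
$$
Since $p/2\geq 1$, the $\Sch_{p/2}$ quasinorm is an actual norm, so the triangle inequality yields
$$
\norm{G\varphi(H)}_p^2 = \Norm{G\varphi(H)(G\varphi(H))^*}_{p/2} \leq \sum_k \abs{\varphi_k}^2 \norm{GE_H(\Lambda_k)}_p^2 \leq M^2\sum_k\abs{\varphi_k}^2\abs{\Lambda_k} = M^2\norm{\varphi}_{L^2}^2.
$$

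Finally, I would extend the bound from simple functions to arbitrary $\varphi\in L^2(\bbR)$ by density. Approximating $\varphi$ in $L^2$ by simple functions $\varphi_n$, Theorem~\ref{thm.b1} gives operator-norm (and hence $*$-weak) convergence $G\varphi_n(H)\to G\varphi(H)$, and the Schatten-$p$ quasinorm is lower semicontinuous with respect to $*$-weak convergence. Therefore $\norm{G\varphi(H)}_p\leq \liminf_n\norm{G\varphi_n(H)}_p \leq M\norm{\varphi}_{L^2}$, which gives $\norm{G}_{\Smooth_p(H)}\leq M$.

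The argument is essentially a routine adaptation of the proof of Theorem~\ref{thm.b1}; the only genuine point is the use of $p/2\geq 1$ to obtain the Schatten triangle inequality — this is precisely where the restriction $p\geq 2$ enters and is the step that fails (or would need a replacement) for $p<2$.
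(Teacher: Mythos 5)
Your proof is correct and follows essentially the same route as the paper: reduce to simple functions, use orthogonality of the spectral projections, and apply the $\Sch_{p/2}$ triangle inequality (valid precisely because $p\geq 2$). You are somewhat more careful than the paper in spelling out the density/limiting step (via lower semicontinuity of the Schatten norm under $*$-weak convergence), and you also correctly write $M^2$ where the paper's displayed inequality has a small typo ($A$ in place of $A^2$); otherwise the arguments coincide.
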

\begin{proof}
Denote by $A$ the right hand side of \eqref{b11}. 
The inequality $\norm{G}_{\Smooth_p(H)}\geq A$
follows by taking $\varphi=\1_\Lambda$. The converse inequality follows by the same 
calculation as in the proof of Theorem~\ref{thm.b1}, with Schatten norms instead of the operator norms. 
Indeed, for 
$$
\varphi=\sum_k\varphi_k\1_{\Lambda_k}, 
$$
we have 
\begin{multline*}
\norm{G\varphi(H)}_{\Sch_p}^2
=
\norm{G\varphi(H)(G\varphi(H))^*}_{\Sch_{p/2}}
\leq
\sum_k\abs{\varphi_k}^2\norm{GE_H(\Lambda_k)(GE_H(\Lambda_k))^*}_{\Sch_{p/2}}
\\
=
\sum_k\abs{\varphi_k}^2\norm{GE_H(\Lambda_k)}_{\Sch_{p}}^2
\leq
A\sum_k\abs{\varphi_k}^2\abs{\Lambda_k}
=
A\norm{\varphi}_{L^2}^2,
\end{multline*}
which gives the required bound. 
\end{proof}

For $0<p<2$ the argument of Lemma~\ref{lma.bb1} is no longer valid, as the triangle inequality fails for 
the quasi-norm $\norm{\cdot}_{p/2}$.

For $p\geq2$, $\Sch_{p}$-valued smoothness with respect to the multiplication operator is easy to characterise.
For $0<p<2$, we have only a necessary condition for $\Sch_{p}$-valued smoothness. 

\begin{theorem}\label{thm.b5}
Let $\calM$ be the multiplication operator \eqref{b6c} in $\calH=L^2(\bbR;\mathfrak h)$
and let $G:\calH\to \calK$ be an $\calM$-bounded operator. 
\begin{enumerate}[\rm (i)]
\item
Let $p\geq2$; 
then $G\in \Smooth_{p}(\calM)$  if and only if $G$ can be represented as in \eqref{e24}
with some $g\in L^\infty(\bbR;\Sch_{p}(\mathfrak h,\calK))$. 
Moreover, in this case we have the equality of the norms
$$
\norm{g}_{L^\infty(\bbR;\Sch_{p}(\mathfrak h,\calK))}
=
\norm{G}_{\Smooth_{p}(\calM)}, \quad p\geq2.
$$
\item
Let $0<p<2$. 
If $G\in \Smooth_{p}(\calM)$, then $G$ can be represented as in \eqref{e24}
with some $g\in L^\infty(\bbR;\Sch_{p}(\mathfrak h,\calK))$ and
\begin{equation}
\norm{g}_{L^\infty(\bbR;\Sch_{p}(\mathfrak h,\calK))}
\leq
\norm{G}_{\Smooth_{p}(\calM)}, \quad 0<p<2.
\label{b12b}
\end{equation}
\end{enumerate}
\end{theorem}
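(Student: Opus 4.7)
Since $\Smooth_p(\calM) \subset \Smooth(\calM)$, Theorem~\ref{thm.b3} already provides the representation \eqref{e24} for some $g \in L^\infty(\bbR; \calB(\mathfrak h, \calK))$; only the Schatten refinement remains. The starting point in both parts is the identity
$$
\norm{GE_\calM(\Lambda)}_{\Sch_p}^2
=
\norm{GE_\calM(\Lambda)(GE_\calM(\Lambda))^*}_{\Sch_{p/2}}
=
\Norm{\int_\Lambda g(x)g(x)^* dx}_{\Sch_{p/2}},
$$
obtained from \eqref{e27} together with the general identity $\norm{A}_{\Sch_p}^2 = \norm{AA^*}_{\Sch_{p/2}}$ (valid for all $p>0$ via $s_k(AA^*)=s_k(A)^2$).

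For part (i), $p \geq 2$, the ``$\Leftarrow$'' implication follows from Lemma~\ref{lma.bb1} combined with the ordinary triangle inequality in the Banach space $\Sch_{p/2}$ (valid since $p/2 \geq 1$) and the identity $\norm{g(x)g(x)^*}_{\Sch_{p/2}} = \norm{g(x)}_{\Sch_p}^2$. For the ``$\Rightarrow$'' direction I would view $\Lambda \mapsto GE_\calM(\Lambda)(GE_\calM(\Lambda))^*$ as a countably additive vector measure into $\Sch_{p/2}$ whose variation is bounded by $\norm{G}_{\Smooth_p(\calM)}^2$ times Lebesgue measure. Since $\Sch_{p/2}$ has the Radon--Nikodym property for $p \geq 2$ (reflexive when $p > 2$; for $p = 2$ the trace class $\Sch_1$ is a dual of the separable space $\Sch_\infty$), this measure admits a Radon--Nikodym derivative $h \in L^\infty(\bbR; \Sch_{p/2})$ with $\norm{h}_{L^\infty(\Sch_{p/2})} \leq \norm{G}_{\Smooth_p(\calM)}^2$. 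The identification $h(x) = g(x)g(x)^*$ a.e.\ is obtained by testing against rank-one operators and invoking scalar Lebesgue differentiation of the bounded functions $x \mapsto (g(x)g(x)^* u, v)$.

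The obstacle in part (ii), $0 < p < 2$, is that $\Sch_{p/2}$ is only a quasi-Banach space (the Rotfeld inequality \eqref{b7aa} replaces the triangle inequality), and the Radon--Nikodym property is not readily available. I would bypass this by finite-rank truncation. Fix sequences of finite-rank orthogonal projections $Q_n \uparrow I_\calK$ and $P_n \uparrow I_\mathfrak h$ (strongly), and set $G_n := Q_n G P_n$, which has kernel $g_n(x) = Q_n g(x) P_n$. Since left and right multiplication by contractions does not increase Schatten norms, $\norm{G_n}_{\Smooth_p(\calM)} \leq \norm{G}_{\Smooth_p(\calM)}$. The vector measure $\Lambda \mapsto \int_\Lambda g_n(x)g_n(x)^* dx$ now takes values in the finite-dimensional quasi-Banach space $\Sch_{p/2}(Q_n\calK)$, where Lebesgue differentiation reduces to finitely many scalar Radon--Nikodym decompositions in a fixed basis. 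The part-(i) argument then yields $\norm{g_n(x)}_{\Sch_p} \leq \norm{G}_{\Smooth_p(\calM)}$ outside some null set $\calN_n$.

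Taking $\calN := \bigcup_n \calN_n$, still of measure zero, it remains to pass to the limit $n\to\infty$. For every $x \notin \calN$ and every $k$, the singular values satisfy $s_k(Q_n g(x) P_n) \leq s_k(g(x))$ with the reverse inequality holding in the limit by lower semi-continuity of the operator norm under the strong convergence $Q_n g(x) P_n \to g(x)$; hence $s_k(g_n(x)) \uparrow s_k(g(x))$. Monotone convergence then upgrades the truncated bound to $\norm{g(x)}_{\Sch_p} \leq \norm{G}_{\Smooth_p(\calM)}$ almost everywhere, proving \eqref{b12b}. The principal technical point is thus the execution of Lebesgue differentiation in the quasi-Banach space $\Sch_{p/2}$ when $p < 2$; the finite-rank truncation is the device designed precisely to circumvent it.
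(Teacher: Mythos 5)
Your proof is correct, but it follows a genuinely different route from the paper, and the comparison is instructive. The paper proves the forward implication for all $p>0$ at once by a single argument that never tries to do Lebesgue differentiation in $\Sch_{p/2}$ at all: it differentiates the measure $\Lambda\mapsto\int_\Lambda g(x)g(x)^*\,dx$ in the \emph{operator-norm}-valued sense (Lebesgue differentiation in the Banach space $\calB(\calK)$, which is available independently of $p$), then passes through the continuous functional calculus $t\mapsto t^{p/2}$ to get convergence of $\bigl(\frac1{2\eps}\int_{\lambda-\eps}^{\lambda+\eps}g g^*\bigr)^{p/2}$ in $\calB(\calK)$, and finally applies lower semicontinuity of the trace under $*$-weak convergence (Lemma~\ref{traceconv}) to obtain $\|g(\lambda)\|_p^p\le\liminf\Tr(\cdots)^{p/2}\le\|G\|_{\Smooth_p(\calM)}^p$. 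The difficulty you isolate---that $\Sch_{p/2}$ is only quasi-Banach for $p<2$, so vector-measure Radon--Nikodym arguments in $\Sch_{p/2}$ are unavailable---is thereby sidestepped entirely rather than overcome. Your argument instead splits into two cases: for $p\ge 2$ you invoke the RNP of $\Sch_{p/2}$ (valid: $\Sch_1$ is a separable dual, and $\Sch_{p/2}$ is reflexive for $p>2$), while for $p<2$ you truncate to finite rank, differentiate in the resulting finite-dimensional space, and then pass to the limit via $s_k(Q_n g(x) P_n)\uparrow s_k(g(x))$ (monotonicity since $g_n$ is a compression of $g_{n+1}$ and of $g$, convergence via the min-max characterization on finite-dimensional test subspaces) and scalar monotone convergence. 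Both routes are sound; the paper's is shorter and uniform in $p$, because the only infinite-dimensional differentiation it needs is in $\calB(\calK)$, and it converts to Schatten information only at the very end via the trace. Your truncation device, however, is a perfectly good alternative and makes the quasi-Banach obstruction and its resolution more explicit.
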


After the proof of this theorem we will give an example that shows that for $0<p<2$ an operator $G$ represented as in \eqref{e24} with some $g\in L^\infty(\bbR;\Sch_{p}(\mathfrak h,\calK))$ does not necessarily belong to $\Smooth_{p}(\calM)$, so one cannot expect equality in \eqref{b12b}.

We need the following well-known lemma:
\begin{lemma}\label{traceconv}
Let $\{A_n\}_{n=1}^\infty$ be a sequence of non-negative operators 
which converges $*$-weakly to an operator $A$. Then
$$
\Tr A \leq \liminf_{n\to\infty} \Tr A_n
$$
(with the understanding that the left side is finite if the right side is).
\end{lemma}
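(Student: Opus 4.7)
The plan is to reduce the inequality to a statement about finite partial diagonal sums, exploiting the variational characterization of the trace for non-negative operators:
$$
\Tr B = \sup_{N,\{e_k\}}\sum_{k=1}^N (Be_k,e_k),
$$
where the supremum runs over $N\in\bbN$ and over orthonormal systems $\{e_k\}_{k=1}^N$ in $\calH$, with the convention $\Tr B = +\infty$ when $B$ is non-negative but not trace class. First I would observe that the $*$-weak limit $A$ is itself non-negative: $(Au,u)=\lim_n(A_n u,u)\geq 0$ for every $u\in\calH$, so the above formula applies to $A$.

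Next I would fix an arbitrary finite orthonormal system $\{e_k\}_{k=1}^N$ and consider the finite-rank orthogonal projection $P_N=\sum_{k=1}^N(\,\cdot\,,e_k)e_k$, which is trace class. The definition of $*$-weak convergence recorded in Section~\ref{sec.a6} ($\Tr(A_n B)\to\Tr(AB)$ for all trace class $B$) then yields
$$
\sum_{k=1}^N (Ae_k,e_k)=\Tr(AP_N)=\lim_{n\to\infty}\Tr(A_nP_N)=\lim_{n\to\infty}\sum_{k=1}^N(A_ne_k,e_k).
$$
Because each $A_n$ is non-negative, its partial diagonal sums satisfy $\sum_{k=1}^N(A_ne_k,e_k)\leq \Tr A_n$, and passing to the limit gives
$$
\sum_{k=1}^N(Ae_k,e_k)\leq \liminf_{n\to\infty}\Tr A_n.
$$

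Finally, taking the supremum over $N$ and over orthonormal systems and invoking the variational formula delivers $\Tr A\leq \liminf_n \Tr A_n$, which also automatically handles the finiteness caveat: if $\liminf_n\Tr A_n<\infty$ then every partial sum above is bounded by the same constant, so $\Tr A$ is finite. There is no real obstacle in this argument; the only subtlety is making sure to work with partial sums indexed by \emph{finite} $N$ so that $P_N$ is trace class and the $*$-weak convergence may be applied directly, rather than attempting to pass to the limit inside an infinite sum where uniform control on the tails is not available.
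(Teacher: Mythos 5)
Your proof is correct and uses essentially the same approach as the paper: reduce to finite partial diagonal sums, use $*$-weak convergence to pass to the limit term-by-term, and bound each partial sum of $A_n$ by $\Tr A_n$. The only cosmetic difference is that the paper fixes a single orthonormal basis and invokes monotone convergence as $J\to\infty$, whereas you take the supremum over all finite orthonormal systems via the variational characterization of the trace; these are interchangeable here.
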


\begin{proof}
Let $\{e_j\}_{j=1}^\infty$ be an orthonormal basis of the underlying Hilbert space. 
Then for any $J\in\bbN$,
$$
\sum_{j=1}^J (Ae_j,e_j) 
= 
\liminf_{n\to\infty} \sum_{j=1}^J (A_n e_j,e_j)
\leq
\liminf_{n\to\infty} \Tr A_n. 
$$
The assertion follows as $J\to\infty$ by monotone convergence.
\end{proof}

\begin{proof}[Proof of Theorem \ref{thm.b5}]
Let $g\in L^\infty(\bbR;\Sch_{p}(\mathfrak h,\calK))$ for some $p\geq2$ and let $G$ be defined according to \eqref{e24}.
Then, using  \eqref{e27}, we obtain
\begin{multline*}
\norm{GE_{\calM}(\Lambda)}_{p}^2
=
\norm{GE_{\calM}(\Lambda)(GE_{\calM}(\Lambda))^*}_{p/2}
=
\Norm{\int_{\Lambda}g(x)g(x)^*dx}_{p/2}
\\
\leq
\norm{gg^*}_{L^\infty(\bbR;\Sch_{p/2}(\mathfrak h,\calK))}\abs{\Lambda}
=
\norm{g}^2_{L^\infty(\bbR;\Sch_{p}(\mathfrak h,\calK))}\abs{\Lambda}.
\end{multline*}
By Lemma~\ref{lma.bb1}, 
it follows that $G\in\Smooth_{p}(\calM)$ and 
$$
\norm{G}_{\Smooth_{p}(\calM)}
\leq
\norm{g}_{L^\infty(\bbR;\Sch_{p}(\mathfrak h,\calK))}.
$$

We now prove the converse implication and assume that $G\in\Smooth_{p}(\calM)$ for some $p>0$. 
Since $\Smooth_{p}(\calM)\subset\Smooth(\calM)$, 
by Theorem~\ref{thm.b3} we have the representation \eqref{e24} with some $g\in L^\infty(\bbR;\calB(\mathfrak h,\calK))$. 
We claim that 
$$
\Norm{\frac1{2\eps}\int_{\lambda-\eps}^{\lambda+\eps} g(x)g(x)^*dx- g(\lambda)g(\lambda)^*}_{\calB(\calK)}\to0
\text{  as $\eps\to0$ for a.e. $\lambda\in\bbR$.}
$$
This is the Lebesgue differentiation theorem for functions on $\bbR$ valued in the Banach space $\calB(\calK)$; 
see e.g.  \cite[Theorem 2.3.4]{HNVW}. 
Since the function $t\mapsto t^{p/2}$ is continuous on $[0,\infty)$, we infer that
$$
\Norm{\left( \frac1{2\eps}\int_{\lambda-\eps}^{\lambda+\eps} g(x)g(x)^*\,dx \right)^{p/2} - \left( g(\lambda)g(\lambda)^* \right)^{p/2}}
\to0
\qquad\text{ for a.e.}\ \lambda\in\bbR \,.
$$
By the lower semi-continuity of the trace which we have recalled in Lemma \ref{traceconv}, we obtain for almost every $\lambda\in\bbR$
\begin{multline*}
\|g(\lambda)\|_{p}^{p} 
= 
\Tr \left( g(\lambda)g(\lambda)^*\right)^{p/2} 
\leq 
\liminf_{\epsilon\to 0} \Tr \left( \frac1{2\eps}\int_{\lambda-\eps}^{\lambda+\eps} g(x)g(x)^*dx \right)^{p/2} 
\\
=
\liminf_{\epsilon\to 0} \left\| \frac1{2\eps}
GE_{\mathcal M}(\lambda-\eps,\lambda+\eps)
(GE_{\mathcal M}(\lambda-\eps,\lambda+\eps))^* \right\|_{p/2}^{p/2} \,.
\end{multline*}
By the definition of smoothness with $\varphi=\frac1{\sqrt{2\eps}}\1_{(\lambda-\eps,\lambda+\eps)}$ we have
$$
\left\| \frac1{2\eps}
GE_{\mathcal M}(\lambda-\eps,\lambda+\eps)
(GE_{\mathcal M}(\lambda-\eps,\lambda+\eps))^* \right\|_{p/2}^{p/2}
\leq \norm{G}^2_{\Smooth_{p}(\calM)} \,.
$$
This implies $g\in L^\infty(\bbR;\Sch_{p})$ and 
$$
\norm{g}_{L^\infty(\bbR;\Sch_{p})}
\leq
\norm{G}_{\Smooth_{p}(\calM)} \,.
$$
This completes the proof of the theorem.
\end{proof}

\begin{example}
Let $\mathfrak h=\calK=\ell^2$, and 
let $(e_n)_{n\in\bbN}$ be the standard basis in $\ell^2$. Define
$$
g(x) = \sum_{n=1}^\infty \1_{(n-1,n)}(x) (\cdot,e_n) e_n, \quad x\in\bbR.
$$
Then clearly $g\in L^\infty(\bbR;\Sch_{p}(\ell^2))$ with $\norm{g}_{L^\infty(\bbR;\Sch_{p})}=1$ for any $p>0$. Moreover, for the interval $\Lambda_N=(0,N)$ we find similarly to the proof of Theorem~\ref{thm.b5}
$$
\norm{GE_{\calM}(\Lambda_N)}_{p}^2
=
\Norm{\int_{\Lambda_N}g(x)g(x)^*dx}_{p/2}
= \Norm{ \sum_{n=1}^N (\cdot,e_n) e_n }_{p/2}
= N^{2/p}.
$$
For $0<p<2$ we conclude that
$$
\sup_{\Lambda\subset\bbR} |\Lambda|^{-1/2} \norm{GE_{\calM}(\Lambda_N)}_{p} 
\geq 
\sup_N \abs{\Lambda_N}^{-1/2}N^{1/p}
= \infty
$$
and therefore $G\notin\Smooth_{p}(\calM)$.
\end{example}

\subsection{An interpolation result}\label{sec.b4}

\begin{lemma}\label{lma.b6}
Let $2<q<\infty$, and let $G\in\Smooth_{q}(H)$.
Then there exists a family of operators $G(z):\calH\to\calK$,
$0\leq\Re z\leq 1$, such that:
\begin{enumerate}[\rm (i)]
\item
$G(z)\in\Smooth(H)$ for all $z$, with $\sup_{0\leq\Re z\leq1}\norm{G(z)}_{\Smooth(H)}<\infty$;
\item
$\norm{G(z)}_{\Smooth(H)}\leq 1$ for $\Re z=0$;
\item
$\norm{G(z)}_{\Smooth_2(H)}^2\leq \norm{G}_{\Smooth_{q}(H)}^{q}$ for $\Re z=1$;
\item
$G(2/q)=G$;
\item
for any $\varphi\in L^2(\bbR)$, the family of bounded operators $G(z)\varphi(H)$ 
is analytic in $z$ for $0<\Re z<1$ and continuous in $z$ for $0\leq \Re z\leq 1$.
\end{enumerate}
\end{lemma}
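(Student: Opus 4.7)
The plan is to reduce $G$ to integration against an operator-valued symbol using the multiplication-operator model for $H^{\ac}$, and then to build the family $G(z)$ by pointwise Stein-type complex interpolation applied to this symbol via its polar decomposition. Since $G\in\Smooth(H)$ vanishes on $\calH^{(\sing)}(H)$, we may (after diagonalising $H^{\ac}$ via the spectral theorem and embedding the varying fibres into a single Hilbert space $\mathfrak h$, extending $G$ by zero on the complement) assume $\calH=L^2(\bbR;\mathfrak h)$ and $H=\calM$. Theorem~\ref{thm.b5}(i) then provides $g\in L^\infty(\bbR;\Sch_{q}(\mathfrak h,\calK))$ with $Gf=\int_\bbR g(x)f(x)\,dx$ and $\norm{g}_{L^\infty(\Sch_{q})}=\norm{G}_{\Smooth_{q}(H)}$.

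Choose a measurable polar decomposition $g(x)=V(x)\abs{g(x)}$, where $V(x)$ is a partial isometry with initial space $\overline{\Ran\abs{g(x)}}$, and for $z\in S:=\{0\le\Re z\le 1\}$ define
$$
g_z(x):=V(x)\abs{g(x)}^{qz/2},
\qquad
G(z)f:=\int_\bbR g_z(x)f(x)\,dx,
$$
with $\abs{g(x)}^{qz/2}$ understood via the Borel calculus of the bounded non-negative operator $\abs{g(x)}$ (and set to zero on $\Ker\abs{g(x)}$). Since $V(x)^*V(x)$ is the support projection of $\abs{g(x)}$, one has pointwise $g_z(x)^*g_z(x)=\abs{g(x)}^{q\Re z}$. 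This single identity yields all the boundary computations at once: $\norm{g_{iy}(x)}_\calB\le 1$, $\norm{g_{1+iy}(x)}_{2}^2=\Tr\abs{g(x)}^{q}=\norm{g(x)}_{q}^{q}$, $g_{2/q}(x)=V(x)\abs{g(x)}=g(x)$, together with the uniform bound $\norm{g_z(x)}_\calB=\norm{g(x)}_\calB^{q\Re z/2}\le\max(1,\norm{G}_{\Smooth_{q}(H)}^{q/2})$. Combined with Theorems~\ref{thm.b3} and~\ref{thm.b5}(i), these facts translate respectively into properties (ii), (iii), (iv) and (i) of the lemma.

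For (v), fix $u\in\calH$ and $\varphi\in L^2(\bbR)$. In the spectral model $\varphi(H)u$ acts as pointwise multiplication by $\varphi$, so
$$
G(z)\varphi(H)u=\int_\bbR g_z(x)\varphi(x)u(x)\,dx,
$$
and the integrand lies in $L^1(\bbR;\calK)$ uniformly in $z\in S$ by Cauchy--Schwarz together with the uniform bound above. For each fixed $x$ and $h\in\mathfrak h$, the scalar spectral calculus of $\abs{g(x)}$ shows that $z\mapsto g_z(x)h$ is analytic on $\{\Re z>0\}$ and continuous on $S$. Dominated convergence upgrades this to strong continuity of $z\mapsto G(z)\varphi(H)u$ on $S$, and Morera's theorem applied to the scalar map $z\mapsto(G(z)\varphi(H)u,v)_\calK$ for arbitrary $v\in\calK$ gives weak analyticity; together with the uniform operator bound this yields norm-analyticity in the open strip. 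The main technical obstacle is the joint dependence of $g_z(x)$ on $x$ and $z$: one needs a measurable polar decomposition for $\Sch_{q}$-valued functions and careful handling of $\abs{g(x)}^{qz/2}$ near $\Ker\abs{g(x)}$ (settled by the stated convention), but once the pointwise domination by $\max(1,\norm{G}_{\Smooth_{q}(H)}^{q/2})$ is in hand, all limit interchanges are routine.
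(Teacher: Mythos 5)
Your proof is correct and follows essentially the same route as the paper: reduce to $H=\calM$ via the spectral-theorem isometry, use Theorem~\ref{thm.b5}(i) to obtain the $\Sch_q$-valued symbol $g$, take a measurable polar decomposition, and set $g_z(x)=\omega(x)\abs{g(x)}^{qz/2}$. The one cosmetic difference is that you package all four boundary computations into the single pointwise identity $g_z(x)^*g_z(x)=\abs{g(x)}^{q\Re z}$ (which is a clean way to organize them), and you supply the details of property (v) — uniform $L^1$ domination of the integrand, pointwise spectral-calculus analyticity, dominated convergence plus Morera — which the paper dismisses as "straightforward to check." None of this changes the substance; the argument matches.
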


Before coming to the proof, we recall the following consequence of the spectral theorem for self-adjoint operators. 
Let $H$ be a self-adjoint operator in $\calH$; 
then there exists a Hilbert space $\mathfrak h$ and a linear isometry (not necessarily onto)
\[
U: \calH^{\ac}(H)\to L^2(\bbR; \mathfrak h), \quad \text{ such that } \quad \varphi(H^{\ac})=U^*\varphi(\calM) U,
\label{b7}
\]
for any Borel function $\varphi$ on $\bbR$. Here 
$\calM$ is the multiplication operator \eqref{b6c} in $L^2(\bbR; \mathfrak h)$. 
Further, it is easy to see that $G\in\Smooth(H)$ if and only if  $GU^*\in\Smooth(\calM)$, with 
$$
\norm{G}_{\Smooth(H)}=\norm{GU^*}_{\Smooth(\calM)},
$$
and the same is true for the $\Smooth_q$ norms.

\begin{proof}[Proof of Lemma~\ref{lma.b6}]
By the above remarks, the question is reduced to the case $H=\calM$. 
By Theorem~\ref{thm.b5}, $G$ has the representation 
$$
Gf=\int_\bbR g(x)f(x)dx
$$
with $g\in L^\infty(\bbR; \Sch_{q}(\mathfrak h,\calK))$. 
Write the polar decomposition of $g(x)$ as 
$$
g(x)=\omega(x) \abs{g(x)}, \quad x\in\bbR,
$$
where $\omega(x)$ is a partial isometry for a.e. $x\in\bbR$. 
Now let us define
$$
G(z)f=\int_\bbR g_z(x)f(x)dx, 
\quad
g_z(x)=\omega(x)\abs{g(x)}^{qz/2}.
$$
We have
\begin{itemize}
\item
$g_z\in L^\infty(\bbR;\calB)$ for all $z$, $0\leq \Re z\leq1$, and $\sup_{0\leq\Re z\leq1}\norm{g_z}_{L^\infty(\bbR,\calB)}<\infty$;
\item
$\norm{g_z}_{L^\infty(\bbR;\calB)}\leq1$
for $\Re z=0$;
\item
$\norm{g_z}_{L^\infty(\bbR;\Sch_2)}^2=\norm{g}_{L^\infty(\bbR;\Sch_{q})}^{q}$
for $\Re z=1$;
\item
$g_{2/q}=g$.
\end{itemize}
From here, again using Theorem~\ref{thm.b5}, we obtain the properties (i)--(iii) of $G(z)$. 
The property (iv) is obvious from the definition,
and the property (v) is straightforward to check.
\end{proof}

\section{Double operator integrals}\label{sec.c}

\subsection{Overview}
The notion of double operator integrals (DOI) was initially introduced by Daletskii and Krein
in \cite{DK} and developed by Birman and Solomyak  in \cite{BS1} (see \cite{BS2} for a modern
account of the theory and for further historical references). 
Here we consider DOI from a different viewpoint; essentially, we construct an alternative
version of the theory of DOI under a different set of assumptions.

Throughout this section, $H_0$ and $H_1$ are self-adjoint operators in $\calH$ 
and $G_0$, $G_1$ are operators from $\calH$ to $\calK$
such that
$G_0\in\Smooth(H_0)$ and $G_1\in\Smooth(H_1)$. 
We will work with bounded operators $a$ on $L^2(\bbR)$ 
and with their integral kernels $a(x,y)$. 
(In practice, we will only need the notion of an integral kernel for 
finite rank operators $a$; in this case this notion can be unambiguously defined 
without difficulty.)
Informally speaking, we would like to define the double operator integral
\[
\DOI(a)=\int_\bbR\int_\bbR a(x,y) dE_{H_1}(x)G_1^*G_0dE_{H_0}(y),
\label{c1}
\]
initially for finite rank operators $a$ and eventually for all bounded operators $a$ on $L^2(\bbR)$. 
In other words, for fixed $G_0$, $G_1$, $H_0$, $H_1$, we consider the map 
$$
\DOI: \calB(L^2(\bbR))\to \calB(\calH),
$$
defined initially on the set of all finite rank operators $a$. 
We prove that this map can be extended in a natural way to the whole space 
$\calB(L^2(\bbR))$, that it is bounded and satisfies the 
operator norm and the Schatten norm bounds
\begin{align}
\norm{\DOI(a)}_\calB
&\leq 
\norm{G_0}_{\Smooth(H_0)}\norm{G_1}_{\Smooth(H_1)}\norm{a}_\calB,
\label{c2}
\\
\norm{\DOI(a)}_{p}
&\leq
\norm{G_0}_{\Smooth_{q}(H_0)}\norm{G_1}_{\Smooth_{r}(H_1)}
\norm{a}_p, \quad \tfrac1p=\tfrac1q+\tfrac1r.
\label{c3}
\end{align}

In order to make sense of the integral \eqref{c1},
in the standard approach to the theory of double operator integrals \cite{BS1,BS2}
one has to assume some degree of regularity of the kernel $a(x,y)$. 
In our framework, the regularity of $a(x,y)$ is not needed, as we are
using the smoothness of $G_0$ and $G_1$ instead.

Recall that if $G\in\Smooth(H)$, then $G|_{\calH^{(\sing)}(H)}=0$. 
Thus, it is natural to define $\DOI(a)$ such that it satisfies the property
\[
(\DOI(a)u,v)=0\quad\text{ if $u\in \calH^{(\sing)}(H_0)$ or $v\in\calH^{(\sing)}(H_1)$}
\label{c1a}
\]
(or both). 
Thus, essentially $\DOI(a)$ acts from $\calH^{\ac}(H_0)$ to $\calH^{\ac}(H_1)$.

It will be convenient to use the following notation for the constants in the estimates 
\eqref{c2} and \eqref{c3}: 
\[
A:=\norm{G_0}_{\Smooth(H_0)}\norm{G_1}_{\Smooth(H_1)},
\quad
A_{q,r}:=\norm{G_0}_{\Smooth_{q}(H_0)}\norm{G_1}_{\Smooth_{r}(H_1)}.
\label{c3a}
\]

\subsection{$\DOI(a)$ for finite rank $a$}
We begin by defining $\DOI(a)$ for finite rank operators $a$. 
Let $a$ be given by its Schmidt series, 
\[
a=\sum_{n=1}^N s_n (\cdot,\varphi_n)\psi_n,
\label{c3b}
\]
where $N$ is finite, $\{s_n\}$ are the singular values of $a$ and $\{\varphi_n\}$, $\{\psi_n\}$ are
orthonormal sets. Then the integral kernel of $a$ is given by 
$$
a(x,y)=\sum_{n=1}^N s_n \psi_n(x)\overline{\varphi_n(y)}, \quad x,y\in\bbR.
$$
In this case, we set
\[
\DOI(a)=\sum_{n=1}^N s_n (G_1\psi_n(H_1)^*)^*G_0\varphi_n(H_0)^*.
\label{c5}
\]
From this definition it follows, in particular, that the property \eqref{c1a} is satisfied.

First we need to check that definition \eqref{c5} is independent of the choice of the Schmidt series representation
\eqref{c3b}. This will follow from the next lemma.
\begin{lemma}\label{lma.c1}
For $j=0,1$, let $U_j$ be a diagonalization isometry as in \eqref{b7}, i.e.
$$
U_j: \calH^{\ac}(H_j)\to L^2(\bbR; \mathfrak h), \quad H_j=U_j^*\calM U_j,
$$
where $\mathfrak h$ is a Hilbert space and $\calM$ is the operator of multiplication by the independent variable in $L^2(\bbR;\mathfrak h)$. 
For $v_j\in\calH^\ac(H_j)$, denote $\wh v_j=U_jv_j\in L^2(\bbR;\mathfrak h)$ and write the 
representation of Theorem~\ref{thm.b3} for $G_jU_j^*$ as
$$
G_jU_j^* \wh v_j=\int_\bbR g_j(x) \wh v_j(x)dx, \quad g_j \in L^\infty(\bbR;\calB(\mathfrak h,\calK)),
\quad
j=0,1.
$$
Then for all finite rank operators $a$, we have
\[
(\DOI(a)v_0,v_1)
=
\int_\bbR\int_\bbR
a(x,y)
\bigl(g_0(y)\wh v_0(y),g_1(x)\wh v_1(x)\bigr)_\calK dx\, dy.
\label{c6}
\]
\end{lemma}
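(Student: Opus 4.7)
The proof is a direct calculation: unfold the definition \eqref{c5}, transfer everything through the diagonalisations $U_0,U_1$ of $H_0^\ac,H_1^\ac$, invoke the integral representation of Theorem~\ref{thm.b3} on each of the two resulting factors, and read off the answer.

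First, since $G_j\in\Smooth(H_j)$ annihilates $\calH^{(\sing)}(H_j)$, the operators $G_0\varphi_n(H_0)^*$ and $G_1\psi_n(H_1)^*$ vanish on the singular subspaces, so both sides of \eqref{c6} are unchanged when $v_0,v_1$ are replaced by their absolutely continuous parts. Hence I may assume $v_j\in\calH^\ac(H_j)$. Taking inner product of \eqref{c5} with $v_1$ and moving the outer adjoint to the right gives
\[
(\DOI(a)v_0,v_1)
=
\sum_{n=1}^N s_n \bigl(G_0\varphi_n(H_0)^*v_0,\,G_1\psi_n(H_1)^*v_1\bigr)_\calK,
\]
and by Theorem~\ref{thm.b1} each factor is a bounded operator from $\calH^\ac(H_j)$ to $\calK$.

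The key technical step is the spectral-representation identity
\[
G_j\varphi(H_j)^* v \;=\; \int_\bbR g_j(t)\,\overline{\varphi(t)}\,\wh v(t)\,dt,
\qquad v\in\calH^\ac(H_j),\ \varphi\in L^2(\bbR),
\]
where $\wh v=U_jv$ and the right-hand integral converges absolutely in $\calK$ because $\overline\varphi\,\wh v\in L^1(\bbR;\mathfrak h)$ by Cauchy--Schwarz (combined with $g_j\in L^\infty(\bbR;\calB(\mathfrak h,\calK))$). I would verify this first for $v\in L^\infty_{\comp}(H_j)$: in that case $\wh v$ has compact essential support and is essentially bounded, so $\overline\varphi\,\wh v\in\Dom\calM$, and the formula becomes a direct application of \eqref{e24} to $G_jU_j^*\in\Smooth(\calM)$. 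Both sides are bounded in $v\in\calH^\ac(H_j)$ (the left by Theorem~\ref{thm.b1}, the right by $\norm{g_j}_{L^\infty}\norm{\varphi}_{L^2}\norm{\wh v}_{L^2}$), so by density of $L^\infty_{\comp}(H_j)$ in $\calH^\ac(H_j)$ the identity extends to all such $v$.

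Substituting this representation into the two factors, using sesquilinearity of $(\cdot,\cdot)_\calK$, and interchanging the (finite) sum with the two integrals, one arrives at
\[
(\DOI(a)v_0,v_1)
=\int_\bbR\!\int_\bbR\Bigl(\sum_{n=1}^N s_n\psi_n(x)\overline{\varphi_n(y)}\Bigr)\bigl(g_0(y)\wh v_0(y),g_1(x)\wh v_1(x)\bigr)_\calK\,dx\,dy,
\]
and the bracketed sum is precisely $a(x,y)$, giving \eqref{c6}. As a by-product, the right-hand side of \eqref{c6} depends only on the integral kernel $a(x,y)$ and on $v_0,v_1$, which will show that the definition \eqref{c5} of $\DOI(a)$ is independent of the Schmidt decomposition of $a$. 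The only genuine obstacle is justifying the integral representation of $G_j\varphi(H_j)^*$ for arbitrary $\varphi\in L^2$ and $v\in\calH^\ac(H_j)$, where $\varphi(H_j)$ itself is typically unbounded; this is handled by the $L^\infty_{\comp}$-approximation above, after which the rest of the argument is sesquilinear bookkeeping.
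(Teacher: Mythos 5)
Your proof is correct and follows essentially the same route as the paper's: take the sesquilinear form of \eqref{c5}, transfer through the diagonalising isometries $U_j$, and apply the representation of Theorem~\ref{thm.b3} to $G_jU_j^*$. The only cosmetic differences are that the paper reduces to a rank-one $a$ by linearity before computing (whereas you carry the full Schmidt sum through), and the paper leaves the $L^\infty_\comp$-approximation argument for the validity of $G_j\varphi(H_j)^*v=\int_\bbR g_j(t)\overline{\varphi(t)}\wh v(t)\,dt$ implicit, which you spell out.
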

\begin{proof}
By linearity, it suffices to prove \eqref{c6} for rank one operators $a$. 
Let $a(x,y)=\psi(x)\overline{\varphi(y)}$.
Then 
\begin{multline*}
(\DOI(a)v_0,v_1)
=
(G_0\varphi(H_0)^*v_0,G_1\psi(H_1)^*v_1)_\calK
\\
=
(G_0U_0^*\varphi(\calM)^*U_0v_0,G_1U_1^*\psi(\calM)^*U_1v_1)_\calK
\\
=
\biggl(
\int_\bbR g_0(y)\overline{\varphi(y)}\wh v_0(y)dy,
\int_\bbR g_1(x)\overline{\psi(x)}\wh v_1(x)dx
\biggr)_\calK
\\
=
\int_\bbR\int_\bbR\psi(x)\overline{\varphi(y)}
\bigl(g_0(y)\wh v_0(y),g_1(x)\wh v_1(x)\bigr)_\calK dx\,dy,
\end{multline*}
as required.
\end{proof}
This lemma shows that $\DOI(a)$ can be alternatively defined through the integral kernel of $a$. 
Since the integral kernel is independent of the choice of the Schmidt series representation \eqref{c3b}, 
our definition of $\DOI(a)$ is also independent of this choice. 

\begin{lemma}\label{lma.c2}
For any finite rank operator $a$, one has (with $A$ as in \eqref{c3a}) 
\[
\norm{\DOI(a)}_\calB\leq A\norm{a}_\calB.
\label{c7}
\]
\end{lemma}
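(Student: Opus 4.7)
The plan is to evaluate the sesquilinear form $(\DOI(a)u,v)_\calH$ directly from the defining formula \eqref{c5} and to reduce the bound to a Bessel-type inequality already available to us. Using the Schmidt expansion $a=\sum_{n=1}^N s_n(\cdot,\varphi_n)\psi_n$, expansion of \eqref{c5} gives
\[
(\DOI(a)u,v)_\calH
=
\sum_{n=1}^N s_n \bigl(G_0\varphi_n(H_0)^*u,\ G_1\psi_n(H_1)^*v\bigr)_\calK.
\]

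The first step is to apply Cauchy--Schwarz in $\calK$ and then in the $\ell^2$ sum over $n$, after pulling out the largest singular value $s_1 = \norm{a}_\calB$:
\[
\abs{(\DOI(a)u,v)}
\leq
\norm{a}_\calB
\Bigl(\sum_{n=1}^N\norm{G_0\varphi_n(H_0)^*u}_\calK^2\Bigr)^{1/2}
\Bigl(\sum_{n=1}^N\norm{G_1\psi_n(H_1)^*v}_\calK^2\Bigr)^{1/2}.
\]
The second step is to observe that $\varphi_n(H_0)^* = \overline{\varphi_n}(H_0)$ and that $\{\overline{\varphi_n}\}_{n=1}^N$ is still orthonormal in $L^2(\bbR)$ (complex conjugation being an isometry on $L^2$). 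Theorem~\ref{thm.b2}, applied to the pair $(G_0, H_0)$ with this orthonormal family, therefore gives
\[
\sum_{n=1}^N\norm{G_0\overline{\varphi_n}(H_0)u}_\calK^2
\leq
\norm{G_0}_{\Smooth(H_0)}^2\norm{u}_\calH^2,
\]
and analogously, applied to $(G_1,H_1)$ with $\{\overline{\psi_n}\}$, it controls the second factor by $\norm{G_1}_{\Smooth(H_1)}^2\norm{v}_\calH^2$. Combining these two bounds immediately yields $\abs{(\DOI(a)u,v)}\leq A\norm{a}_\calB\norm{u}\norm{v}$, which is \eqref{c7}.

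I do not expect any genuine obstacle here: once the Schmidt representation is fed into the sesquilinear form, Theorem~\ref{thm.b2} is essentially tailored to close the estimate. The only point worth a sentence of care is that $\DOI(a)$ vanishes on the singular subspaces of $H_0$ and $H_1$ (by the convention preceding \eqref{c1a}), which is consistent both with the hypothesis $G_j\in\Smooth(H_j)$ — each $G_j$ kills $\calH^{(\sing)}(H_j)$ — and with the Bessel-type bound of Theorem~\ref{thm.b2}, which likewise concerns only the absolutely continuous part. One may therefore assume $u\in\calH^{\ac}(H_0)$, $v\in\calH^{\ac}(H_1)$ without loss of generality when carrying out the estimates above.
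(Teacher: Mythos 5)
Your proof is correct and follows essentially the same route as the paper: expand the sesquilinear form of $\DOI(a)$ via the Schmidt series, apply Cauchy--Schwarz, pull out $\norm{a}_\calB=\max_n s_n$, and close with the Bessel-type inequality of Theorem~\ref{thm.b2}. Your remark that $\varphi_n(H_0)^*=\overline{\varphi_n}(H_0)$ and that $\{\overline{\varphi_n}\}$ is again orthonormal is a small but correct clarification of a step the paper treats implicitly.
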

\begin{proof}
Let $a$ be as in \eqref{c3b}; 
observe that $\max_n s_n=\norm{a}_\calB$.
The sesquilinear form of $\DOI(a)$ is 
$$
(\DOI(a)v_0,v_1)
=
\sum_{n=1}^N s_n (G_0\varphi_n(H_0)^*v_0,G_1\psi_n(H_1)^* v_1).
$$
Applying Cauchy-Schwarz and Theorem~\ref{thm.b2}, we can estimate this form as follows:
\begin{multline*}
\abs{(\DOI(a)v_0,v_1)}
\leq 
\sum_{n=1}^N s_n \norm{G_0\varphi_n(H_0)^*v_0}_\calK\norm{G_1\psi_n(H_1)^*v_1}_\calK
\\
\leq
\norm{a}_\calB
\biggl(\sum_{n=1}^N \norm{G_0\varphi_n(H_0)^*v_0}_\calK^2\biggr)^{1/2}
\!\biggl(\sum_{n=1}^N \norm{G_1\psi_n(H_1)^*v_1}_\calK^2\biggr)^{1/2}
\!\!\leq 
A \norm{a}_\calB \norm{v_0}_\calH\norm{v_1}_\calH,
\end{multline*}
as required. 
\end{proof}

\begin{lemma}\label{lma.c3}
Let $a_n$, $a$ be finite rank operators such that $a_n\to a$ $*$-weakly.
Then $\DOI(a_n)\to\DOI(a)$ $*$-weakly.
\end{lemma}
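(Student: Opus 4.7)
The plan is to reduce $*$-weak convergence of $\DOI(a_n)$ to $*$-weak convergence of $a_n$ via the sesquilinear-form representation of Lemma~\ref{lma.c1}. By property \eqref{c1a}, both sides of $(\DOI(b) u, w)$ depend only on the absolutely continuous parts of $u, w$, so it suffices to verify convergence of matrix elements $(\DOI(a_n) v_0, v_1)_\calH$ for $v_0 \in \calH^{\ac}(H_0)$ and $v_1 \in \calH^{\ac}(H_1)$. Fix diagonalising isometries $U_0, U_1$ as in \eqref{b7} and associated $g_0, g_1 \in L^\infty(\bbR; \calB(\mathfrak h, \calK))$ provided by Theorem~\ref{thm.b3}. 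Then \eqref{c6} expresses $(\DOI(b) v_0, v_1)$ as the integral of $b(x,y)$ against $F(x,y) := (g_0(y) \wh v_0(y), g_1(x) \wh v_1(x))_\calK$.

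The key step is to expand $F$ in an orthonormal basis $\{e_k\}_{k=1}^\infty$ of $\calK$. Setting $\phi_k(y) := (g_0(y) \wh v_0(y), e_k)_\calK$ and $\psi_k(x) := (g_1(x) \wh v_1(x), e_k)_\calK$, Parseval together with the $L^\infty$-bounds on $g_0, g_1$ yield
\begin{equation*}
\sum_k \norm{\phi_k}_{L^2}^2 \leq \norm{g_0}_{L^\infty}^2 \norm{v_0}_\calH^2, \qquad \sum_k \norm{\psi_k}_{L^2}^2 \leq \norm{g_1}_{L^\infty}^2 \norm{v_1}_\calH^2.
\end{equation*}
The partial sums $F_N(x,y) := \sum_{k \leq N} \phi_k(y) \overline{\psi_k(x)}$ converge to $F$ in $L^2(\bbR^2)$, by pointwise convergence combined with the envelope $\norm{g_0(y) \wh v_0(y)}_\calK \norm{g_1(x) \wh v_1(x)}_\calK$ and dominated convergence. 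Since any finite-rank $b$ has an $L^2$ integral kernel, integrating against $b(x,y)$ and passing to the limit $N \to \infty$ yields the absolutely convergent identity
\begin{equation*}
(\DOI(b) v_0, v_1)_\calH = \sum_{k=1}^\infty (b \phi_k, \psi_k)_{L^2(\bbR)}.
\end{equation*}

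Finally, I would apply this identity to $b = a_n$ and $b = a$. A $*$-weakly convergent sequence is uniformly bounded (Banach--Steinhaus), so $\sup_n \norm{a_n}_\calB =: C < \infty$, giving the uniform bound $\abs{(a_n \phi_k, \psi_k)} \leq C \norm{\phi_k}_{L^2} \norm{\psi_k}_{L^2}$ with summable dominant $\sum_k \norm{\phi_k}_{L^2} \norm{\psi_k}_{L^2}$ (Cauchy--Schwarz). Termwise, $(a_n \phi_k, \psi_k) \to (a \phi_k, \psi_k)$ by definition of $*$-weak convergence, so dominated convergence on the $k$-sum gives $(\DOI(a_n) v_0, v_1) \to (\DOI(a) v_0, v_1)$, which is the required convergence. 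The only subtle point is the $L^2(\bbR^2)$-approximation of $F$ by its partial sums, needed to pass from \eqref{c6} to the $k$-series; once that is in place, the rest is a standard dominated-convergence argument.
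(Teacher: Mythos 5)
Your proof takes essentially the same approach as the paper: both reduce to the matrix-element formula \eqref{c6}, expand the $\calK$-inner product in an orthonormal basis to get an absolutely summable family of rank-one pairings, and then pass to the limit. The paper packages the resulting series as $\Tr(a_n K)$ with $K$ trace class and invokes the definition of $*$-weak convergence directly, whereas you unpack that equivalence by hand via Banach--Steinhaus plus dominated convergence; this is a stylistic rather than substantive difference.
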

\begin{proof}
By linearity it suffices to consider the case $a_n\to0$ $*$-weakly. 
By Lemma~\ref{lma.c1}, we have
\[
(\DOI(a_n)v_0,v_1)
=
\int_\bbR\int_\bbR
a_n(x,y)(g_0(y)\wh v_0(y),g_1(x)\wh v_1(x))_\calK dx\,dy,
\label{c8}
\]
where
$$
\int_\bbR \norm{g_j(x)\wh v_j(x)}_\calK^2 dx
\leq
\norm{G_j}^2_{\Smooth(H_j)}\norm{v_j}_\calH^2, \quad j=0,1.
$$
Let $\{e_\ell\}$ be an orthonormal basis in $\calK$. Denote
$$
F_{j,\ell}(x)=(g_j(x)\wh v_j(x),e_\ell),\quad x\in\bbR,\quad j=0,1,
$$
and consider the operator $K$ in $L^2(\bbR)$ with the integral kernel
$$
K(x,y)=\sum_\ell F_{0,\ell}(x)\overline{F_{1,\ell}(y)}.
$$
This operator is trace class, because
$$
\sum_{\ell}\norm{F_{0,\ell}}_{L^2}\norm{F_{1,\ell}}_{L^2}
\leq 
\bigl(\sum_{\ell}\norm{F_{0,\ell}}_{L^2}^2\bigr)^{1/2}
\bigl(\sum_{\ell}\norm{F_{1,\ell}}_{L^2}^2\bigr)^{1/2}
\leq 
\norm{g_0\wh v_0}_{L^2}
\norm{g_1\wh v_1}_{L^2}
<\infty.
$$
Now let us expand the inner product in \eqref{c8} as
$$
(g_0(y)\wh v_0(y),g_1(x)\wh v_1(x))_\calK
=
\sum_\ell
(g_0(y)\wh v_0(y),e_\ell)_\calK
(e_\ell,g_1(x)\wh v_1(x))_\calK;
$$
this yields
$$
(\DOI(a_n)v_0,v_1)
=
\sum_\ell \int_\bbR\int_\bbR a_n(x,y) F_{0,\ell}(y)\overline{F_{1,\ell}(x)}dx\,dy
=
\Tr (a_n K). 
$$
By our assumption on $*$-weak convergence, we have
$\Tr(a_n K)\to0$ as $n\to\infty$, and therefore $\DOI(a_n)\to0$ $*$-weakly. 
\end{proof}

\subsection{$\DOI(a)$ for bounded and compact $a$}

In the previous subsection, we have defined the map 
\[
\DOI: \calB(L^2(\bbR))\to \calB(\calH)
\label{c9again}
\]
on the set of all finite rank operators; we have checked this map is bounded in the operator norm
and continuous with respect to the $*$-weak convergence. 
Since finite rank operators are $*$-weakly dense in the set of bounded operators, 
we can extend this map (by $*$-weak continuity) onto the whole set $\calB(L^2(\bbR))$.

\begin{lemma}\label{lma.c4}
The map \eqref{c9again}, extended as explained above, is bounded with 
respect to the operator norm, and the operator norm bound \eqref{c2} holds true. 
The property \eqref{c1a} also holds for any bounded $a$.  
\end{lemma}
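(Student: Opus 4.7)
The plan is to exploit the trace-class formula that appears inside the proof of Lemma~\ref{lma.c3}. Fix spectral representations $U_0, U_1$ as in \eqref{b7} (extended by zero on the singular subspaces), let $g_0,g_1$ be the $L^\infty$-valued kernels given by Theorem~\ref{thm.b3}, and fix an orthonormal basis $\{e_\ell\}$ of $\calK$. For each $v_0,v_1\in\calH$ I introduce the operator $K_{v_0,v_1}$ on $L^2(\bbR)$ with integral kernel
$$
K_{v_0,v_1}(x,y) = \sum_\ell F_{0,\ell}(x)\overline{F_{1,\ell}(y)},\qquad F_{j,\ell}(x) := \bigl(g_j(x)\wh v_j(x), e_\ell\bigr)_\calK,
$$
where $\wh v_j = U_j v_j$. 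The Cauchy--Schwarz computation already carried out in the proof of Lemma~\ref{lma.c3} shows $K_{v_0,v_1}\in\Sch_1(L^2(\bbR))$ with
$$
\norm{K_{v_0,v_1}}_{\Sch_1} \leq \norm{g_0\wh v_0}_{L^2(\bbR;\calK)}\norm{g_1\wh v_1}_{L^2(\bbR;\calK)} \leq A\norm{v_0}_\calH\norm{v_1}_\calH,
$$
and Lemma~\ref{lma.c1} is precisely the identity $(\DOI(a)v_0,v_1) = \Tr(aK_{v_0,v_1})$ for every finite rank $a$.

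For arbitrary $a\in\calB(L^2(\bbR))$ the right-hand side $Q_a(v_0,v_1):=\Tr(aK_{v_0,v_1})$ still makes sense as the trace of a bounded operator against a trace class operator, and inherits the bound $\abs{Q_a(v_0,v_1)}\leq A\norm{a}_\calB\norm{v_0}_\calH\norm{v_1}_\calH$. Since $Q_a$ is sesquilinear in $(v_0,v_1)$ (linear/antilinear dependence is visible directly from the formula for $K_{v_0,v_1}$), Riesz representation yields a unique bounded operator on $\calH$ which I take as $\DOI(a)$. The operator norm bound \eqref{c2} is then immediate, and property \eqref{c1a} is also transparent: if $v_0\in\calH^{(\sing)}(H_0)$ then $\wh v_0=0$ by construction, all $F_{0,\ell}$ vanish, and so $K_{v_0,v_1}=0$; the case $v_1\in\calH^{(\sing)}(H_1)$ is symmetric.

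To verify that this $\DOI(a)$ agrees with the $*$-weakly continuous extension announced in the paragraph preceding the lemma, I approximate $a$ by finite rank operators $a_n = P_n a P_n$, where $P_n$ are finite rank orthogonal projections increasing strongly to the identity on $L^2(\bbR)$; then $a_n\to a$ $*$-weakly with $\norm{a_n}_\calB\leq\norm{a}_\calB$. Since $K_{v_0,v_1}\in\Sch_1$, the very definition of $*$-weak convergence yields
$$
(\DOI(a_n)v_0,v_1) = \Tr(a_n K_{v_0,v_1}) \longrightarrow \Tr(aK_{v_0,v_1}) = (\DOI(a)v_0,v_1),
$$
so $\DOI(a_n)\to\DOI(a)$ $*$-weakly. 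The argument also shows that the resulting limit does not depend on the choice of approximating sequence, so the $*$-weak extension is unambiguously defined.

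The only delicate point is that the formula for $K_{v_0,v_1}$ depends on ancillary choices (the diagonalisations $U_j$, the basis $\{e_\ell\}$) that are not intrinsic to $H_0,H_1,G_0,G_1$; one should verify that the extension $\DOI(a)$ is independent of these choices. This, however, is automatic: any two choices produce operators $K$ and $K'$ with $\Tr(aK)=\Tr(aK')$ for all finite rank $a$ (by Lemma~\ref{lma.c1}), which forces $K=K'$ since finite rank operators separate $\Sch_1$. Beyond this short bookkeeping, the argument is a clean application of Riesz representation, so I expect no substantive technical obstacle.
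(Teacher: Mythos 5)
Your argument is correct and slightly more self-contained than the paper's. The paper's own proof is very short: having (implicitly) constructed $\DOI(a)$ for bounded $a$ as the $*$-weak limit of $\DOI(P_naP_n)$, it simply invokes lower semicontinuity of the operator norm under $*$-weak convergence together with the finite-rank bound from Lemma~\ref{lma.c2} to get $\norm{\DOI(a)}_\calB\leq A\liminf\norm{a_n}_\calB\leq A\norm{a}_\calB$, and notes that \eqref{c1a} survives the limit. What you do instead is lift the trace-class operator $K_{v_0,v_1}$ out of the proof of Lemma~\ref{lma.c3} and use the formula $(\DOI(a)v_0,v_1)=\Tr(aK_{v_0,v_1})$ as the \emph{definition} of $\DOI(a)$ for all bounded $a$, verifying the bound, sesquilinearity, and \eqref{c1a} directly from the formula, then checking \emph{a posteriori} that this agrees with the $*$-weak limit construction. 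The two proofs rest on the same kernel object $K_{v_0,v_1}$; the difference is organizational. Your version has the advantage of making explicit why the $*$-weak extension announced before the lemma is unambiguous (a point the paper treats as obvious, since ``extend by $*$-weak continuity from a $*$-weakly dense set'' does require a well-definedness check), at the cost of a longer write-up. One small caveat worth keeping in mind: you implicitly assume $U_j$ is extended by zero to $\calH^{(\sing)}(H_j)$ so that $\wh v_j$ makes sense for all $v_j\in\calH$; this matches the paper's conventions but should be flagged. Beyond that bookkeeping, your argument is sound.
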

\begin{proof}
Let $P_n$ be a sequence of finite rank orthogonal projections in $L^2(\bbR)$
such that $P_n\to I$ strongly as $n\to\infty$.
Denote $a_n=P_n aP_n$.
Then $a_n\to a$ $*$-weakly and $\norm{a_n}_\calB\leq\norm{a}_\calB$ for all $n$. 
Using the bound \eqref{c7} for finite rank operators, we obtain
$$
\norm{\DOI(a)}_\calB
\leq 
\liminf_{n\to\infty}\norm{\DOI(a_n)}_\calB
\leq 
A \liminf_{n\to\infty}\norm{a_n}_\calB
\leq 
A\norm{a}_\calB.
$$
Finally, it is clear that the property \eqref{c1a} is preserved under the weak limits. 
\end{proof}

Recall that the class $\Smooth_\infty(H)\subset\Smooth(H)$ is defined by the 
additional compactness assumption \eqref{b10}.
\begin{lemma}\label{lma.c5}
Assume that $a\in\Sch_\infty$, 
$G_0\in\Smooth(H_0)$ and $G_1\in \Smooth(H_1)$, and 
suppose in addition that either $G_0\in\Smooth_\infty(H_0)$ or $G_1\in\Smooth_\infty(H_1)$ (or both). 
Then $\DOI(a)\in\Sch_\infty$.
\end{lemma}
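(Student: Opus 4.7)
The plan is to reduce to the case of finite rank $a$ by a norm-approximation argument, and then handle the finite rank case by invoking Lemma~\ref{lma.b3} on each summand of the Schmidt-type representation \eqref{c5}.

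First, I would check the finite rank case. Suppose $a$ has the representation \eqref{c3b}, so that
$$
\DOI(a) = \sum_{n=1}^N s_n (G_1\psi_n(H_1)^*)^* G_0\varphi_n(H_0)^*.
$$
Since $\varphi_n,\psi_n\in L^2(\bbR)$, the factors $G_0\overline{\varphi_n}(H_0)$ and $G_1\overline{\psi_n}(H_1)$ are bounded by Theorem~\ref{thm.b1}. If $G_0\in\Smooth_\infty(H_0)$, Lemma~\ref{lma.b3} gives that $G_0\overline{\varphi_n}(H_0)$ is compact for every $n$, so each summand in \eqref{c5} is a product of a bounded operator and a compact operator, and the finite sum is compact. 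Symmetrically, if $G_1\in\Smooth_\infty(H_1)$, Lemma~\ref{lma.b3} yields compactness of $G_1\overline{\psi_n}(H_1)$, hence of its adjoint, and again each summand is compact. In either case $\DOI(a)\in\Sch_\infty$ for finite rank $a$.

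Next, I would extend this to arbitrary $a\in\Sch_\infty$. Every compact operator on $L^2(\bbR)$ is the operator-norm limit of a sequence of finite rank operators $a_n$. By the operator-norm bound \eqref{c2} from Lemma~\ref{lma.c4} applied to the difference $a-a_n$,
$$
\norm{\DOI(a)-\DOI(a_n)}_\calB \leq A\,\norm{a-a_n}_\calB \longrightarrow 0.
$$
Since each $\DOI(a_n)$ is compact by the previous paragraph and $\Sch_\infty$ is closed in the operator norm, the limit $\DOI(a)$ lies in $\Sch_\infty$, which is the desired conclusion.

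There is no real obstacle here: the essential content is already packaged in Lemma~\ref{lma.b3} (compactness of $G\varphi(H)$ for $G\in\Smooth_\infty(H)$, $\varphi\in L^2$) together with the definition \eqref{c5} and the norm bound \eqref{c7}. The only point that requires a moment of care is the interpretation of $\varphi_n(H_0)^*$ and $\psi_n(H_1)^*$ as bounded operators despite $\varphi_n,\psi_n$ being merely $L^2$ functions; this is legitimate because $G_0$ and $G_1$ are Kato-smooth, and the products $G_j\overline{\varphi_n}(H_j)$ are controlled by Theorem~\ref{thm.b1}.
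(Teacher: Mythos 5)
Your proof is correct and takes essentially the same route as the paper's: reduce to finite rank $a$ via the operator norm bound \eqref{c2} from Lemma~\ref{lma.c4}, then observe that each term in the representation \eqref{c5} is a product of a bounded operator (Theorem~\ref{thm.b1}) and a compact operator (Lemma~\ref{lma.b3}). The paper further reduces to rank one by linearity, but that is an inessential presentational difference.
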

\begin{proof}
Consider the case $G_0\in\Smooth_\infty(H_0)$. 
By Lemma~\ref{lma.c4}, it suffices to check that $\DOI(a)\in\Sch_\infty$
for all finite rank $a$. By linearity, it suffices to consider rank one operators $a$.
Let $a(x,y)=\psi(x)\overline{\varphi(y)}$; then 
$$
\DOI(a)=(G_1\psi(H_1)^*)^* G_0\varphi(H_0)^*.
$$
Here $G_1\psi(H_1)^*$ is bounded by Theorem~\ref{thm.b1} 
and $G_0\varphi(H_0)^*$ is compact by Lemma~\ref{lma.b3}. 
This gives the compactness of $\DOI(a)$. 
The case $G_1\in\Smooth_\infty(H_1)$ is considered in the same way. 
\end{proof}

\subsection{$\DOI(a)$ for $a\in\Sch_p$}

\begin{theorem}\label{thm.c6}
Let $p$, $q$, $r$ be finite positive numbers such that $\frac1p=\frac1q+\frac1r$. 
Let $G_0\in\Smooth_{q}(H_0)$, $G_1\in\Smooth_{r}(H_1)$.
Then for all $a\in\Sch_p$, we have $\DOI(a)\in\Sch_p$ and the Schatten norm bound
\eqref{c3} holds true.

This extends to $q=\infty$ (resp. $r=\infty$) if one replaces $\Smooth_q(H_0)$ (resp. $\Smooth_r(H_1)$)
by $\Smooth(H_0)$ (resp. $\Smooth(H_1)$). 
\end{theorem}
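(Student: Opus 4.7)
The plan is to reduce to the multiplication-operator setting, factor the DOI via a polar decomposition of $a$, and establish a \emph{kernel bound} through a combination of orthogonality of Schmidt components and Stein complex interpolation.

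First, by the spectral representation argument used before Lemma~\ref{lma.b6}, we may assume $H_0=\calM_0$ and $H_1=\calM_1$ are multiplication operators on $L^2(\bbR;\mathfrak h_0)$ and $L^2(\bbR;\mathfrak h_1)$, with $G_j$ represented via Theorem~\ref{thm.b3} by $g_j\in L^\infty(\bbR;\calB(\mathfrak h_j,\calK))$. For finite-rank $a$, Lemma~\ref{lma.c1} yields the sandwich identity $\DOI(a)=M_1^*(a\otimes I_\calK)M_0$, where $M_j$ is multiplication by $g_j$. Using the polar decomposition $a=u|a|$, factor $a=b_0b_1$ with $b_0=u|a|^{p/r}\in\Sch_r$ and $b_1=|a|^{p/q}\in\Sch_q$, so that $\|b_0\|_r=\|a\|_p^{p/r}$ and $\|b_1\|_q=\|a\|_p^{p/q}$. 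Setting $B_j=b_j\otimes I_\calK$, this gives
$$
\DOI(a)=(B_0^*M_1)^*(B_1M_0),
$$
and by the Schatten H\"older inequality ($1/p=1/q+1/r$) the theorem follows once we establish the kernel bound
$$
\|(b\otimes I_\calK)M_g\|_s\le\|b\|_s\|G\|_{\Smooth_s(\calM)},\qquad s\in(0,\infty],
$$
applied separately to $(s,b,g)=(q,b_1,g_0)$ and $(s,b,g)=(r,b_0^*,g_1)$.

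The kernel bound splits into two regimes. For $s\in(0,2]$, take the Schmidt expansion $b=\sum_k t_k(\cdot,\chi_k)\xi_k$ in $L^2(\bbR)$; then $(b\otimes I_\calK)M_g=\sum_k t_kT_k$ with $T_ku=\xi_k\otimes W_ku$ and $W_k=G\chi_k(\calM)^*$. The orthogonality $(\xi_k,\xi_{k'})_{L^2}=\delta_{kk'}$ gives $T_k^*T_{k'}=\delta_{kk'}W_k^*W_k$, so
$$
\bigl((b\otimes I_\calK)M_g\bigr)^*\bigl((b\otimes I_\calK)M_g\bigr)=\sum_k t_k^2W_k^*W_k.
$$
Since $s/2\le1$ and $x\mapsto x^{s/2}$ is concave on $[0,\infty)$, the Rotfeld-type trace inequality $\Tr\bigl(\sum_kA_k\bigr)^{s/2}\le\sum_k\Tr A_k^{s/2}$ (for positive $A_k$) combined with $\|W_k\|_s\le\|G\|_{\Smooth_s}$ (valid since $\|\chi_k\|_{L^2}=1$) yields
$$
\|(b\otimes I_\calK)M_g\|_s^s\le\sum_k t_k^s\|W_k\|_s^s\le\|G\|_{\Smooth_s}^s\|b\|_s^s.
$$
For $s\in(2,\infty)$ one interpolates between $s=\infty$ (trivial) and $s=2$, where a direct Hilbert--Schmidt computation gives $\|(b\otimes I_\calK)M_g\|_2\le\|b\|_2\|g\|_{L^\infty(\bbR;\Sch_2)}$. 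The analytic families $b(z)=v_b|b|^{sz/2}$ (from the polar decomposition $b=v_b|b|$) and $g(z)=w_g|g|^{sz/2}$ (pointwise polar decomposition of $g$) satisfy $b(2/s)=b$, $g(2/s)=g$, have operator norm at most $1$ on $\Re z=0$, and lie in $\Sch_2$, resp.\ $L^\infty(\bbR;\Sch_2)$, with norms bounded by $(\|b\|_s\|g\|_{L^\infty(\bbR;\Sch_s)})^{s/2}$ on $\Re z=1$. Stein's complex interpolation at $\theta=2/s$ then delivers the bound, and Theorem~\ref{thm.b5}(i) supplies the equality $\|g\|_{L^\infty(\bbR;\Sch_s)}=\|G\|_{\Smooth_s}$ for $s\ge2$. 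The case $q=\infty$ (resp.\ $r=\infty$) is handled by the degenerate factorization with $\alpha=1,\beta=0$ (resp.\ $\alpha=0,\beta=1$), so that one of the $b_j$ becomes a partial isometry and one factor is estimated by the $s=\infty$ case of the kernel bound. Extension to general $a\in\Sch_p$ follows by density of finite-rank operators.

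The main technical obstacle is the kernel bound for $s\in(2,\infty)$. A na\"ive complex interpolation of the inequality $\|BM\|_s\le\|b\|_s\|G\|_{\Smooth_s}$ between $s=2$ and $s=\infty$ would yield an estimate involving $\|G\|_{\Smooth_2}^{\theta}\|G\|_\Smooth^{1-\theta}$, which by log-convexity of the $\Smooth$ norms is strictly larger than $\|G\|_{\Smooth_s}$, producing a suboptimal bound. The correct strategy, as indicated above, is to interpolate the representing function $g$ itself via its pointwise polar decomposition and then invoke the sharp equality $\|g\|_{L^\infty(\bbR;\Sch_s)}=\|G\|_{\Smooth_s}$ available for $s\ge2$.
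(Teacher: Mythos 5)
Your proof is correct and follows essentially the same route as the paper's: factorize $\DOI(a)$ into two Schatten-class pieces and bound each via the Rotfeld inequality for $s\leq 2$ and Hadamard three-lines interpolation on polar-decomposition analytic families (in both the operator factor and the representing function $g$) for $s\geq 2$, invoking Theorem~\ref{thm.b5} for the $L^\infty(\bbR;\Sch_s)$ characterization. The only difference is one of packaging: you pre-factorize $a=b_0b_1$ and pass through the sandwich identity $\DOI(a)=M_1^*(a\otimes I_\calK)M_0$ to isolate a stand-alone kernel bound $\norm{(b\otimes I_\calK)M_g}_s\leq\norm{b}_s\norm{G}_{\Smooth_s(\calM)}$, whereas the paper folds the Schmidt data of $a$ directly into auxiliary operators $T_j:\calH\to\ell^2(\bbN;\calK)$ — identifying $\Span\{\varphi_n\}\otimes\calK$ with $\ell^2(\bbN;\calK)$ shows these are the same operators.
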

\begin{proof}
First let us consider the case of finite $q$, $r$. 
By a density argument it suffices to prove \eqref{c3} for finite rank operators $a$.  Let $a$ be given by its Schmidt series \eqref{c3b}, so 
$$
\norm{a}_p^p=\sum_{n=1}^N s_n^p
\quad\text{ and }\quad
\DOI(a)=\sum_{n=1}^N s_n (G_1\psi_n(H_1)^*)^*G_0\varphi_n(H_0)^*.
$$
We write $\DOI(a)$ in a factorised form:
$$
\DOI(a)=T_1^*T_0,
$$
where the maps $T_j: \calH\to\ell^2(\bbN;\calK)$, $j=0,1$ are defined by 
\begin{align*}
(T_0 u)_n =s_n^{p/q} G_0\varphi_n(H_0)^* u, \quad n\in\bbN,
\\
(T_1 u)_n =s_n^{p/r} G_1\psi_n(H_1)^* u, \quad n\in\bbN.
\end{align*}
Our aim is to show that $T_0\in \Sch_q$ and $T_1\in\Sch_r$ with the norm bounds
\begin{align}
\norm{T_0}_{q}&\leq \norm{a}_p^{p/q}\norm{G_0}_{\Smooth_q(H_0)}, 
\label{c8b}
\\
\norm{T_1}_{r}&\leq \norm{a}_p^{p/r}\norm{G_1}_{\Smooth_r(H_1)}.
\label{c8c}
\end{align}
From \eqref{c8b} and \eqref{c8c} the required result follows immediately by an application
of the ``H\"older inequality for $\Sch_p$ classes": 
$$
\norm{\DOI(a)}_p=\norm{T_1^*T_0}_p\leq \norm{T_1}_q\norm{T_0}_r\leq\norm{a}_p A_{q,r}.
$$
Let us prove the bound \eqref{c8b}; the second bound \eqref{c8c} is considered in the same way.

\emph{Case 1: $0<q\leq 2$.}
Consider the operator
$$
T_0^*T_0=\sum_{n=1}^N s_n^{2p/q}(G_0\varphi_n(H_0)^*)^*G_0\varphi_n(H_0)^*. 
$$
We use the ``triangle inequality'' for $\norm{\cdot}_{q/2}^{q/2}$, see \eqref{b7aa}:
$$
\norm{T_0}_q^q
=
\norm{T_0^*T_0}_{q/2}^{q/2}
\leq
\sum_{n=1}^N s_n^p\norm{(G_0\varphi_n(H_0)^*)^*G_0\varphi_n(H_0)^*}_{q/2}^{q/2}
=
\sum_{n=1}^N s_n^p\norm{G_0\varphi_n(H_0)^*}_q^q.
$$
By the definition of the $\Sch_{q}$-valued smoothness, 
$$
\norm{G_0\varphi_n(H_0)^*}_{q}
\leq
\norm{G_0}_{\Smooth_{q}(H_0)}\norm{\varphi_n}_{L^2}
=
\norm{G_0}_{\Smooth_{q}(H_0)},
$$
since $\varphi_n$ are normalised in $L^2$. 
Putting this together, we obtain the bound \eqref{c8b}.

\emph{Case 2: $q\geq2$.}
Here we use complex interpolation between the cases $q=2$ and $q=\infty$ and employ Lemma~\ref{lma.b6}.

Let $G_0(z)$ be the analytic family as in Lemma~\ref{lma.b6} with $G=G_0$ and $H=H_0$. 
For $0\leq \Re z\leq 1$, let $T_0(z): \calH\to\ell^2(\bbN;\calK)$
be defined by 
$$
(T_0(z)u)_n=s_n^{pz/2}G_0(z)\varphi_n(H_0)^*u,\quad n\geq 1.
$$
Let us compute the operator norm of $T_0(z)$. Using Theorem~\ref{thm.b2}, we obtain
\begin{multline*}
\sum_{n=1}^N \norm{(T_0(z)u)_n}_{\calK}^2
\leq
\norm{a}_\calB^{p\Re z}
\sum_{n=1}^N \norm{G_0(z)\varphi_n(H_0)^*u}_{\calK}^2
\\
\leq
\norm{a}_\calB^{p\Re z}
\norm{G_0(z)}_{\Smooth(H_0)}^2\norm{u}_\calH^2.
\end{multline*}
Thus, $T_0(z)$ is bounded in the operator norm for all $0\leq \Re z\leq 1$
and 
$$
\norm{T_0(z)}_\calB\leq \norm{G_0(z)}_{\Smooth(H_0)} \leq 1, \quad \Re z=0.
$$
Next, for $\Re z=1$ the operator $T_0(z)$ is Hilbert-Schmidt. 
Indeed, using the estimates of Lemma~\ref{lma.b6}, we obtain
\begin{multline*}
\norm{T_0(z)}^2_2
=
\sum_{n=1}^N s_n^p \norm{G_0(z)\varphi_n(H_0)^*}_2^2
\\
\leq
\sum_{n=1}^N s_n^p \norm{G_0(z)}_{\Smooth_2(H_0)}^2
\leq
\norm{a}_p^p \norm{G_0}_{\Smooth_{q}(H_0)}^{q}, 
\quad \Re z=1.
\end{multline*}
Further, it is straighforward to see that $T_0(z)$ is analytic in $0<\Re z<1$, 
operator norm continuous for $0\leq \Re z\leq 1$ and $T_0(2/q)=T_0$. 
By Hadamard's three lines theorem for Schatten classes \cite[Thm. III.13.1]{Gohberg-Krein}, we obtain
$$
\norm{T_0}_{q}
=
\norm{T_0(2/q)}_{q}
\leq
\biggl(
\norm{a}_p^{p/2}\norm{G_0}^{q/2}_{\Smooth_{q}(H_0)}
\biggr)^{2/q}
=
\norm{a}_p^{p/q}\norm{G_0}_{\Smooth_{q}(H_0)},
$$
as required. 

Finally, let us briefly discuss the case $r=\infty$, $q=p$ (the case $q=\infty$, $r=p$ is considered in the same way). 
Here we set 
\begin{align*}
(T_0 u)_n &=s_nG_0\varphi_n(H_0)^* u, \quad n\in\bbN,
\\
(T_1 u)_n &= G_1\psi_n(H_1)^* u, \quad n\in\bbN.
\end{align*}
Now we have an operator norm bound for $T_1$ by Theorem~\ref{thm.b2} and the $\Sch_q$-norm bound
for $T_0$ by the same argument as above (considering separately the $q\leq 2$ and $q\geq2$ cases). 
Combining these bounds, we obtain
$$
\norm{\DOI(a)}_p=\norm{T_1^*T_0}_p\leq \norm{T_1}_\calB\norm{T_0}_p\leq\norm{a}_p A_{p,\infty},
$$
as required.
\end{proof}

\section{The map $f\mapsto\widecheck{f}$ }\label{sec.d}

\subsection{Overview}
As in the Introduction, for a function $f:\bbR\to\bbC$, we denote by $\wf$ the 
divided difference
$$
\wf(x,y):=\frac{f(x)-f(y)}{x-y}, \quad x,y\in\bbR.
$$
By a slight abuse of notation, we also denote by 
$\wf$ the operator in $L^2(\bbR)$ with the integral kernel $\wf(x,y)$. 
Of course, this definition requires some assumptions on $f$; we will be 
more precise below.  
Our aim in this section is to establish the boundedness of 
$f\mapsto\wf$ as a map from $\BMO(\bbR)$ to $\calB(L^2(\bbR))$ and from $B_{p,p}^{1/p}(\bbR)$ to $\Sch_p$. 
The content of this section is probably well-known to specialists; we just need to recall the required results in
notation convenient for the next section.

\subsection{Preliminaries on $\BMO$}

The Hardy space $H^p(\bbC_+)$, $p\geq1$, is defined in the standard way as the space of all 
analytic functions $u$ in the upper half-plane such that the norm
$$
\norm{u}_{H^p(\bbC_+)}^p
=
\sup_{y>0}\int_{-\infty}^\infty \abs{u(x+iy)}^pdx
$$
is finite. As usual, we identify the function $u\in H^p(\bbC_+)$ with its
boundary values $u(x)=u(x+i0)$, which exist for a.e. $x\in\bbR$.
The spaces $H^p(\bbC_-)$ are defined analogously. 
In fact, we will only need the cases $p=1$ and $p=2$.

The space $\BMO(\bbR)$ (bounded mean oscillation) consists of all locally integrable functions $f$ on $\bbR$
such that the following supremum over all bounded intervals $I\subset \bbR$
is finite:
$$
\sup_{I}\jap{\abs{f-\jap{f}_I}}_I<\infty, 
\quad
\jap{f}_I=\abs{I}^{-1}\int_I f(x)dx. 
$$
Observe that this supremum vanishes on constant functions. 
Strictly speaking, the elements of 
$\BMO(\bbR)$ should be regarded not as functions but as
equivalence classes $\{f+\const\}$; in practice, we will deal 
with individual functions but bear in mind that an arbitrary constant 
can be added to a function without affecting the BMO norm.
Observe that for constant functions, the kernel \eqref{d1} 
vanishes identically.

Functions in $\BMO(\bbR)$ belong to $L^p(-R,R)$ for any $R>0$ and any $p<\infty$, 
but not for $p=\infty$: they may have logarithmic singularities. 
These functions also satisfy \cite[Theorem VI.1.2]{Garnett}
\[
f\in\BMO(\bbR)
\quad\Rightarrow\quad
\int_{-\infty}^\infty \frac{\abs{f(x)}}{1+x^2}dx<\infty.
\label{d2}
\]
Fefferman's duality theorem \cite[Theorem VI.4.4]{Garnett}
says that for any $f\in\BMO(\bbR)$, the 
linear functional on $H^1(\bbC_+)$, 
\[
T_f(u):=\int_{-\infty}^\infty f(x)u(x)dx,
\label{d3}
\]
defined initially on a suitable dense set of functions $u$, extends to 
the whole space $H^1(\bbC_+)$ as a bounded linear functional
and that conversely, any bounded linear functional on $H^1(\bbC_+)$
can be realised in this way with some $f\in\BMO(\bbR)$.
The norm of $T_f$ in the dual space $H^1(\bbC_+)^*$ will be denoted by
$\norm{T_f}_{H^1(\bbC_+)^*}$.

A minor technical issue here is that the integral in \eqref{d3} need not
make sense for all $f\in\BMO$ and all $u\in H^1(\bbC_+)$. 
This explains the need for using certain dense sets of $f$'s and $u$'s in what follows. 

There are many equivalent ways to define a norm on $\BMO(\bbR)$. 
We choose the one directly related to Fefferman's duality theorem. 
For $f\in\BMO(\bbR)$, we set
$$
\norm{f}_{\BMO}
:=
\max\{\norm{T_f}_{H^1(\bbC_+)^*}, \norm{T_{\overline{f}}}_{H^1(\bbC_+)^*}\}.
$$
We will say that $f_n\to f$  $*$-weakly in $\BMO(\bbR)$, if we have the weak 
convergence of linear functionals $T_{f_n}\to T_f$ and 
$T_{\overline{f}_n}\to T_{\overline{f}}$
on $H^1(\bbC_+)$. 

We will denote by $\calR$ the set of all bounded rational functions of $x\in\bbR$: 
$$
\calR=\{p/q: p, q \text{ polynomials}, \deg p\leq \deg q,\, q(x)\not=0 \text{ for } x\in\bbR\}.
$$
The subspace $\CMO(\bbR)\subset\BMO(\bbR)$ (continuous mean oscillation) 
is the closure of all rational functions $\calR$ in $\BMO(\bbR)$. 
(Alternatively, one can define $\CMO$ as the closure in $\BMO$ of the set of all functions of the form $f+\const$, 
$f\in C_0^\infty(\bbR)$.)
\begin{remark*}
The space $\CMO(\bbR)$ is slightly  smaller than the more commonly used space $\VMO(\bbR)$ (vanishing mean oscillation) of functions
defined by the condition
$$
\lim_{\epsilon\to 0} \sup_{|I|\leq\epsilon}\jap{\abs{f-\jap{f}_I}}_I = 0
$$
(see, e.g., \cite[Section 2A]{Rochberg}).
Roughly speaking, the functions in $\VMO$ must be ``more regular than BMO'' locally, 
while the functions in $\CMO$ must be ``more regular than BMO''  
both locally and at infinity. 
For example, the function $\log(1+x^2)$ belongs to $\VMO$ but not to $\CMO$. 
\end{remark*}

Finally, we will need the following

\begin{lemma}\label{lma.d1}
The set $\calR$ of rational functions is dense in $\BMO(\bbR)$ with respect to $*$-weak convergence. 
\end{lemma}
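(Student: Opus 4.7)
\smallskip
\textbf{Proof outline.} The plan is to apply Hahn--Banach separation in the locally convex space $\BMO(\bbR)$ equipped with the $*$-weak topology. Since $\calR$ is a $\bbC$-linear subspace of $\BMO(\bbR)$, its $*$-weak closure equals $\BMO(\bbR)$ if and only if every $*$-weakly continuous $\bbC$-linear functional on $\BMO(\bbR)$ that annihilates $\calR$ is identically zero. So the task reduces to (a) characterizing the $*$-weakly continuous $\bbC$-linear functionals, and (b) showing that any such functional vanishing on $\calR$ must vanish.

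For step (a), I would note that the $*$-weak topology is generated by the seminorms $f \mapsto |T_f(u)|$ and $f \mapsto |T_{\overline{f}}(u)|$ with $u \in H^1(\bbC_+)$. The first comes from the $\bbC$-linear functional $f \mapsto T_f(u)$; the second has the same modulus as the $\bbC$-linear functional $f \mapsto \overline{T_{\overline{f}}(u)} = \int f\,\overline{u}\,dx$, and complex conjugation identifies $\overline{u}$ with an element of $H^1(\bbC_-)$. By the standard characterization of continuous linear functionals on a locally convex space, any continuous $\bbC$-linear functional $\Phi$ is then a finite linear combination of these functionals, which collects into the form
$$
\Phi(f) = \int_\bbR f(x)\,w(x)\,dx, \qquad w \in H^1(\bbC_+) + H^1(\bbC_-) \subset L^1(\bbR).
$$
For step (b), assuming $\Phi|_{\calR} = 0$, I would test on the Cauchy kernels $r_z(x) := 1/(x-z)$ for $z \in \bbC \setminus \bbR$, which belong to $\calR$ since $\deg 1 \le \deg(x-z)$ and $x-z$ has no real zero. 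This yields $\int w(x)/(x-z)\,dx = 0$ for every $z \notin \bbR$. Decomposing $w = w_+ + w_-$ with $w_\pm \in H^1(\bbC_\pm)$, the Cauchy reproducing formula identifies this integral with $2\pi i\,w_+(z)$ for $z \in \bbC_+$ and with $-2\pi i\,w_-(z)$ for $z \in \bbC_-$; hence $w_+ \equiv 0$ and $w_- \equiv 0$, so $w = 0$ and $\Phi = 0$, giving the desired contradiction.

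The main obstacle is step (a): the defining family of seminorms mixes the $\bbC$-linear family $\{T_f(u)\}$ with the antilinear family $\{T_{\overline{f}}(u)\}$, and one has to use the $\bbC$-linearity of $\Phi$ carefully to absorb antilinear contributions into integration against $\overline{u} \in H^1(\bbC_-)$, thereby arriving at a single integrand $w \in H^1(\bbC_+) + H^1(\bbC_-)$. Once this representation is in place, the Cauchy-kernel test and the reproducing formula for Hardy spaces close the argument in a standard way.
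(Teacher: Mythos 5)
Your argument is a genuinely different route from the paper's, and it is correct as a proof that $\calR$ is dense in $\BMO(\bbR)$ in the locally convex \emph{topology} generated by the seminorms $f\mapsto\abs{T_f(u)}$ and $f\mapsto\abs{T_{\overline f}(u)}$. The paper instead argues constructively: it transfers the problem to $\BMO(\bbT)$ via the conformal map $\omega(\zeta)=i(1+\zeta)/(1-\zeta)$, approximates $h=f\circ\omega$ by its Fej\'er means $h_n$ (uniformly bounded in $\BMO(\bbT)$ by duality with $H^1(\bbD)$, and convergent against trigonometric polynomials, hence against all of $H^1(\bbD)$), and pulls $f_n=h_n\circ\omega^{-1}$ back to rational functions. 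Your soft argument is clean in all its steps: the identification of the weak-$*$ continuous linear functionals with $\Span F$ for the generating family $F$ is the standard duality for initial topologies; the antilinear family is correctly absorbed, since $\overline{T_{\overline f}(u)}$ is linear in $f$ and corresponds to integration against $\overline u\in H^1(\bbC_-)$; and because you only ever evaluate the annihilating functional on bounded rational $f$, the caveat about the convergence of $\int fu\,dx$ for general $f\in\BMO(\bbR)$ never intervenes. The Cauchy-kernel test and the signs in the reproducing formula ($2\pi i\,w_+(z)$ on $\bbC_+$, $-2\pi i\,w_-(z)$ on $\bbC_-$) are also right.

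The one genuine gap is sequential versus net convergence. Hahn--Banach separation yields only that every weak-$*$ neighbourhood of $f$ meets $\calR$, i.e.\ approximation by \emph{nets}; but the paper's notion of $*$-weak convergence is defined for sequences, and Lemma~\ref{lma.d1} is invoked in the proof of Theorem~\ref{thm.e2} to pass to a limit along a sequence $f_n\to f$. Passing from net density to sequential density is not automatic: the weak-$*$ topology is metrizable only on norm-bounded sets, and your separation argument provides no control on $\norm{f_n}_{\BMO}$ for the approximants (there exist weak-$*$ dense subspaces of duals of separable Banach spaces that are not weak-$*$ sequentially dense). The paper's Fej\'er construction supplies exactly the missing ingredient, namely an explicit sequence with $\norm{f_n}_{\BMO}\leq C\norm{f}_{\BMO}$. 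To close your proof you would need either to add such a uniform bound, or to verify that all the subsequent continuity statements (Lemma~\ref{lma.d2}(ii), Lemma~\ref{lma.c3}, Theorem~\ref{thm.e1}) hold for nets — which their proofs do in fact permit, but which is not what the lemma as stated asserts.
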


The proof is given in the Appendix. 

\subsection{Besov spaces}

Let $w\in C_0^\infty(\bbR)$, $w\geq0$, be a function with $\supp w\subset [1/2,2]$ and such that
$$
\sum_{j\in\bbZ}w_j(x)=1, \quad x>0, \quad \text{ where }w_j(x)=w(x/2^j). 
$$
The (homogeneous) Besov class $B_{p,p}^{1/p}(\bbR)$ is defined as the space of tempered distributions $f$ on $\bbR$ such that
\[
\norm{f}^p_{B_{p,p}^{1/p}}:=
\sum_{j\in\bbZ} 2^j 
\bigl(\norm{f*\wh w_j}_{L^p(\bbR)}^p+\norm{f*\overline{\wh w_j}}_{L^p(\bbR)}^p\bigr)
<\infty.
\label{A1}
\]
Here $*$ is the convolution and $\wh w_j$ is the Fourier transform of $w_j$, 
$$
\wh w_j(t)=\frac1{2\pi} \int_\bbR e^{-itx}w_j(x)dx. 
$$
Observe that according to this definition, any polynomial $f$ belongs to $B_{p,p}^{1/p}(\bbR)$
(as the Fourier transform of a polynomial is supported at the origin). 
In the context of this paper, we consider $B_{p,p}^{1/p}(\bbR)\cap \BMO(\bbR)$, which 
reduces an arbitrary polynomial to an arbitrary constant.  

The definition of $B_{p,p}^{1/p}$ is independent of the choice of the function $w$. 
However, the precise value of $\norm{f}_{B^{1/p}_{p,p}}$ will, of course, depend
on this choice.

\subsection{Discussion: $\wf$ and Hankel operators}

Recall that the orthogonal projection $P_+:L^2(\bbR)\to H^2(\bbC_+)$ 
onto the Hardy class is given by 
$$
(P_+ u)(x)=-\frac1{2\pi i}\lim_{\eps\to+0}\int_{-\infty}^\infty \frac{u(y)}{x-y+ i\eps}dy,
\quad
u\in L^2(\bbR).
$$
Comparing this with \eqref{d1}, we see that, at least for smooth bounded functions $f$, 
the operator $\wf$ can be identified with the commutator $2\pi i[P_+,M_f]$, 
where $M_f$ is the operator of multiplication by $f$. 
Further, formally we have (denoting $P_-=I-P_+$)
$$
\frac1{2\pi i}\wf
=
P_+M_f-M_fP_+
=
P_+M_fP_--P_-M_fP_+.
$$
In accordance with this, we define $\wf$ initially via the sesquilinear form
(denoting $u_\pm=P_\pm u$)
\[
(\wf u,v)=2\pi i(fu_-,v_+)-2\pi i (fu_+,v_-), 
\quad
u,v\in L^\infty_\comp(\bbR).
\label{d4a}
\]
Let us explain why the inner products in \eqref{d4a} are well-defined. 
Since $u\in L^\infty_\comp(\bbR)$, for some $R>0$ we have
$$
\int_{-R}^R (\abs{u_+(x)}^p+\abs{u_-(x)}^p)dx<\infty, \quad \forall p<\infty,
$$
and
$$
\abs{u_+(x)}+\abs{u_-(x)}\leq C\abs{x}^{-1}, \quad \abs{x}>R,
$$
and similar bounds hold for $v_\pm$.
Recall also that 
$f\in L^p(-R,R)$ for any $R>0$ and any $p<\infty$, and the 
integral \eqref{d2} converges. Putting this together, we see that the integrals
$$
\int_{-\infty}^\infty f(x)u_-(x)\overline{v_+(x)}dx
\quad \text{ and }\quad
\int_{-\infty}^\infty f(x)u_+(x)\overline{v_-(x)}dx
$$
converge absolutely, and so the inner products in \eqref{d4a} are well defined. 
Although these inner products need not make sense for arbitrary $u,v\in L^2$, below we will 
see that $(\wf u,v)$ is bounded in $u,v$ in the $L^2$ norm, and therefore $\wf$ 
extends as a bounded operator to $L^2$. 

Further, we have 
\[
\frac1{2\pi i}\wf
=
P_+fP_--P_-fP_+
=
\begin{pmatrix}
0 & P_+fP_-
\\
-P_-fP_+ & 0
\end{pmatrix}
\label{d4c}
\]
with respect to the orthogonal decomposition $L^2(\bbR)=\Ran P_+\oplus\Ran P_-$.
This gives an immediate (and well-known) connection with Hankel operators. 
For $f\in\BMO(\bbR)$, the Hankel operator  $H(f)$ is defined by  
$$
H(f): H^2(\bbC_+)\to H^2(\bbC_-), 
\quad
H(f) u=P_-(fu), \quad u\in H^2(\bbC_+).
$$
Thus, $P_-fP_+$ is exactly the Hankel operator $H(f)$ but defined on the wider space $L^2(\bbR)$; 
in particular, the operator norm (and all Schatten norms) of the operators $P_-fP_+$ and $H(f)$
coincide. This shows that the required results on the boundedness and Schatten class properties of $\wf$ 
follow directly from the corresponding known results on Hankel operators. 
Below we make this explicit. 

\subsection{Boundedness of $\wf$}

\begin{lemma}\label{lma.d2}
\begin{enumerate}[\rm (i)]
\item
Let $f\in\BMO(\bbR)$. 
Then the sesquilinear form \eqref{d4a} satisfies 
the bound
$$
\abs{(\wf u,v)}\leq 2\pi \norm{f}_{\BMO}\norm{u}_{L^2}\norm{v}_{L^2},
\quad
u,v\in L^\infty_\comp(\bbR).
$$
Thus, $\wf$ extends to a bounded operator on $L^2(\bbR)$. 
Further, one has 
\[
\norm{\wf}=2\pi \norm{f}_{\BMO}. 
\label{d5}
\]
\item
If $f_n\to f$ $*$-weakly in $\BMO$, then $\wf_n\to\wf$ $*$-weakly in $\calB(L^2(\bbR))$. 
\end{enumerate}
\end{lemma}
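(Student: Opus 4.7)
The plan is to exploit the block decomposition \eqref{d4c}, which immediately yields
$$
\norm{\wf} = 2\pi \max\bigl\{\norm{P_+ f P_-}_\calB,\; \norm{P_- f P_+}_\calB\bigr\},
$$
and then to identify each block norm with one of the two functionals appearing in $\norm{f}_\BMO$ via Fefferman's duality. For the upper bound, take $u\in H^2(\bbC_+)$ and $v\in H^2(\bbC_-)$; then $\overline{v}\in H^2(\bbC_+)$ and the product $u\overline{v}$ lies in $H^1(\bbC_+)$ with $\norm{u\overline{v}}_{H^1}\leq \norm{u}_{L^2}\norm{v}_{L^2}$ by Cauchy--Schwarz, so
$$
\abs{(P_-fP_+u,v)}=\abs{T_f(u\overline{v})}\leq \norm{T_f}_{H^1(\bbC_+)^*}\norm{u}_{L^2}\norm{v}_{L^2}.
$$
The symmetric computation with $u\in H^2(\bbC_-)$ and $v\in H^2(\bbC_+)$ produces $\overline{u}v\in H^1(\bbC_+)$ and, after rewriting $\int fu\overline{v}\,dx=\overline{T_{\overline{f}}(\overline{u}v)}$, gives $\norm{P_+fP_-}_\calB \leq \norm{T_{\overline{f}}}_{H^1(\bbC_+)^*}$. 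Combining the two bounds furnishes $\norm{\wf}\leq 2\pi\norm{f}_\BMO$, which is the quantitative estimate claimed for the sesquilinear form in part (i).

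For the matching lower bound in \eqref{d5} I would invoke the weak factorisation of $H^1(\bbC_+)$: every $w\in H^1(\bbC_+)$ decomposes as $w=\sum_k u_k\overline{v_k}$ with $u_k\in H^2(\bbC_+)$, $v_k\in H^2(\bbC_-)$, and $\sum_k \norm{u_k}_{L^2}\norm{v_k}_{L^2}$ comparable to $\norm{w}_{H^1}$. Plugging this into $T_f(w)=\sum_k (P_-fP_+u_k,v_k)$ and taking the supremum over $w$ gives $\norm{T_f}_{H^1(\bbC_+)^*}\leq \norm{P_-fP_+}_\calB$, and likewise for $T_{\overline{f}}$. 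This is the nontrivial half of Nehari's theorem; it is the main technical ingredient, and I would cite it from the Hankel-operator literature rather than reproduce the proof. The specific constant $2\pi$ in \eqref{d5} is locked in by the normalisation built into $\norm{\cdot}_\BMO$ in the preceding subsection.

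For part (ii), rewrite the form \eqref{d4a} on the dense set $u,v\in L^\infty_\comp(\bbR)$ as
$$
(\wf u,v) = 2\pi i\,\overline{T_{\overline{f}}(\overline{u_-}v_+)} - 2\pi i\, T_f(u_+\overline{v_-}),
$$
after noting that both $\overline{u_-}v_+$ and $u_+\overline{v_-}$ are products of two $H^2(\bbC_+)$ boundary values and hence belong to $H^1(\bbC_+)$; the necessary integrability at infinity is supplied by the $\abs{x}^{-1}$ decay of $u_\pm, v_\pm$ already used to justify \eqref{d4a}. The $*$-weak hypothesis says precisely that $T_{f_n}\to T_f$ and $T_{\overline{f_n}}\to T_{\overline{f}}$ weakly on $H^1(\bbC_+)$, so the right-hand side above converges to the corresponding expression for $f$ for each fixed pair $(u,v)$. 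Extension from this dense set to all $u,v\in L^2(\bbR)$ follows from the uniform operator norm bound $\sup_n\norm{\wf_n}\leq 2\pi\sup_n\norm{f_n}_\BMO<\infty$, finite by Banach--Steinhaus applied to the weakly convergent sequences of functionals on $H^1(\bbC_+)$, combined with a standard three-$\eps$ approximation.

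The main obstacle is the lower bound in part (i): the Nehari-type factorisation $H^1 = H^2\cdot H^2$ is what forces equality, rather than mere inequality, in \eqref{d5}. Once this classical fact is imported, the remainder reduces to Fefferman duality together with a density argument for $*$-weak operator convergence, both of which are essentially bookkeeping.
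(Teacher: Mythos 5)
Your argument follows the same route as the paper: the block decomposition \eqref{d4c}, the identification $(fu_+,v_-)=T_f(u_+\overline{v_-})$ via Fefferman duality for the upper bound, and factorisation of $H^1$ for the lower bound; part (ii) is likewise the paper's argument (the paper is in fact less explicit than you are about the uniform bound needed to pass from the dense set to all of $L^2$, so your extra care there is welcome).

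The one point to fix is the lower bound. You invoke \emph{weak} factorisation, $w=\sum_k u_k\overline{v_k}$ with $\sum_k\norm{u_k}_{L^2}\norm{v_k}_{L^2}$ merely \emph{comparable} to $\norm{w}_{H^1}$. That only yields $\norm{T_f}_{H^1(\bbC_+)^*}\leq C\,\norm{P_-fP_+}_\calB$ with the comparability constant $C$, hence a two-sided bound rather than the exact equality asserted in \eqref{d5} --- and the exact constant matters later (it is what makes Theorem~\ref{thm.sharp} an equality). What is actually needed, and what the paper uses, is the \emph{strong} Riesz factorisation: every $w\in H^1(\bbC_+)$ is a single product $w=u_+\overline{v_-}$ with $\norm{w}_{H^1}=\norm{u_+}_{L^2}\norm{v_-}_{L^2}$ exactly (inner--outer factorisation; \cite[Exercise II.1]{Garnett}). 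This is considerably more elementary than Nehari's theorem --- it is not the ``nontrivial half'' of anything, and no deep Hankel-operator input is required at this stage. With that substitution your proof closes the gap and coincides with the paper's.
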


\begin{proof} 
Let us first consider the quadratic form 
$$
(fu_+,v_-)
=
\int_{-\infty}^\infty f(x) u_+(x)\overline{v_-(x)}dx
$$
for $u,v\in L^\infty_\comp(\bbR)$. As already discussed, the integral here converges absolutely. 
Further, since $u_+,\overline{v_-}\in H^2(\bbC_+)$,
we have $u_+\overline{v_-}\in H^1(\bbC_+)$ and so 
$$
(fu_+,v_-)=T_f(u_+\overline{v_-}).
$$
It follows that 
$$
\abs{(fu_+,v_-)}=\abs{T_f(u_+\overline{v_-})}
\leq
\norm{T_f}_{H^1(\bbC_+)^*}\norm{u_+\overline{v_-}}_{H^1(\bbC_+)}
\leq
\norm{T_f}_{H^1(\bbC_+)^*}\norm{u}_{L^2}\norm{v}_{L^2},
$$
which can be written as
$$
\norm{P_-fP_+}\leq \norm{T_f}_{H^1(\bbC_+)^*}.
$$
Further, since (see e.g. \cite[Exercise II.1]{Garnett}) any function in $H^1(\bbC_+)$ can be represented as
$u_+\overline{v_-}$  with 
$$
\norm{u_+\overline{v_-}}_{H^1(\bbC_+)}=\norm{u_+}_{L^2}\norm{v_-}_{L^2},
$$ 
it is easy to see that in fact we have the equality of the norms,
$$
\norm{P_-fP_+}=\norm{T_f}_{H^1(\bbC_+)^*}.
$$
Similarly, 
$$
(fu_-,v_+)=\overline{T_{\overline{f}}(v_+\overline{u_-})}
\text{ and }
\norm{P_+fP_-}= \norm{T_{\overline{f}}}_{H^1(\bbC_+)^*}.
$$
Now by  \eqref{d4c} we obtain
$$
\frac1{2\pi}\norm{\wf}
=
\max \{\norm{P_-fP_+},\norm{P_+fP_-}\}
=
\max\{\norm{T_f}_{H^1(\bbC_+)^*}, \norm{T_{\overline{f}}}_{H^1(\bbC_+)^*}\}
=
\norm{f}_{\BMO},
$$
according to our definition of the BMO norm.

This argument also shows that if $f_n\to0$ $*$-weakly
in BMO, then 
$$
(f_nu_+,v_-)=T_{f_n}(u_+\overline{v_-})\to0
$$
and
$$
(f_nu_-,v_+)=\overline{T_{\overline{f_n}}(v_+\overline{u_-})}\to0,
$$
which yields (ii).
\end{proof}

\begin{lemma}\label{lma.d3}
If $f\in\calR$, then the operator $\wf$ has a finite rank.
If $f\in\CMO(\bbR)$, then the operator $\wf$ is compact. 
\end{lemma}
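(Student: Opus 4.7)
The plan is to establish (i) by a direct partial fractions computation, and then to deduce (ii) by density using the operator norm identity \eqref{d5}.

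For (i), fix $f\in\calR$. Partial fraction decomposition expresses $f$ as a constant plus a finite linear combination of elementary terms $g_{z,j}(x)=(x-z)^{-j}$ with $z\in\bbC\setminus\bbR$ and integer $j\geq1$. Since constants contribute zero to the divided difference and the map $f\mapsto\wf$ is linear, it suffices to show that each $\widecheck{g_{z,j}}$ is a finite rank operator on $L^2(\bbR)$. Using the elementary factorisation $(x-z)^j-(y-z)^j=(x-y)\sum_{k=0}^{j-1}(x-z)^{j-1-k}(y-z)^k$, a short computation yields
$$
\widecheck{g_{z,j}}(x,y)
=-\sum_{k=0}^{j-1}\frac{1}{(x-z)^{k+1}(y-z)^{j-k}},
$$
which is a sum of $j$ separable (hence rank-one) kernels. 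Since $z\notin\bbR$, each function $(x-z)^{-p}$ with integer $p\geq1$ lies in $L^2(\bbR)$, so each summand defines a bounded (indeed Hilbert--Schmidt) rank-one operator on $L^2(\bbR)$. Hence $\widecheck{g_{z,j}}$ has rank at most $j$, and summing over the finitely many partial fraction contributions shows that $\wf$ has finite rank.

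For (ii), let $f\in\CMO(\bbR)$. By the definition of $\CMO(\bbR)$ as the $\BMO$-closure of $\calR$, there is a sequence $\{f_n\}\subset\calR$ with $\norm{f-f_n}_{\BMO}\to0$. By part (i) each $\widecheck{f_n}$ has finite rank and is therefore compact. Applying the norm identity \eqref{d5} of Lemma~\ref{lma.d2}(i) to the difference $f-f_n$ gives
$$
\norm{\wf-\widecheck{f_n}}=2\pi\norm{f-f_n}_{\BMO}\longrightarrow 0
$$
as $n\to\infty$. Thus $\wf$ is a uniform operator norm limit of compact (finite rank) operators, and is therefore itself compact.

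The only substantive step is the computation in (i), and even there the essential point is benign: the partial fractions expansion produces a finite sum of \emph{separable} kernels whose factors lie in $L^2(\bbR)$, the $L^2$ integrability being guaranteed precisely by the hypothesis that all poles of $f$ avoid the real axis. Part (ii) is then purely formal, requiring only the isometric character of the map $f\mapsto(2\pi)^{-1}\wf$ from $\BMO(\bbR)$ to $\calB(L^2(\bbR))$ already recorded in \eqref{d5}.
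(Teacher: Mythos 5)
Your proof is correct and follows essentially the same route as the paper: partial fractions reduces to elementary poles, the divided difference of each elementary term is a finite sum of separable kernels with $L^2(\bbR)$ factors, and compactness for $f\in\CMO(\bbR)$ then follows from the norm identity \eqref{d5} by approximating with rational functions. The only (cosmetic) difference is that for higher-order poles you expand $\widecheck{g_{z,j}}$ explicitly via the algebraic identity $a^j-b^j=(a-b)\sum a^{j-1-k}b^k$, whereas the paper obtains the same conclusion by differentiating the rank-one kernel for a simple pole $m$ times with respect to $z_0$.
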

\begin{proof}
Let $f(x)=(x-z_0)^{-1}$, $\Im z_0\not=0$. 
Then 
\[
\wf(x,y)=\frac{(x-z_0)^{-1}-(y-z_0)^{-1}}{x-y}=-(x-z_0)^{-1}(y-z_0)^{-1}
\label{d7}
\]
so $\wf$ is a rank one operator. Differentiating \eqref{d7} 
$m$ times with respect to $z_0$, one checks that $\wf$ is finite rank for $f(x)=(x-z_0)^{-m-1}$. 
By partial fraction decomposition, we get that $\wf$ is finite rank 
for any rational $f$. 

Now let $f\in\CMO(\bbR)$; approximating $f$ by rational functions in $\BMO$ norm,
we obtain in view of \eqref{d5} an approximation of $\wf$ by finite rank operators in the operator norm.
Thus, $\wf$ is compact. 
\end{proof}

\subsection{Schatten class properties of $\wf$}
Below we state Peller's characterisation of  Hankel operators of Schatten class in a form convenient for us.
For the proofs and the history, see \cite[Chapter 6]{Peller}.
\begin{proposition}\cite[Theorem 6.7.4]{Peller}\label{prp.d4}
For any $0<p<\infty$, there exist constants $c_1(p)<C_1(p)$ such that 
for all $f\in B_{p,p}^{1/p}(\bbR)\cap\BMO(\bbR)$, 
\[
c_1(p)^p\norm{f}_{B_{p,p}^{1/p}}^p
\leq 
\norm{H(f)}_{p}^p+\norm{H(\overline{f})}_{p}^p
\leq 
C_1(p)^p\norm{f}_{B_{p,p}^{1/p}}^p.
\label{d8}
\]
\end{proposition}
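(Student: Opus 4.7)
The plan is to prove both inequalities using the Littlewood-Paley decomposition underlying the Besov norm \eqref{A1}. First I would reduce the half-plane setting to the unit disk via a Cayley transform: a Hankel operator $H(f)$ on $H^2(\bbC_+)$ is unitarily equivalent to a classical Hankel matrix on $H^2(\bbD)$, and the Besov norm transforms in a controllable way, so it suffices to treat the disk case where one can work directly with Taylor/Fourier coefficients.

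For the upper bound, decompose $f=\sum_{j\in\bbZ} f_j$ with $f_j=f*\wh w_j$, so that $f_j$ has Fourier transform supported in $\{2^{j-1}\leq\abs{\xi}\leq 2^{j+1}\}$. The Hankel matrix of $f_j$ is then essentially supported in a single dyadic block of size $2^j\times 2^j$, so one can bound $\norm{H(f_j)}_p\leq C\cdot 2^{j/p}\norm{f_j}_{L^p}$ by combining a Hilbert-Schmidt calculation on that block with the effective rank bound. Summing and using either the triangle inequality (for $p\geq 1$) or the Rotfeld-type inequality \eqref{b7aa} (for $0<p<1$) gives
\[
\norm{H(f)}_p^p \leq C\sum_{j\in\bbZ} 2^j\norm{f_j}_{L^p}^p \leq C'\norm{f}^p_{B^{1/p}_{p,p}}.
\]

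For the lower bound, fix $j$ and construct essentially orthonormal families in $H^2$ localized in frequency at scale $2^j$ on which $H(f_j)$ acts close to multiplication by $f_j$. Testing the Schatten-$p$ norm against such packets (via the $\Sch_p$-duality pairing when $p\geq 1$) isolates the local contribution $2^j\norm{f_j}^p_{L^p}$, the interaction between distinct dyadic blocks being controlled by their frequency separation. Summing over $j$ yields the matching lower bound; the parallel role of $H(f)$ and $H(\overline f)$ reflects that each Hankel operator alone only sees one half of the Fourier spectrum of $f$, so both are needed to control $\norm{f}_{B^{1/p}_{p,p}}$.

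The hard part is the case $0<p<1$. Here the triangle inequality and standard duality both fail, and the naive upper-bound computation is far too weak. The standard route is an atomic decomposition of $B^{1/p}_{p,p}$: one writes $f=\sum_k\lambda_k a_k$ where each atom $a_k$ produces a Hankel operator of essentially unit $\Sch_p$-norm and $\{\lambda_k\}\in\ell^p$, and then reassembles using \eqref{b7aa}. For the reverse direction one invokes Adamyan-Arov-Krein type estimates for the singular numbers of Hankel operators, converting Schatten-$p$ information on $H(f)$ into rational approximation rates for $f$ which in turn control $\norm{f}_{B^{1/p}_{p,p}}$. Balancing these two decompositions so that the constants match on both sides is where the analysis is most delicate.
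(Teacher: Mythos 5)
The paper does not prove this proposition: it is cited verbatim from Peller's monograph as Theorem~6.7.4, and the remark immediately after it even says that the two-sided bound with explicit constants ``is obtained in the proof of that theorem.'' So there is no in-paper argument to compare your sketch against; what you are attempting to reconstruct is essentially the content of a full chapter of Peller's book.

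Beyond that, your sketch of the upper bound has a concrete gap. You want the per-block estimate $\norm{H(f_j)}_p\lesssim 2^{j/p}\norm{f_j}_{L^p}$, i.e.\ $\norm{H(f_j)}_p^p\lesssim N\norm{f_j}_{L^p}^p$ with $N=2^j$, and you propose to get it from ``a Hilbert--Schmidt calculation on that block with the effective rank bound.'' That combination does not give the right power. With $\wh f_j$ supported in $[N,2N]$ one has $\norm{H(f_j)}_2^2\asymp N\norm{f_j}_{L^2}^2$ and $\rank H(f_j)\lesssim N$, so the rank inequality $\norm{A}_p^p\leq(\rank A)^{1-p/2}\norm{A}_2^p$ (for $0<p\leq2$) gives only $\norm{H(f_j)}_p^p\lesssim N\norm{f_j}_{L^2}^p$. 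Since $\norm{f_j}_{L^2}$ can be much larger than $\norm{f_j}_{L^p}$ for $p<2$ (already for the Dirichlet kernel $\norm{\cdot}_{L^2}/\norm{\cdot}_{L^1}\sim N^{1/2}/\log N$), this is strictly weaker than what you need; the same exponent mismatch appears if one interpolates between $\Sch_2$ and $\Sch_\infty$ for $p>2$. The actual per-block lemma in Peller is sharper than rank-plus-Hilbert--Schmidt and requires a more careful argument (for $p\geq1$ a duality/trace computation against suitably chosen finite-rank operators, and for $0<p<1$ a genuinely different route through rational approximation and Adamyan--Arov--Krein estimates, which you correctly flag as the hard case). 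Your lower-bound ``test against localized wave packets'' sketch is likewise several steps away from a proof; isolating one dyadic block inside a Schatten quasi-norm, especially for $p<1$, is exactly where the substance of Peller's theorem lies. Given that the authors simply invoke Peller here, it would be both safer and more in the spirit of the paper to cite the result rather than to attempt a from-scratch proof.
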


\begin{remark*}
Of course, the constants $C_1(p)$ and $c_1(p)$ depend on the choice
of the functional $\norm{\cdot}_{B_{p,p}^{1/p}}$ in $B_{p,p}^{1/p}(\bbR)$. 
The bounds \eqref{d8} are not explicitly stated in \cite[Theorem 6.7.4]{Peller}, but
are obtained in the  proof of that theorem. 
\end{remark*}

\begin{lemma}\label{lma.d4}
For any $0<p<\infty$, one has
$$
(2\pi)^{-p}\norm{\wf}_p^p=\norm{H(f)}_{p}^p+\norm{H(\overline{f})}_{p}^p,
\quad 
f\in B_{p,p}^{1/p}(\bbR).
$$
Thus, we have the estimates
$$
c_1(p)\norm{f}_{B_{p,p}^{1/p}}
\leq 
(2\pi)^{-1}
\norm{\wf}_p
\leq 
C_1(p)\norm{f}_{B_{p,p}^{1/p}},
\quad 
f\in B_{p,p}^{1/p}(\bbR),
$$
with the constants as in Proposition~\ref{prp.d4}.
\end{lemma}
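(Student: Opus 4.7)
The plan is to reduce everything to the already-noted block decomposition \eqref{d4c} and then invoke Peller's theorem (Proposition~\ref{prp.d4}).

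First I would start from the identity
\[
\frac{1}{2\pi i}\wf=
\begin{pmatrix} 0 & P_+fP_- \\ -P_-fP_+ & 0 \end{pmatrix}
\]
obtained in \eqref{d4c} with respect to the orthogonal decomposition $L^2(\bbR)=H^2(\bbC_+)\oplus H^2(\bbC_-)$. The point is that an operator of the block anti-diagonal form $A=\left(\begin{smallmatrix} 0 & B \\ C & 0 \end{smallmatrix}\right)$ satisfies
\[
A^*A=\begin{pmatrix} C^*C & 0 \\ 0 & B^*B \end{pmatrix},
\]
so the non-zero singular values of $A$ are precisely (with multiplicities) the union of those of $B$ and of $C$. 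Consequently $\norm{A}_p^p=\norm{B}_p^p+\norm{C}_p^p$ for every $0<p<\infty$, and this gives
\[
(2\pi)^{-p}\norm{\wf}_p^p=\norm{P_+fP_-}_p^p+\norm{P_-fP_+}_p^p.
\]

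Next I would identify the two off-diagonal blocks with Hankel operators. The operator $P_-fP_+$ vanishes on $H^2(\bbC_-)$ and agrees with $H(f)$ on $H^2(\bbC_+)$, so $\norm{P_-fP_+}_p=\norm{H(f)}_p$. For the other block I would take adjoints: $(P_+fP_-)^*=P_-\overline{f}P_+$, which by the same reasoning is $H(\overline{f})$ extended by zero, hence $\norm{P_+fP_-}_p=\norm{H(\overline{f})}_p$. Substituting these two identifications into the displayed formula gives the claimed equality
\[
(2\pi)^{-p}\norm{\wf}_p^p=\norm{H(f)}_p^p+\norm{H(\overline{f})}_p^p.
\]

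Finally, the two-sided bound by $\norm{f}_{B_{p,p}^{1/p}}$ is exactly the content of Proposition~\ref{prp.d4}, so inserting the identity just derived yields the Schatten bounds with constants $c_1(p)$ and $C_1(p)$.

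The routine verifications I am glossing over are the justification of \eqref{d4c} as an operator identity on a common dense domain (already done in Lemma~\ref{lma.d2} when $f\in\BMO$) and the observation that both sides of the claimed identity are $+\infty$ simultaneously when $f$ lies outside the Schatten regime, so no extra care is needed beyond what \eqref{d4c} already gives. The only mildly delicate point is the singular-value split for the anti-diagonal block operator; this follows cleanly from computing $A^*A$ as above, but one should verify that the zero entries in $A^*A$ are indeed zero and not just off-diagonal, which is immediate because $P_+$ and $P_-$ are mutually orthogonal projections.
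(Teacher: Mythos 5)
Your proposal is correct and follows essentially the same path as the paper: both start from the block decomposition \eqref{d4c}, use the singular-value split $\norm{X}_p^p=\norm{A}_p^p+\norm{B}_p^p$ for anti-diagonal block operators, identify the off-diagonal blocks with Hankel operators (the paper writes the upper-right block as $(P_-\overline f P_+)^*$, which is exactly your $P_+fP_-$ after taking the adjoint), and then invoke Proposition~\ref{prp.d4}.
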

\begin{proof}
By \eqref{d4c}, we have 
$$
\frac1{2\pi i}\wf
=
\begin{pmatrix}
0 & (P_- \overline{f} P_+)^*
\\
-P_- fP_+ & 0
\end{pmatrix}
\text{ in $L^2(\bbR)=\Ran P_+\oplus \Ran P_-$.}
$$
Now the required result follows from the fact that (by a simple calculation) 
$$ 
\norm{X}_p^p=\norm{A}_{p}^p+\norm{B}_p^p
\quad \text{ for }
X=
\begin{pmatrix}
0&A\\
B&0
\end{pmatrix}.
\qedhere
$$
\end{proof}

\section{The map $f\mapsto D(f)$ }\label{sec.e}

\subsection{Overview}
In this section we put together all the components prepared so far. 
Throughout this section, $H_0$ and $H_1$ are self-adjoint operators in a Hilbert space $\calH$
and $G_0$, $G_1$ are linear operators from $\calH$ to $\calK$ such that 
\[
G_0\in\Smooth(H_0) \quad\text{ and }\quad G_1\in\Smooth(H_1).
\label{ee0a}
\]
We assume that
\[
H_1-H_0=G_1^*G_0
\label{ee0}
\]
in the sense to be made precise later. 
We consider the map $f\mapsto D(f)$ in an abstract fashion,
as a linear map from some function spaces to some spaces of operators.
Our aim is to prove Theorems~\ref{thm.a1} and \ref{thm.a2}, which are restated
more precisely as Theorems~\ref{thm.e1} and \ref{thm.e3} below. 
The key step is the use of the Birman-Solomyak formula \eqref{BS},
which allows us to use the results of Sections~\ref{sec.c} and \ref{sec.d}.

\subsection{Preliminaries}

First we should explain that the identity \eqref{ee0} will be understood in the sesquilinear form sense:
\[
(u,H_1v)-(H_0u,v)=(G_0u,G_1v), \quad u\in\Dom(H_0), \quad v\in\Dom(H_1).
\label{e0}
\]
Next, since functions $f\in\BMO(\bbR)$ need not be bounded, the operators $f(H_0)$ and $f(H_1)$
are in general unbounded for such $f$. Thus, the definition of $D(f)$ requires some care. 
Similarly to \eqref{e0}, we define the sesquilinear form of $D(f)$ as follows: 
\[
d_f[u,v]:=(u,\overline{f}(H_1)v)-(f(H_0)u,v), \quad u\in\Dom(f(H_0)), \quad v\in\Dom(f(H_1)). 
\label{e0a}
\]
Obviously, for bounded functions $f$ one can define $D(f)$ directly as a bounded operator on $\calH$ 
and in this case we have
\[
d_f[u,v]=(D(f)u,v), \quad u\in \Dom f(H_0),\quad v\in\Dom f(H_1). 
\label{e0b}
\]
In what follows we will prove that for any $f\in\BMO(\bbR)$ the sesquilinear form $d_f[u,v]$ 
is bounded and therefore \eqref{e0b} holds with some 
bounded operator  $D(f)$ in $\calH$.

We denote
$$
R_0(z)=(H_0-z)^{-1}, \quad R_1(z)=(H_1-z)^{-1}, \quad \Im z\not=0.
$$
We will need the resolvent identity for operators satisfying \eqref{e0}; it can be written in two 
alternative forms:
\begin{align}
R_1(z)-R_0(z)&=-(G_1R_1(\overline{z}))^*G_0R_0(z),
\label{e0c}
\\
R_1(z)-R_0(z)&=-(G_0R_0(\overline{z}))^*G_1R_1(z),
\label{e0d}
\end{align}
for any $\Im z\not=0$. 
  
First we give a simple statement reducing the analysis of $D(f)$ to the absolutely continuous
subspaces of $H_0$ and $H_1$.   
  
\begin{proposition}\label{prp.e1}
Assume \eqref{ee0a} and  \eqref{e0}. 
Then for the quadratic form $d_f$, defined by \eqref{e0b}, we have 
$d_f[u,v]=0$ if $u\in \calH^{(\sing)}(H_0)\cap \Dom (f(H_0))$ or
$v\in \calH^{(\sing)}(H_1)\cap \Dom (f(H_1))$ (or both). 
\end{proposition}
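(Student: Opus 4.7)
The plan is to show that on singular subspace vectors, the operators $H_0$ and $H_1$ generate the same spectral measure, so the difference $f(H_1)-f(H_0)$ acts trivially. The key input is the characterisation, recalled before Theorem~\ref{thm.b1}, that any $G\in\Smooth(H)$ annihilates $\calH^{(\sing)}(H)$.

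First I would treat the case $u\in\calH^{(\sing)}(H_0)\cap\Dom(f(H_0))$. Fix $z$ with $\Im z\neq 0$. Since spectral subspaces are reducing for $H_0$, we have $R_0(z)u\in\calH^{(\sing)}(H_0)\cap\Dom(H_0)$, and $G_0\in\Smooth(H_0)$ vanishes on $\calH^{(\sing)}(H_0)$, so $G_0 R_0(z)u=0$. Plugging into the resolvent identity \eqref{e0c} gives
\[
R_1(z)u=R_0(z)u\qquad\text{for every }z\in\bbC\setminus\bbR.
\]
Applying the Stone formula to both sides yields $E_{H_1}(\Lambda)u=E_{H_0}(\Lambda)u$ for every bounded Borel set $\Lambda\subset\bbR$, and hence for every Borel $\Lambda$. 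Consequently $u\in\Dom(f(H_1))$ as soon as $u\in\Dom(f(H_0))$, and $f(H_1)u=f(H_0)u$. Since $v\in\Dom(f(H_1))=\Dom(\overline f(H_1))$, we obtain
\[
d_f[u,v]=(u,\overline f(H_1)v)-(f(H_0)u,v)=(f(H_1)u,v)-(f(H_0)u,v)=0.
\]

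The case $v\in\calH^{(\sing)}(H_1)\cap\Dom(f(H_1))$ is handled by the symmetric argument: one uses the alternative resolvent identity \eqref{e0d}, the fact that $G_1\in\Smooth(H_1)$ annihilates $\calH^{(\sing)}(H_1)$, to deduce $R_0(z)v=R_1(z)v$ for all non-real $z$, so $E_{H_0}(\Lambda)v=E_{H_1}(\Lambda)v$, and the form vanishes in exactly the same way.

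The only point requiring mild care is ensuring all expressions are defined: $G_0R_0(z)u$ makes sense because $R_0(z)u\in\Dom(H_0)\subset\Dom(G_0)$, and the transfer from equality of resolvents on $u$ to equality of spectral projections on $u$ via Stone's formula works because the identity $R_0(z)u=R_1(z)u$ holds pointwise (no norm bound on $u$ is needed). I do not expect any serious obstacle; this is essentially a bookkeeping argument built on the $\Smooth(H)$-vanishing property on $\calH^{(\sing)}(H)$ and the two resolvent identities already recorded as \eqref{e0c}--\eqref{e0d}.
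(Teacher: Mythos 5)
Your argument is correct and follows exactly the paper's route: from $G_0\in\Smooth(H_0)$ you get $G_0R_0(z)u=0$ on $\calH^{(\sing)}(H_0)$, hence $R_1(z)u=R_0(z)u$ by the resolvent identity \eqref{e0c}, then Stone's formula transfers the spectral measures and the form $d_f$ vanishes, with the symmetric case using \eqref{e0d}. Nothing is missing; this matches the paper's proof in both structure and detail.
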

\begin{proof}
Suppose $u\in\calH^{(\sing)}(H_0)$; then for any $\Im z\not=0$ we have
$GR_0(z)u=0$ and therefore, by the resolvent identity \eqref{e0c}, 
$$
R_1(z)u=R_0(z)u. 
$$
By Stone's formula \cite[Theorem VII.13]{ReSi}, this implies that the corresponding 
two spectral measures coincide on $u$: 
$$
E_{H_0}(\Delta)u=E_{H_1}(\Delta)u, \quad \forall \Delta\subset \bbR. 
$$
It follows that 
$$
(f(H_0)u,v)=(u,\overline{f}(H_1)v)
$$
whenever both sides are well-defined, i.e. whenever $u\in \Dom f(H_0)$ and $v\in \Dom f(H_1)$. 
This is the equality $d_f[u,v]=0$ written in a different form. 

The case $v\in \calH^{(\sing)}(H_1)$ is considered in the same way, by using 
the resolvent identity in the form \eqref{e0d}. 
\end{proof}

The above proposition is well known in scattering theory as the statement that under the assumptions
\eqref{ee0a}, \eqref{ee0}, the singular parts of $H_0$ and $H_1$ coincide. 
As a consequence of this proposition,  when dealing with the sesquilinear from $d_f[u,v]$, 
it suffices to consider $u\in\calH^{\ac}(H_0)$ and $v\in\calH^{\ac}(H_1)$. 
In fact, the argument of Proposition~\ref{prp.e1} also shows that these absolutely 
continuous subspaces coincide: $\calH^{\ac}(H_0)=\calH^{\ac}(H_1)$, although we will not need this.

\subsection{The norm bound for $D(f)$}
\begin{theorem}\label{thm.e1}
For any $f\in\BMO(\bbR)$ and for dense sets of $u\in\calH^{\ac}(H_0)$, $v\in\calH^{\ac}(H_1)$, 
the form \eqref{e0a} satisfies the bound
\[
\abs{d_f[u,v]}\leq 2\pi A\norm{f}_{\BMO}\norm{u}_\calH\norm{v}_\calH, 
\label{e1b}
\]
where $A$ is the constant \eqref{c3a}.
Thus, the form $d_f[u,v]$ corresponds to a bounded operator $D(f)$ on $\calH$
in the sense of \eqref{e0b}, 
and $D(f)$ satisfies the norm bound 
$$
\norm{D(f)}_\calB\leq 2\pi A\norm{f}_{\BMO(\bbR)}.
$$
If $f_n\to f$ $*$-weakly in $\BMO(\bbR)$, then $D(f_n)\to D(f)$ $*$-weakly in $\calB(\calH)$.
\end{theorem}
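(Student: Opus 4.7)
The plan is to identify $D(f)$ with the double operator integral $\DOI(\wf)$ constructed in Section~\ref{sec.c}, and then chain the two norm estimates already at hand --- \eqref{c2} and Lemma~\ref{lma.d2}(i) --- to obtain
$$
\norm{D(f)}_\calB = \norm{\DOI(\wf)}_\calB \leq A\norm{\wf}_\calB = 2\pi A\norm{f}_{\BMO}.
$$
By Proposition~\ref{prp.e1}, it suffices to test the form $d_f[u,v]$ on the dense subsets $L^\infty_\comp(H_0)\subset\calH^\ac(H_0)$ and $L^\infty_\comp(H_1)\subset\calH^\ac(H_1)$. For such $u$ and $v$, the spectral measures (w.r.t.\ $H_0$ and $H_1$ respectively) have bounded and compactly supported densities, so $f(H_0)u$ and $f(H_1)v$ are well-defined elements of $\calH$ for any $f\in L^2_\loc(\bbR)$, in particular for any $f\in\BMO(\bbR)$, and $d_f[u,v]$ is meaningful.

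I would first verify the Birman-Solomyak identity $d_f[u,v]=(\DOI(\wf)u,v)$ for $f\in\calR$. For the building block $f_z(x)=(x-z)^{-1}$, $\Im z\neq 0$, one computes $\wf_z(x,y)=-(x-z)^{-1}(y-z)^{-1}$, a rank-one kernel, so definition \eqref{c5} yields $\DOI(\wf_z)=-R_1(z)G_1^*G_0R_0(z)$, which by the resolvent identity \eqref{e0c} equals $R_1(z)-R_0(z)=D(f_z)$. Differentiation in $z$ and partial fractions extend the identity $D(f)=\DOI(\wf)$ to all $f\in\calR$, and combining this with \eqref{c2} and Lemma~\ref{lma.d2}(i) gives the bound \eqref{e1b} for every $f\in\calR$.

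For general $f\in\BMO(\bbR)$, Lemma~\ref{lma.d1} produces a sequence $\{f_n\}\subset\calR$ with $f_n\to f$ $*$-weakly in $\BMO$ and, by the uniform boundedness principle, $\sup_n\norm{f_n}_{\BMO}<\infty$. Lemma~\ref{lma.d2}(ii) gives $\wf_n\to\wf$ $*$-weakly in $\calB(L^2(\bbR))$, and then the $*$-weak continuity of $\DOI$ on bounded sets (Lemma~\ref{lma.c3} together with the $*$-weak continuous extension used to prove Lemma~\ref{lma.c4}) yields $\DOI(\wf_n)\to\DOI(\wf)$ $*$-weakly in $\calB(\calH)$. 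Setting $D(f):=\DOI(\wf)$ then gives a bounded operator satisfying the claimed norm bound (by lower semicontinuity of $\norm{\cdot}_\calB$ under $*$-weak limits), and the final $*$-weak convergence statement $D(f_n)\to D(f)$ in $\calB(\calH)$ follows from the same chain applied to any $*$-weakly convergent sequence in $\BMO$.

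The main technical obstacle is to verify that this $D(f)$ genuinely represents the form $d_f$, i.e.\ that $d_f[u,v]=(D(f)u,v)$ on the dense subset above for arbitrary $f\in\BMO(\bbR)$ --- the point being that $*$-weak convergence in $\BMO$ need not imply $d_{f_n}[u,v]\to d_f[u,v]$ against arbitrary a.c.\ measures. I would handle this by a spectral cutoff: take $\chi\in C_c^\infty(\bbR)$ equal to $1$ on the (compact) union of the spectral supports of $u$ w.r.t.\ $H_0$ and of $v$ w.r.t.\ $H_1$, so that $d_f[u,v]=d_{f\chi}[u,v]$ and $\widecheck{f\chi}$ differs from $\wf$ only in the ranges of $dE_{H_0}$ and $dE_{H_1}$ on which the form is insensitive; one then approximates $f\chi$ --- which lies in $L^p(\bbR)$ for every $p<\infty$ and is compactly supported --- by rationals in a stronger norm (say $L^p$ for $p$ large), passing the identity $d_g[u,v]=(\DOI(\widecheck g)u,v)$ from $g\in\calR$ to $g=f\chi$ by dominated convergence on the left side and by the bound \eqref{c2} applied to the kernels on the right.
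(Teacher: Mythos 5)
Your proposal attempts to prove Theorem~\ref{thm.e1} \emph{through} the Birman--Solomyak identity $D(f)=\DOI(\wf)$, whereas the paper's proof is direct and logically precedes Theorem~\ref{thm.e2} (indeed, the paper's proof of Theorem~\ref{thm.e2} \emph{uses} the $*$-weak continuity statement of Theorem~\ref{thm.e1}). Your route is legitimate in principle, and the passage from rationals to a $*$-weak limit $\DOI(\wf)$ together with the resulting norm bound is correct as far as it goes. The genuine gap is in the final step, which you yourself flag as ``the main technical obstacle'': showing that the operator $\DOI(\wf)$ actually represents the form $d_f$ for general $f\in\BMO(\bbR)$. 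The cutoff argument you sketch does not close this gap. After replacing $f$ by $g=f\chi$, you propose to approximate $g$ by rationals $g_n$ in $L^p$ and pass to the limit on both sides. The left side $d_{g_n}[u,v]$ does converge (the spectral densities of $u,v$ are bounded and compactly supported). But on the right side you invoke \eqref{c2}, which controls $\norm{\DOI(a)}_\calB$ by $\norm{a}_\calB$; to use it you would need $\norm{\widecheck{g_n}-\widecheck{g}}_\calB\to0$, and by Lemma~\ref{lma.d2}(i) this is equivalent to $\norm{g_n-g}_{\BMO}\to0$. Convergence in $L^p(-R,R)$, for any finite $p$, does not imply convergence in $\BMO$, and rational approximation of $g$ in the $\BMO$ norm is also unavailable: $g=f\chi$ need not lie in $\CMO(\bbR)$ (take $f(x)=\log\abs{x}$, so that $g=\chi\log\abs{x}$ retains the logarithmic singularity, which obstructs VMO-type approximation). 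Neither can one pass to the limit by integrating the kernels $\widecheck{g_n}(x,y)$ against the trace-class kernel $K(x,y)$ of Lemma~\ref{lma.c3}: the divided difference is singular on the diagonal and $K$ need not vanish there, so the pairing is not an absolutely convergent integral and the $L^p$ convergence of $g_n$ does not control it.

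The missing ingredient, and the actual content of the paper's proof, is the identity
\[
d_f[u,v]=-\frac{1}{2\pi i}\bigl(T_f(F_{u,v})-\overline{T_{\overline f}(F_{u,v}^*)}\bigr),
\]
obtained for $u\in L^\infty_\comp(H_0)$, $v\in L^\infty_\comp(H_1)$ by writing $f(H_j)$ through the Poisson kernel, applying the resolvent identity \eqref{e0c}, and noting that $F_{u,v}(z)=(G_0R_0(z)u,G_1R_1(\overline z)v)$ and $F_{u,v}^*$ lie in $H^1(\bbC_+)$ \emph{because} of the Kato smoothness bound \eqref{b1}. This representation exhibits $d_f[u,v]$ as a sum of $\BMO$--$H^1$ pairings, from which both the norm bound (via Fefferman duality and an optimization in the Cauchy--Schwarz step) and the $*$-weak continuity of $f\mapsto d_f[u,v]$ follow at once; the latter is exactly what your density argument needs and does not have. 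In short: the difficulty is not that $*$-weak convergence in $\BMO$ is a priori too weak to test against the spectral densities (which are merely bounded and compactly supported), but that those densities are secretly boundary values of $H^1$ functions by Kato smoothness, and recognizing this is the step your proposal omits. I would recommend abandoning the cutoff and instead deriving the $H^1$ representation above; the Birman--Solomyak formula then follows afterwards, as in the paper.
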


\begin{proof}
We will prove the bound \eqref{e1b} 
for all $u\in L^\infty_\comp(H_1)$, $v\in L^\infty_\comp(H_0)$ (see Section~\ref{sec.b1} for the definition of $L^\infty_\comp(H)$). 
Since $L^\infty_\comp(H_j)$ is dense in $\calH^{\ac}(H_j)$, $j=0,1$, this will suffice.

Since $v\in L^\infty_\comp(H_1)$,  the measure $(E_{H_1}(\cdot)u,v)$
is absolutely continuous and the function 
$$
a(\lambda):=\frac{d(E_{H_1}(\lambda)u,v)}{d\lambda}
$$
is in $L^2_\comp(\bbR)$. It follows that (here $P_\eps$ is the Poisson kernel \eqref{b6aa})
\begin{multline*}
(u,\overline{f}(H_1)v)
=
\int_{-\infty}^\infty f(x) a(x) dx
=
\lim_{\eps\to0+}\int_{-\infty}^\infty f(x)(P_\eps*a)(x)dx
\\
=
\frac1{2\pi i}\lim_{\eps\to0+}\int_{-\infty}^\infty f(x) 
\bigl((R_1(x+i\eps)-R_1(x-i\eps))u,v\bigr) dx.
\end{multline*}
Similarly, we obtain
$$
(f(H_0)u,v)
=
\frac1{2\pi i}\lim_{\eps\to0+}
\int_{-\infty}^\infty f(x) 
\bigl((R_0(x+i\eps)-R_0(x-i\eps))u,v\bigr) dx.
$$
Let us subtract the last two identities one from another and use the 
resolvent identity \eqref{e0c}. 
Denoting
$$
F_{u,v}(z)=(G_0R_0(z)u,G_1R_1(\overline{z})v),
\quad 
F_{u,v}^*(z)=\overline{F_{u,v}(\overline{z})},
\quad
\Im z\not=0,
$$
we obtain
\begin{multline*}
d_f[u,v]
=
-\frac1{2\pi i}
\lim_{\eps\to0+}
\int_{-\infty}^\infty f(x) 
(F_{u,v}(x+i\eps)-F_{u,v}(x-i\eps))dx
\\
=
-\frac1{2\pi i}
\lim_{\eps\to0+}
\int_{-\infty}^\infty f(x) 
(F_{u,v}(x+i\eps)
-\overline{F_{u,v}^*(x+i\eps)})dx.
\end{multline*}
By the definition \eqref{b1} of Kato smoothness, 
the functions $F_{u,v}$  and $F_{u,v}^*$ belong to $H^1(\bbC_+)$.
Thus, in notation  \eqref{d3} the previous identity can be written as
\[
d_f[u,v]
=
-\frac1{2\pi i}
\bigl(
T_f(F_{u,v})
-
\overline{T_{\overline{f}}(F^*_{u,v})}
\bigr).
\label{e4}
\]
We have
\begin{multline*}
\int_{-\infty}^\infty \abs{F_{u,v}(x+i\eps)}dx
\leq
\int_{-\infty}^\infty \norm{G_0R_0(x+i\eps)u}\norm{G_1R_1(x-i\eps)v}dx
\\
\leq
\frac{\alpha}2\int_{-\infty}^\infty \norm{G_0R_0(x+i\eps)u}^2 dx
+
\frac1{2\alpha}\int_{-\infty}^\infty \norm{G_1R_1(x-i\eps)v}^2dx,
\end{multline*}
where $\alpha>0$ is a parameter to be chosen later. Similiarly, 
$$
\int_{-\infty}^\infty \abs{F_{u,v}^*(x+i\eps)}dx
\leq
\frac\alpha2\int_{-\infty}^\infty \norm{G_0R_0(x-i\eps)u}^2 dx
+
\frac1{2\alpha}\int_{-\infty}^\infty \norm{G_1R_1(x+i\eps)v}^2dx.
$$
By the definition \eqref{b1} of Kato smoothness,  we get
\begin{multline*}
\int_{-\infty}^\infty(\abs{F_{u,v}(x+i\eps)}+ \abs{F_{u,v}^*(x+i\eps)})dx
\\
\leq
\frac{\alpha}2 (2\pi)^2\norm{G_0}_{\Smooth(H_0)}^2\norm{u}^2
+
\frac1{2\alpha}(2\pi)^2\norm{G_1}_{\Smooth(H_1)}^2\norm{v}^2.
\end{multline*}
Optimising over $\alpha$, we obtain
$$
\int_{-\infty}^\infty(\abs{F_{u,v}(x+i\eps)}+ \abs{F_{u,v}^*(x+i\eps)})dx
\leq
(2\pi)^2 A\norm{u}\norm{v}.
$$
Now coming back to \eqref{e4}, we have
\begin{multline*}
2\pi\abs{d_f[u,v]}
\leq 
\norm{T_f}_{H^1(\bbC_+)^*}
\norm{F_{u,v}}_{H^1(\bbC_+)}
+
\norm{T_{\overline{f}}}_{H^1(\bbC_+)^*}
\norm{F_{u,v}^*}_{H^1(\bbC_+)}
\\
\leq
\norm{f}_{\BMO}
(\norm{F_{u,v}}_{H^1(\bbC_+)}
+
\norm{F_{u,v}^*}_{H^1(\bbC_+)})
\leq
(2\pi)^2 \norm{f}_{\BMO}A\norm{u}\norm{v},
\end{multline*}
as required.

Finally, suppose $f_n\to f$ $*$-weakly in $\BMO$. 
Consider the identity \eqref{e4}. 
It has been proven above for $u\in L^\infty_\comp(H_0)$, $v\in L^\infty_\comp(H_1)$;
but since we already know that $d_f[u,v]$ is bounded, it extends by a limiting argument 
to all $u,v\in\calH$.  
By the definition of $*$-weak convergence in $\BMO(\bbR)$ we deduce from \eqref{e4} that 
$$
(D(f_n)u,v)\to (D(f)u,v), \quad u,v\in\calH,
$$
as required.
\end{proof}

\subsection{Birman--Solomyak formula}
Here we discuss the Birman-Solomyak formula \eqref{BS}. 
As in Section~\ref{sec.c}, we use the shorthand notation $\DOI(a)$, see \eqref{c1}. 
In our framework, the Birman-Solomyak formula becomes
\begin{theorem}\label{thm.e2}
For all $f\in\BMO(\bbR)$, the identity
\[
D(f)=\DOI(\wf)
\label{e5} 
\]
holds true. 
\end{theorem}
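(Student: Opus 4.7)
The plan is to prove \eqref{e5} on a $*$-weakly dense subclass of $\BMO(\bbR)$ and then extend by $*$-weak continuity of both sides of the identity.

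First, I would check the identity for the resolvent functions $f_z(x)=(x-z)^{-1}$, $\Im z\neq 0$. For these, $\wf_z(x,y)=-(x-z)^{-1}(y-z)^{-1}$ (as in the proof of Lemma~\ref{lma.d3}), a rank-one kernel on $L^2(\bbR)$, so definition \eqref{c5} applies directly: writing $\wf_z(x,y)=\psi(x)\overline{\varphi(y)}$ with $\psi(x)=-(x-z)^{-1}$ and $\varphi(y)=(y-\overline z)^{-1}$, one has $\psi(H_1)^{*}=-R_1(\overline z)$ and $\varphi(H_0)^{*}=R_0(z)$, whence
$$
\DOI(\wf_z)=(G_1\psi(H_1)^{*})^{*}G_0\,\varphi(H_0)^{*}=-R_1(z)G_1^{*}G_0R_0(z).
$$
The resolvent identity \eqref{e0c} shows that $D(f_z)=R_1(z)-R_0(z)$ equals exactly the same expression, so \eqref{e5} holds for $f=f_z$.

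Next I would extend to $f(x)=(x-z)^{-m}$ for $m\geq 2$. The algebraic identity $(a^m-b^m)/(a-b)=\sum_{k=0}^{m-1}a^{m-1-k}b^k$ applied with $a=(x-z)^{-1}$, $b=(y-z)^{-1}$, combined with $(a-b)/(x-y)=-ab$, realises the divided difference as a rank-$m$ separable kernel
$$
\wf(x,y)=-\sum_{j=1}^{m}(x-z)^{-j}(y-z)^{-(m+1-j)}.
$$
Applying \eqref{c5} term-by-term yields $\DOI(\wf)=-\sum_{j=1}^{m}R_1(z)^{j}G_1^{*}G_0R_0(z)^{m+1-j}$. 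On the other hand, the telescoping expansion $R_1(z)^m-R_0(z)^m=\sum_{j=1}^{m}R_1(z)^{j-1}(R_1(z)-R_0(z))R_0(z)^{m-j}$ combined with \eqref{e0c} produces exactly the same expression for $D(f)$. Linearity and partial-fraction decomposition then extend \eqref{e5} to every $f\in\calR$ (the constant part contributing $0$ to both sides).

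For an arbitrary $f\in\BMO(\bbR)$, I would choose $f_n\in\calR$ with $f_n\to f$ $*$-weakly in $\BMO$ via Lemma~\ref{lma.d1}. By Theorem~\ref{thm.e1}, $D(f_n)\to D(f)$ $*$-weakly in $\calB(\calH)$; by Lemma~\ref{lma.d2}(ii), $\wf_n\to\wf$ $*$-weakly in $\calB(L^2(\bbR))$, and $\{\wf_n\}$ is bounded in operator norm since the uniform boundedness principle forces $\{\|f_n\|_\BMO\}$ to be bounded. Combined with the $*$-weak continuity of $\DOI$ on norm-bounded subsets of $\calB(L^2(\bbR))$, this yields $\DOI(\wf_n)\to\DOI(\wf)$ $*$-weakly in $\calB(\calH)$, and passing to the limit in $D(f_n)=\DOI(\wf_n)$ closes the argument. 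The main obstacle I anticipate is justifying this last $*$-weak continuity of $\DOI$: Lemma~\ref{lma.c4} only records operator-norm boundedness, but the proof of Lemma~\ref{lma.c3} establishes $(\DOI(a)v_0,v_1)=\Tr(aK)$ with a trace-class $K$ built from $v_0,v_1,g_0,g_1$ and independent of $a$, so this representation extends from finite-rank $a$ to all of $\calB(L^2(\bbR))$ by $*$-weak approximation, and since trace-class operators form the predual of $\calB(L^2(\bbR))$ in the $*$-weak topology, $*$-weak continuity of $\DOI$ on norm-bounded subsets follows at once.
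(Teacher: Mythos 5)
Your proposal is correct and follows the same strategy as the paper: verify \eqref{e5} on $\calR$ by direct computation via the resolvent identity \eqref{e0c}, then pass to general $f\in\BMO(\bbR)$ by $*$-weak approximation using Lemma~\ref{lma.d1}, Theorem~\ref{thm.e1}, Lemma~\ref{lma.d2}(ii) and Lemma~\ref{lma.c3}. The only substantive difference is a point in your favour: for powers of resolvents you use the algebraic telescoping identity rather than the paper's differentiation in $z_0$ (both are fine), and more importantly you explicitly confront the fact that Lemma~\ref{lma.c3} is only stated for finite-rank $a$, whereas the extension step requires $*$-weak continuity of $\DOI$ applied to the (typically not finite-rank) operators $\wf_n$, $\wf$. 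The paper glosses over this by appealing to the $*$-weak-continuous extension of $\DOI$; your observation that the trace-duality identity $(\DOI(a)v_0,v_1)=\Tr(aK_{v_0,v_1})$ with $K_{v_0,v_1}\in\Sch_1$ independent of $a$ persists for all bounded $a$ (and therefore gives $*$-weak sequential continuity of $\DOI$, uniform boundedness supplying the needed norm bound on $\{\wf_n\}$) is exactly the right way to close that gap.
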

\begin{proof}
First let us check \eqref{e5} for $f(x)=(x-z_0)^{-1}$, $\Im z_0\not=0$.
By the resolvent identity \eqref{e0c}, we have
\begin{gather*}
D(f)=R_1(z_0)-R_0(z_0)=-(G_1R_1(\overline{z_0}))^*G_0R_0(z_0),
\\
\wf(x,y)
=
\frac{(x-z_0)^{-1}-(y-z_0)^{-1}}{x-y}
=
-(x-z_0)^{-1}(y-z_0)^{-1},
\end{gather*}
and so, by the definition \eqref{c5} of DOI,
$$
\DOI(\wf)=-(G_1(H_1-\overline{z_0})^{-1})^*G_0(H_0-z_0)^{-1}=D(f),
$$
as claimed. 
Next, if $f(x)=(x-z_0)^{-1-m}$, $m\geq0$, then the required identity follows by 
differentiating $m$ times with respect to $z_0$. 
By partial fraction decomposition, it follows that \eqref{e5} holds true 
for all $f\in\calR$. 

Now let us extend \eqref{e5} to all $f\in\BMO(\bbR)$ by using $*$-weak 
convergence. Rational functions are $*$-weak dense in $\BMO$ by Lemma~\ref{lma.d1}.
The left side of \eqref{e5} is continuous with respect to $*$-weak convergence by 
Theorem~\ref{thm.e1}.
The map $f\mapsto \wf$ is $*$-weak continuous by Lemma~\ref{lma.d2}(ii), 
and the map $\wf\mapsto\DOI(\wf)$ is $*$-weak continuous by Lemma~\ref{lma.c3}
(and because we have defined $\DOI$ to be the $*$-weak continuous extension 
from finite rank operators). 
Thus, \eqref{e5} holds true for all $f\in\BMO(\bbR)$.
\end{proof}

\subsection{Compactness and Schatten class properties of $D(f)$}

\begin{theorem}\label{thm.e2a}
Let $f\in \CMO(\bbR)$ and $G_0\in \Smooth(H_0)$, $G_1\in\Smooth(H_1)$. 
Assume in addition that either $G_0\in\Smooth_\infty(H_0)$ or 
$G_1\in \Smooth_\infty(H_1)$. Then $D(f)$ is compact. 
\end{theorem}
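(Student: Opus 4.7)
The proof should be essentially immediate given the machinery already developed. The plan is to reduce to the double operator integral representation and combine two lemmas already established in the paper.

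First, I would invoke the Birman--Solomyak formula of Theorem~\ref{thm.e2}, which identifies $D(f) = \DOI(\wf)$ for every $f \in \BMO(\bbR)$, and in particular for $f \in \CMO(\bbR) \subset \BMO(\bbR)$. Thus the task reduces to showing that $\DOI(\wf)$ is compact.

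Next, I would use Lemma~\ref{lma.d3}, which states that for $f \in \CMO(\bbR)$ the operator $\wf$ on $L^2(\bbR)$ is compact (it is approximated in operator norm by the finite-rank operators $\widecheck{g}$ with $g \in \calR$, using the bound $\norm{\wf - \widecheck{g}} = 2\pi \norm{f-g}_{\BMO}$ from Lemma~\ref{lma.d2} combined with the density of $\calR$ in $\CMO$ by definition). Therefore $\wf \in \Sch_\infty(L^2(\bbR))$.

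Finally, I would apply Lemma~\ref{lma.c5}: since $\wf \in \Sch_\infty$, $G_0 \in \Smooth(H_0)$, $G_1 \in \Smooth(H_1)$, and at least one of $G_0$, $G_1$ lies in $\Smooth_\infty$ of the corresponding operator, the conclusion of that lemma gives $\DOI(\wf) \in \Sch_\infty$, i.e., $D(f)$ is compact. There is no genuine obstacle here: all hypotheses of the two lemmas are exactly what is assumed in the theorem, so the argument is a direct composition of prior results with no additional estimate needed.
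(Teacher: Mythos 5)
Your proof is correct and follows exactly the paper's argument: reduce to $\DOI(\wf)$ via Theorem~\ref{thm.e2}, obtain compactness of $\wf$ from Lemma~\ref{lma.d3}, and conclude by Lemma~\ref{lma.c5}. There is nothing to add.
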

\begin{proof}
By Theorem~\ref{thm.e2}, it suffices to check that $\DOI(\wf)$ is compact. 
Here $\wf$ is compact by Lemma~\ref{lma.d3}.
Now the result follows from Lemma~\ref{lma.c5}.  
\end{proof}

Finally, we can prove our main result 
for Schatten classes, which is Theorem~\ref{thm.a2}. We state it again for convenience:

\begin{theorem}\label{thm.e3}
Let $p$, $q$, $r$ be finite positive indices satisfying $\frac1p=\frac1q+\frac1r$.
Let $G_0\in\Smooth_q(H_0)$ and $G_1\in\Smooth_r(H_1)$.
Then for any $f\in B_{p,p}^{1/p}(\bbR)\cap\BMO(\bbR)$, we have $D(f)\in\Sch_p$ and 
$$
\norm{D(f)}_{p}
\leq
(2\pi)
C_1(p)
A_{q,r}
\norm{f}_{B_{p,p}^{1/p}(\bbR)},
$$
where $C_1(p)$ is the constant from \eqref{d8} and $A_{q,r}$ is the constant from \eqref{c3a}. 
This extends to $q=\infty$ (resp. $r=\infty$), if the class $\Smooth_q(H_0)$ (resp. $\Smooth_r(H_1)$)
is replaced by $\Smooth(H_0)$ (resp. $\Smooth(H_1)$). 
\end{theorem}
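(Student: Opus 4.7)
The plan is to combine three ingredients already established in the paper: the Birman--Solomyak formula (Theorem~\ref{thm.e2}), the Schatten norm bound for the double operator integral (Theorem~\ref{thm.c6}), and the Besov-to-Schatten bound for the divided-difference operator (Lemma~\ref{lma.d4}). All the hard work has been done, so the argument is essentially a composition.

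First, since $f\in\BMO(\bbR)$ and the smoothness assumptions in particular imply $G_0\in\Smooth(H_0)$, $G_1\in\Smooth(H_1)$, Theorem~\ref{thm.e2} applies and gives the identity
$$
D(f)=\DOI(\wf),
$$
where $\wf$ is the bounded operator on $L^2(\bbR)$ with integral kernel $(f(x)-f(y))/(x-y)$; the boundedness is ensured by Lemma~\ref{lma.d2}(i). Second, the additional assumption $f\in B_{p,p}^{1/p}(\bbR)$ together with Lemma~\ref{lma.d4} upgrades this to $\wf\in\Sch_p$ with the quantitative bound
$$
\norm{\wf}_p\leq 2\pi\, C_1(p)\,\norm{f}_{B_{p,p}^{1/p}(\bbR)}.
$$

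Third, I would feed this into the Schatten-norm version of the DOI estimate. Taking $a=\wf$ in Theorem~\ref{thm.c6}, with $\tfrac1p=\tfrac1q+\tfrac1r$ and the smoothness hypotheses $G_0\in\Smooth_q(H_0)$, $G_1\in\Smooth_r(H_1)$, one obtains $\DOI(\wf)\in\Sch_p$ together with
$$
\norm{\DOI(\wf)}_p\leq A_{q,r}\,\norm{\wf}_p.
$$
Chaining these two bounds with the Birman--Solomyak identity yields
$$
\norm{D(f)}_p=\norm{\DOI(\wf)}_p\leq 2\pi\,C_1(p)\,A_{q,r}\,\norm{f}_{B_{p,p}^{1/p}(\bbR)},
$$
which is exactly the claim with the advertised constants $C_1(p)$ from \eqref{d8} and $A_{q,r}$ from \eqref{c3a}.

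There is no serious obstacle, since the technical difficulties were absorbed earlier: the Schatten-class interpolation (Rotfeld inequality for $0<p<2$, Hadamard three-lines and Lemma~\ref{lma.b6} for $p\geq2$) occurs inside the proof of Theorem~\ref{thm.c6}, and the Hankel-operator characterisation of Besov classes (Proposition~\ref{prp.d4}) is used in Lemma~\ref{lma.d4}. The only minor point to address is the extension to $q=\infty$ or $r=\infty$: in that case the corresponding factor $\norm{G_j}_{\Smooth_{\cdot}(H_j)}$ is replaced by $\norm{G_j}_{\Smooth(H_j)}$ in the definition of $A_{q,r}$, and the extension is immediate from the analogous extension built into Theorem~\ref{thm.c6}, so the same chain of inequalities goes through verbatim.
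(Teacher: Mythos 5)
Your argument is correct and coincides with the paper's own proof: both chain the Birman--Solomyak identity (Theorem~\ref{thm.e2}), the Schatten DOI bound (Theorem~\ref{thm.c6}), and the Besov-to-Schatten estimate for $\wf$ (Lemma~\ref{lma.d4}) in exactly the same way. No further comment is needed.
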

\begin{proof}
By Theorem~\ref{thm.e2}, Theorem~\ref{thm.c6} and Lemma~\ref{lma.d4}, we have
$$
\norm{D(f)}_p
=
\norm{\DOI(\wf)}_p
\leq 
A_{q,r} \norm{\wf}_p
\leq 
A_{q,r} (2\pi) 
C_1(p)\norm{f}_{B_{p,p}^{1/p}(\bbR)}.
\qedhere
$$
\end{proof}

\section{Sharpness and some extensions}\label{sec.g}

This section contains some additional information. We demonstrate the 
sharpness of our main result and give some extensions.

\subsection{Sharpness of estimates}
Here we construct a pair of self-adjoint operators $H_0$, $H_1$ in $L^2(\bbR)$ 
such that the estimates from Theorems~\ref{thm.a1} and \ref{thm.a2}
are saturated. Thus, this construction demonstrates that these 
estimates are sharp. We construct $H_0$ and $H_1$ as follows.

Let $H_0$ be the multiplication operator $\calM$ in $L^2(\bbR)$ from \eqref{b6c}. 
Let $J$ be the Hilbert transform,
$$
Jf(x)=\frac1{\pi i}\text{p.v.}\int_{-\infty}^\infty \frac{f(y)}{y-x}dy,
\quad
f\in L^2(\bbR).
$$
It is well known that $J$ is unitary in $L^2(\bbR)$; it is also evident that $J^*=J$.
We set
$$
H_1=JH_0J, \quad \Dom H_1=\{Ju: u\in\Dom H_0\}.
$$
Next, we would like to represent the difference $H_1-H_0$ as a product $G_1^*G_0$. 
Let $\calK=\bbC$ and let  $G_0:L^2(\bbR)\to\bbC$ be as in Example~\ref{exa.b3}: 
$$
G_0u=\int_{-\infty}^\infty u(x)dx, \quad u\in\Dom H_0.
$$
The operator $G_0$ is not closable, but $G_0\in\Smooth(H_0)$, with $\norm{G_0}_{\Smooth(H_0)}=1$.
Further, we set 
$$
G_1u=\frac{1}{\pi i}G_0Ju, \quad \Dom G_1=\Dom H_1.
$$
Clearly, $G_1\in\Smooth(H_1)$ with $\norm{G_1}_{\Smooth(H_1)}=1/\pi$. 
Thus, the constant $A$ (see \eqref{c3a}) equals $A=1/\pi$ in this case. 
We have
\begin{theorem}\label{thm.sharp}
Let $H_0$, $H_1$, $G_0$, $G_1$ be as described above. 
Then: 
\begin{enumerate}[\rm (i)]
\item
The identity \eqref{e0} holds true (i.e. $H_1=H_0+G_1^*G_0$ in the sesquilinear form sense).
\item
For any $f\in\BMO(\bbR)$, we have
$$
f(H_1)-f(H_0)=\frac1{\pi i}\wf J.
$$
Thus, 
\begin{gather}
\norm{f(H_1)-f(H_0)}=\frac1\pi \norm{\wf}=2\norm{f}_{\BMO}
=(2\pi) A \norm{f}_{\BMO};
\label{e6}
\\
f\in B_{p,p}^{1/p}(\bbR)\cap\BMO(\bbR)\Leftrightarrow D(f)\in\Sch_p.
\notag
\end{gather}
\end{enumerate}
\end{theorem}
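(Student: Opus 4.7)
The plan is to base the entire argument on the observation that $J^2 = I$ (since $J$ is unitary and self-adjoint), which via the Borel functional calculus gives $f(H_1) = Jf(H_0)J$ for every Borel function $f$, and therefore
\begin{equation*}
f(H_1) - f(H_0) = Jf(H_0)J - f(H_0)J^2 = [J, f(H_0)]\,J.
\end{equation*}
Since $H_0 = \calM$, we have $f(H_0) = M_f$. My first key step will be to identify $\wf$ with a multiple of the commutator $[J, M_f]$: formula \eqref{d4c} gives $\tfrac{1}{2\pi i}\wf = P_+ M_f - M_f P_+$, and since $J = P_+ - P_-$ (hence $P_+ = \tfrac{1}{2}(I+J)$), we get $P_+ M_f - M_f P_+ = \tfrac{1}{2}[J, M_f]$, so $\wf = \pi i\,[J, M_f]$. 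Substituting into the display above produces the central formula $f(H_1) - f(H_0) = \tfrac{1}{\pi i}\wf J$ of part (ii).

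For part (i), I would either specialise to $f(x) = (x-z)^{-1}$ (which recovers the form identity directly via the resolvent identity \eqref{e0c}) or compute $[J, M_x]$ directly: a principal-value cancellation gives
\begin{equation*}
([J, M_x]u)(x) = \tfrac{1}{\pi i}\,\text{p.v.}\!\int\tfrac{(y-x)u(y)}{y-x}\,dy = \tfrac{1}{\pi i}\,G_0(u),
\end{equation*}
so $[J, M_x]u$ is the constant function $\tfrac{1}{\pi i}G_0(u)\cdot\1$. Pairing against $\tilde v := Jv \in \Dom H_0$ (which is well-defined for $v \in \Dom H_1$) then yields $\langle u, H_1 v\rangle - \langle H_0 u, v\rangle = (G_0 u, G_1 v)$. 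Once (i) is in hand, the constants $\norm{G_0}_{\Smooth(H_0)} = 1$ (Example~\ref{exa.b3}) and $\norm{G_1}_{\Smooth(H_1)} = \tfrac{1}{\pi}$ (from the identity $G_1\varphi(H_1) = \tfrac{1}{\pi i}G_0\varphi(H_0)J$ combined with unitarity of $J$) are immediate, so $A = 1/\pi$. The norm identity \eqref{e6} now follows from the central formula by unitarity of $J$ together with Lemma~\ref{lma.d2}: $\norm{f(H_1) - f(H_0)} = \tfrac{1}{\pi}\norm{\wf} = 2\norm{f}_{\BMO} = (2\pi)A\norm{f}_{\BMO}$. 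The Schatten equivalence follows in the same way from Lemma~\ref{lma.d4}.

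The main technical obstacle will be to interpret these commutator identities rigorously when $f \in \BMO$ is unbounded, in which case $f(H_0)$ and $f(H_1)$ are both unbounded operators and the manipulations must be read at the level of sesquilinear forms. I expect the cleanest route to be a density argument: establish the central formula first for rational $f \in \calR$, where all operators in sight are bounded and every commutator identity is entirely elementary, and then extend to $f \in \BMO(\bbR)$ using the $*$-weak density of $\calR$ in $\BMO$ (Lemma~\ref{lma.d1}) together with the $*$-weak continuity of $f \mapsto D(f)$ (Theorem~\ref{thm.e1}) and of $f \mapsto \wf$ (Lemma~\ref{lma.d2}(ii)).
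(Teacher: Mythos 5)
Your proposal is correct and follows essentially the same route as the paper: part (i) via the principal-value cancellation in $([J,M_x]u)(x)$, and part (ii) via $f(H_1)-f(H_0)=[J,f(H_0)]J$ combined with the identity $\wf=\pi i\,[J,M_f]$ (which the paper states implicitly as $Jf(H_0)-f(H_0)J=\tfrac{1}{\pi i}\wf$). The only genuine difference is that you flag the unboundedness issue and propose closing it by a density argument over $\calR$ using Lemma~\ref{lma.d1}, Theorem~\ref{thm.e1} and Lemma~\ref{lma.d2}(ii); the paper leaves this implicit, so your extra care is a small improvement rather than a change of approach.
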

\begin{proof}
Let $u,w\in L^\infty_\comp(\bbR)$, and let $v=Jw$.
Consider the left side of \eqref{e0}:
\begin{multline*}
(u,H_1v)-(H_0u,v)
=
(u,JH_0w)-(H_0u,Jw)
=
(Ju,H_0w)-(JH_0u,w)
\\
=
\frac1{\pi i}\int_\bbR \int_\bbR \frac{u(y)(x-y)\overline{w(x)}}{y-x}dy\,dx
=
-\frac1{\pi i} \overline{G_0w} G_0u=\overline{G_1v}G_0u, 
\end{multline*}
which is the right side of \eqref{e0}.
Next, 
$$
f(H_1)-f(H_0)
=
Jf(H_0)J-f(H_0)
=
(Jf(H_0)-f(H_0)J)J
=
\frac1{\pi i}\wf J.
$$
From here we get the first identity in \eqref{e6}. 
The middle identity in \eqref{e6} follows from Lemma~\ref{lma.d2}, 
and the rest 
follows from Lemma~\ref{lma.d4}. 
\end{proof}

\subsection{Quasicommutators}
Let $H_0$ and $H_1$ be self-adjoint operators in $\calH$, and let $J$ be a bounded operator in $\calH$. 
Here we consider the so-called quasicommutators
\[
D_J(f):=f(H_1)J-Jf(H_0).
\label{g1}
\]
Let us assume that 
\[
H_1J-JH_0=G_1^*G_0
\label{g2}
\]
with some operators $G_0$, $G_1$ acting from $\calH$ to $\calK$ such that
\[
G_0\in\Smooth(H_0)\quad \text{ and }\quad G_1\in\Smooth(H_1).
\label{g2a}
\] 
As usual, \eqref{g2} should be understood in the sesquilinear form sense, i.e.
\[
(Ju,H_1v)-(H_0u,J^*v)=(G_0u,G_1v), \quad u\in\Dom(H_0), \quad v\in\Dom(H_1).
\label{g3}
\]
The resolvent identity in this case takes the form
\[
R_1(z)J-JR_0(z)=-(G_1R_1(\overline{z}))^*G_0R_0(z)=-(G_0R_0(\overline{z}))^*G_1R_1(z).
\label{g3b}
\]
Similarly to \eqref{e0a}, we define the sesquilinear form
$$
d_{J,f}[u,v]:=(Ju,\overline{f}(H_1)v)-(f(H_0)u,J^*v),
\quad
u\in\Dom f(H_0) , \quad v\in\Dom f(H_1). 
$$
For bounded functions $f$ the quasicommutator $D_J(f)$ can be defined directly as in \eqref{g1} and 
\[
d_{J,f}[u,v]=(D_J(f)u,v), \quad  u\in\Dom f(H_0), \quad v\in\Dom f(H_1). 
\label{g3a}
\]
Similarly to Proposition~\ref{prp.e1}, we have
\begin{proposition}\label{prp.g2}
Assume \eqref{g2a} and \eqref{g3}. Then we have 
$d_{J,f}[u,v]=0$
if $u\in\calH^{(\sing)}(H_0)\cap \Dom(f(H_0))$ or $v\in\calH^{(\sing)}(H_1)\cap \Dom(f(H_1))$
(or both). 
\end{proposition}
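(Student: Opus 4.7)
The plan is to mirror the strategy of Proposition~\ref{prp.e1}, replacing the two forms of the resolvent identity \eqref{e0c}--\eqref{e0d} with the two forms of the quasicommutator resolvent identity \eqref{g3b} and its adjoint. The central observation is the same as before: for $G\in\Smooth(H)$ we have $G\vert_{\calH^{(\sing)}(H)}=0$, and the singular subspace is invariant under the resolvent, so $GR(z)u=0$ whenever $u\in\calH^{(\sing)}(H)$.

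First I would handle the case $u\in\calH^{(\sing)}(H_0)\cap\Dom(f(H_0))$. Applying the first form of \eqref{g3b} to $u$, the right-hand side vanishes because $G_0R_0(z)u=0$, so $R_1(z)Ju=JR_0(z)u$ for every $\Im z\neq 0$. Stone's formula then yields
\[
E_{H_1}(\Delta)Ju=JE_{H_0}(\Delta)u\quad\text{for every Borel }\Delta\subset\bbR,
\]
which extends by the usual simple-function/limit procedure to $\varphi(H_1)Ju=J\varphi(H_0)u$ for every bounded Borel $\varphi$. To remove the boundedness assumption, I would truncate $f$ to $f_n=f\cdot\1_{[-n,n]}$, apply this identity to $f_n$, and pass to the limit using that $u\in\Dom(f(H_0))$ ensures $f_n(H_0)u\to f(H_0)u$ and that $J$ is bounded; this simultaneously shows $Ju\in\Dom(f(H_1))$ and gives $f(H_1)Ju=Jf(H_0)u$. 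Consequently,
\[
d_{J,f}[u,v]=(f(H_1)Ju,v)-(f(H_0)u,J^*v)=(Jf(H_0)u,v)-(f(H_0)u,J^*v)=0.
\]

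For the second case $v\in\calH^{(\sing)}(H_1)\cap\Dom(f(H_1))$, I would take the adjoint of the first form of \eqref{g3b}, which gives
\[
J^*R_1(z)-R_0(z)J^*=-(G_0R_0(\overline{z}))^*G_1R_1(z).
\]
Applied to $v$, the right-hand side vanishes since $G_1R_1(z)v=0$, so $J^*R_1(z)v=R_0(z)J^*v$. Stone's formula and the same truncation argument then give $J^*\overline{f}(H_1)v=\overline{f}(H_0)J^*v$, with $J^*v\in\Dom(f(H_0))$. Substituting into the defining expression for $d_{J,f}[u,v]$ yields
\[
d_{J,f}[u,v]=(u,J^*\overline{f}(H_1)v)-(f(H_0)u,J^*v)=(u,\overline{f}(H_0)J^*v)-(f(H_0)u,J^*v)=0,
\]
by self-adjointness of $f(H_0)$.

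The only genuinely delicate step is the passage from bounded Borel $\varphi$ to possibly unbounded $f\in\BMO(\bbR)$, and in particular verifying membership in the appropriate domain after applying $J$ (respectively $J^*$). This is where the assumption $u\in\Dom(f(H_0))$ (respectively $v\in\Dom(f(H_1))$) is used in an essential way; everything else is a routine adaptation of the argument in Proposition~\ref{prp.e1}, with the identity $R_1=R_0$ on $\calH^{(\sing)}$ being replaced by the intertwining identity $R_1J=JR_0$ on $\calH^{(\sing)}(H_0)$ (or its adjoint on $\calH^{(\sing)}(H_1)$).
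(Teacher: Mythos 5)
Your argument is correct and follows essentially the same route as the paper's: apply the quasicommutator resolvent identity \eqref{g3b} (respectively its adjoint, to handle the case $v\in\calH^{(\sing)}(H_1)$) to obtain the intertwining relations $R_1(z)Ju=JR_0(z)u$ and $J^*R_1(z)v=R_0(z)J^*v$, then upgrade these via Stone's formula to the spectral-measure level and conclude as in Proposition~\ref{prp.e1}. The paper's proof is much terser — it states the first intertwining relation and then says ``as in the proof of Proposition~\ref{prp.e1}'' and ``in the same way'' — so your explicit treatment of the adjoint form of \eqref{g3b} and the truncation/domain argument is simply an unfolding of what those references elide.
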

\begin{proof}
If $u\in\calH^{(\sing)}(H_0)$, then for all $\Im z\not=0$ we have $G_0R_0(z)u=0$ and so, 
by the resolvent identity \eqref{g3b}, 
$$
R_1(z)Ju=JR_0(z)u. 
$$
From here, as in the proof of Proposition~\ref{prp.e1}, we obtain
$d_{J,f}[u,v]=0$ for any $f$ such that 
$u\in \Dom f(H_0)$ and $v\in \Dom f(H_1)$. 
The case $v\in\calH^{(\sing)}(H_1)$ is considered in the same way. 
\end{proof}
In full analogy with Theorem~\ref{thm.e1}, we have

\begin{theorem}\label{thm.g1}
Assume  \eqref{g2a} and \eqref{g3}. 
For any $f\in\BMO(\bbR)$ and for all $u\in\calH^{\ac}(H_0)$, $v\in\calH^{\ac}(H_1)$, 
the sesquilinear form $d_{J,f}$ satisfies the bound
$$
\abs{d_{J,f}[u,v]}\leq 2\pi A\norm{f}_{\BMO}\norm{u}_\calH\norm{v}_\calH, 
$$
where $A$ is the constant \eqref{c3a}.
Thus, the form $d_{J,f}[u,v]$ corresponds to a bounded operator $D_J(f)$ on $\calH$
in the sense of \eqref{g3a}, 
and $D_J(f)$ satisfies the norm bound 
$$
\norm{D_J(f)}\leq 2\pi A\norm{f}_{\BMO(\bbR)}. 
$$
If $f_n\to f$ $*$-weakly in $\BMO(\bbR)$, then $D_J(f_n)\to D_J(f)$ $*$-weakly in $\calB(\calH)$.
\end{theorem}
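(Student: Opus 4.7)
The plan is to mirror the proof of Theorem~\ref{thm.e1} step for step, with the only real change being that the resolvent identity \eqref{e0c} is replaced by its quasicommutator analogue \eqref{g3b}. The crucial observation is that \eqref{g3b} produces exactly the same function $F_{u,v}(z):=(G_0R_0(z)u,G_1R_1(\overline{z})v)$ and its reflection $F_{u,v}^*(z):=\overline{F_{u,v}(\overline{z})}$ that drove the estimate in Theorem~\ref{thm.e1}, so the BMO/Fefferman part of the argument transfers verbatim.

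First I would reduce to $u\in L^\infty_\comp(H_0)$ and $v\in L^\infty_\comp(H_1)$. These sets are dense in $\calH^{\ac}(H_0)$ and $\calH^{\ac}(H_1)$ respectively, and on the singular subspaces the form $d_{J,f}$ vanishes by Proposition~\ref{prp.g2}. Hence a bound of the form $\abs{d_{J,f}[u,v]}\leq 2\pi A \norm{f}_{\BMO}\norm{u}\norm{v}$ proved on this dense set extends to all of $\calH$ and defines $D_J(f)$ as a bounded operator via \eqref{g3a}.

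Next I apply Stone's formula to both terms of $d_{J,f}[u,v]$. Because $v\in L^\infty_\comp(H_1)$, the measure $(E_{H_1}(\cdot)Ju,v)$ is absolutely continuous with a bounded, compactly supported density, and Poisson regularisation gives
\[
(Ju,\overline{f}(H_1)v)=\frac{1}{2\pi i}\lim_{\eps\to 0+}\int_{\bbR}f(x)\bigl((R_1(x+i\eps)-R_1(x-i\eps))Ju,v\bigr)\,dx.
\]
Similarly, using $u\in L^\infty_\comp(H_0)$ and $J^*$ boundedness,
\[
(f(H_0)u,J^*v)=\frac{1}{2\pi i}\lim_{\eps\to 0+}\int_{\bbR}f(x)\bigl(J(R_0(x+i\eps)-R_0(x-i\eps))u,v\bigr)\,dx.
\]
Subtracting and invoking \eqref{g3b} term by term converts $(R_1(z)J-JR_0(z))u$ into $-(G_1R_1(\overline{z}))^*G_0R_0(z)u$, and pairing with $v$ yields $-F_{u,v}(z)$. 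Hence
\[
d_{J,f}[u,v]=-\frac{1}{2\pi i}\lim_{\eps\to 0+}\int_{\bbR}f(x)\bigl(F_{u,v}(x+i\eps)-\overline{F_{u,v}^*(x+i\eps)}\bigr)\,dx,
\]
which is exactly identity \eqref{e4} from the proof of Theorem~\ref{thm.e1}.

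From this point on the proof is copied verbatim: the Kato smoothness characterisation \eqref{b1} places $F_{u,v}$ and $F_{u,v}^*$ in $H^1(\bbC_+)$ with $H^1$-norms controlled by $\norm{G_0}_{\Smooth(H_0)}^2\norm{u}^2$ and $\norm{G_1}_{\Smooth(H_1)}^2\norm{v}^2$, Fefferman duality applied to $T_f$ and $T_{\overline{f}}$ produces the $\BMO$ norm, and the Cauchy--Schwarz optimisation in the parameter $\alpha$ yields the constant $2\pi A$. The $*$-weak continuity $D_J(f_n)\to D_J(f)$ likewise follows since the formula depends on $f$ only through $T_f$ and $T_{\overline{f}}$, exactly as at the end of the proof of Theorem~\ref{thm.e1}. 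I do not anticipate a genuine obstacle: the only care needed is the bookkeeping of shifting $J$ across inner products before Stone's formula, which is trivial because $J$ is bounded, and the verification that $F_{u,v}$ is analytic in $\bbC_+$, which uses the same anti-holomorphy-of-$R_1(\overline{z})$ observation as in Section~\ref{sec.e}.
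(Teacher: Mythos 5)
Your proposal is correct and is exactly the paper's argument: the paper proves Theorem~\ref{thm.g1} by stating that the proof of Theorem~\ref{thm.e1} repeats word for word, with the resolvent identity \eqref{e0c} replaced by its quasicommutator form \eqref{g3b}, which is precisely the substitution you carry out. The details you supply (Stone's formula applied to the measures $(E_{H_1}(\cdot)Ju,v)$ and $(E_{H_0}(\cdot)u,J^*v)$, arriving at the same functions $F_{u,v}$, $F_{u,v}^*$ and the identity \eqref{e4}) are the correct unpacking of that remark.
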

The proof repeats the proof of Theorem~\ref{thm.e1} word for word;
the only difference is that the required resolvent identity in this case has the form \eqref{g3b}. 

Furthermore, repeating word for word the proof of Theorem~\ref{thm.e2}, we establish the 
modified Birman-Solomyak formula 
$$
D_J(f)=\DOI(\wf)
$$
for all $f\in \BMO(\bbR)$. 
Thus, we can apply the compactness Lemma~\ref{lma.c5} and the Schatten bounds Theorem~\ref{thm.c6}:
\begin{theorem}\label{thm.g2}
Assume \eqref{g2a} and \eqref{g3}; 
let $D_J(f)$ be as defined above. Assume $f\in \CMO(\bbR)$ and assume in addition that at least
one of the inclusions 
$$
G_0\in\Smooth_\infty(H_0), \quad G_1\in\Smooth_\infty(H_1)
$$ 
holds true. 
Then $D_J(f)$ is compact. 
Further, let $p$, $q$, $r$ be finite positive indices satisfying $\frac1p=\frac1q+\frac1r$, 
and let $A_{q,r}$ be as in \eqref{c3a}. Then the Schatten class bound
$$
\norm{D_J(f)}_p\leq 2\pi C_1(p)A_{q,r}\norm{f}_{B_{p,p}^{1/p}(\bbR)}, 
$$
holds true for all $f\in B_{p,p}^{1/p}(\bbR)\cap\BMO(\bbR)$.
It extends to $q=\infty$ (resp. $r=\infty$), if one replaces the class $\Smooth_q(H_0)$ (resp. $\Smooth_r(H_1)$)
by $\Smooth(H_0)$ (resp. $\Smooth(H_1)$). 
\end{theorem}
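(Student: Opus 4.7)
The plan is to mirror exactly the proof of Theorem~\ref{thm.e3}, using as the crucial input the modified Birman--Solomyak formula $D_J(f)=\DOI(\wf)$ for all $f\in\BMO(\bbR)$ that the authors have just asserted (this identity in turn is proved by the same rational-function/$*$-weak-density argument as Theorem~\ref{thm.e2}, only with the resolvent identity \eqref{g3b} in place of \eqref{e0c}). Once this is in hand, both claims of the theorem reduce to applying earlier general results about the map $a\mapsto \DOI(a)$ to the specific choice $a=\wf$.

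For the compactness statement, I would argue as follows. Under the hypothesis $f\in\CMO(\bbR)$, Lemma~\ref{lma.d3} gives that $\wf\in\Sch_\infty(L^2(\bbR))$. Since by assumption at least one of $G_0\in\Smooth_\infty(H_0)$ or $G_1\in\Smooth_\infty(H_1)$ holds, Lemma~\ref{lma.c5} then yields $\DOI(\wf)\in\Sch_\infty(\calH)$. Combined with the modified Birman--Solomyak formula, this gives compactness of $D_J(f)$.

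For the Schatten bound, given $f\in B_{p,p}^{1/p}(\bbR)\cap \BMO(\bbR)$ with $\frac1p=\frac1q+\frac1r$, I chain together three estimates:
\begin{equation*}
\norm{D_J(f)}_p = \norm{\DOI(\wf)}_p \leq A_{q,r}\,\norm{\wf}_p \leq A_{q,r}\,(2\pi) C_1(p)\,\norm{f}_{B_{p,p}^{1/p}(\bbR)},
\end{equation*}
where the first equality is the modified Birman--Solomyak formula, the first inequality is Theorem~\ref{thm.c6} applied with $a=\wf$, and the last inequality is Lemma~\ref{lma.d4}. The extension to $q=\infty$ (resp.\ $r=\infty$) uses the corresponding endpoint statement already recorded in Theorem~\ref{thm.c6}, with $\Smooth_q(H_0)$ (resp.\ $\Smooth_r(H_1)$) replaced by $\Smooth(H_0)$ (resp.\ $\Smooth(H_1)$), since the factorisation argument there only uses Theorem~\ref{thm.b2} on one side.

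There is essentially no obstacle here beyond bookkeeping: the whole point of having built the DOI machinery in Section~\ref{sec.c} in terms of a factorisation $G_1^*G_0$ (rather than a bare difference $H_1-H_0$) is that it is insensitive to whether we are looking at commutators or quasicommutators. The only place one must exercise care is in verifying that the analogue of Theorem~\ref{thm.e2} goes through, and this amounts to checking the single rank-one case $f(x)=(x-z_0)^{-1}$: by \eqref{g3b}, $D_J(f)=R_1(z_0)J-JR_0(z_0)=-(G_1R_1(\overline{z_0}))^*G_0R_0(z_0)$, which matches $\DOI(\wf)$ computed from $\wf(x,y)=-(x-z_0)^{-1}(y-z_0)^{-1}$ via the definition \eqref{c5}. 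All subsequent steps (differentiation in $z_0$, partial fractions, $*$-weak density of $\calR$ in $\BMO$, $*$-weak continuity of both $f\mapsto\wf$ and $\wf\mapsto\DOI(\wf)$) transfer verbatim.
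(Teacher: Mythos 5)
Your proposal reproduces the paper's argument: it establishes the modified Birman--Solomyak formula $D_J(f)=\DOI(\wf)$ by the same rational/$*$-weak density scheme as Theorem~\ref{thm.e2} (using \eqref{g3b} in place of \eqref{e0c}), and then invokes Lemma~\ref{lma.d3} with Lemma~\ref{lma.c5} for compactness and Theorem~\ref{thm.c6} with Lemma~\ref{lma.d4} for the Schatten bound, exactly as the paper does. This is correct and matches the paper's route, only spelled out in more detail.
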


\subsection{Products of functions}\label{sec.g7}

Let $H_0$ and $H_1$ be self-adjoint operators in $\calH$, and let $\varphi_0, \varphi_1\in L^\infty(\bbR)$. 
Here we consider the products
\[
\varphi_1(H_1)^*D(f)\varphi_0(H_0),
\label{g4a}
\]
where $D(f)=f(H_1)-f(H_0)$ as before. 
The main interest of this is in taking $\varphi_0=\varphi_1=\1_\Lambda$,
where $\Lambda\subset\bbR$; this leads to \emph{local} variants
of smoothness conditions. We develop this in more detail in the forthcoming publication \cite{II}.

We assume that 
\[
H_1-H_0=G_1^*G_0
\label{g5}
\]
for some $G_0,G_1:\calH\to\calK$, where $G_0$ is $H_0$-bounded and $G_1$ is $H_1$-bounded. 
As usual, \eqref{g5} should be understood in the sesquilinear form sense, see \eqref{e0}. 
Our smoothness assumptions are now as follows:
\[
G_0\varphi_0(H_0)\in\Smooth(H_0), \quad
G_1\varphi_1(H_1)\in\Smooth(H_1).
\label{g6}
\]
We define the operator \eqref{g4a} via the sesquilinear form
$$
d[u,v]:=(\varphi_0(H_0)u,\overline{f}(H_1)\varphi_1(H_1)v)-(f(H_0)\varphi_0(H_0)u,\varphi_1(H_1)v),
$$
for $u\in\Dom f(H_0)$ and $v\in\Dom f(H_1) $.

\begin{theorem}
Assume \eqref{g5} and \eqref{g6}; let $f\in\BMO(\bbR)$ and let $d$ be as above. 
Then $d[u,v]=0$, if  $u\in\calH^{(\sing)}(H_0)\cap \Dom f(H_0)$ or $v\in \calH^{(\sing)(H_1)}\cap\Dom f(H_1)$. 
Further, 
for $u\in\calH^{\ac}(H_0)$ and $v\in\calH^{\ac}(H_1)$, 
the sesquilinear form $d$ satisfies the bound
$$
\abs{d[u,v]}\leq
2\pi \norm{f}_{\BMO(\bbR)}
\norm{G_0\varphi_0(H_0)}_{\Smooth(H_0)}
\norm{G_1\varphi_1(H_1)}_{\Smooth(H_1)}
\norm{u}_\calH\norm{v}_\calH. 
$$
Thus, the sesquilinear form $d$ corresponds to a bounded operator 
$\varphi_1(H_1)^*D(f)\varphi_0(H_0)$, which satisfies 
$$
\norm{\varphi_1(H_1)^*D(f)\varphi_0(H_0)}
\leq
2\pi \norm{f}_{\BMO(\bbR)}
\norm{G_0\varphi_0(H_0)}_{\Smooth(H_0)}
\norm{G_1\varphi_1(H_1)}_{\Smooth(H_1)}.
$$
If $f_n\to f$ $*$-weakly in $\BMO(\bbR)$, then 
$$
\varphi_1(H_1)^*D(f_n)\varphi_0(H_0)\to \varphi_1(H_1)^*D(f)\varphi_0(H_0)
$$
$*$-weakly in $\calB(\calH)$. 
\end{theorem}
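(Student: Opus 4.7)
The plan is to reduce this result directly to Theorem~\ref{thm.e1} and Proposition~\ref{prp.e1} by absorbing the bounded functional calculus factors $\varphi_j(H_j)$ into the smooth operators $G_j$. The key algebraic observation is that, since $\varphi_j(H_j)$ commutes with every $R_j(z)$ and with $f(H_j)$ in the functional calculus, the form $d[u,v]$ coincides with $d_f[\varphi_0(H_0)u,\varphi_1(H_1)v]$, where $d_f$ is the form from Section~\ref{sec.e}. All arguments then amount to re-running the proofs of Proposition~\ref{prp.e1} and Theorem~\ref{thm.e1} with $G_j$ systematically replaced by $G_j\varphi_j(H_j)$ in the places where Kato smoothness is invoked.

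For the vanishing on singular subspaces, I would argue as follows. If $u\in\calH^{(\sing)}(H_0)$, then $\varphi_0(H_0)u\in\calH^{(\sing)}(H_0)$ and hence $R_0(z)\varphi_0(H_0)u\in\calH^{(\sing)}(H_0)$ for $\Im z\neq0$. Since by \eqref{g6} the operator $G_0\varphi_0(H_0)$ is Kato $H_0$-smooth, it annihilates $\calH^{(\sing)}(H_0)$, so $G_0R_0(z)\varphi_0(H_0)u=G_0\varphi_0(H_0)R_0(z)u=0$. Using the resolvent identity \eqref{e0c} applied to $\varphi_0(H_0)u$, we conclude $R_1(z)\varphi_0(H_0)u=R_0(z)\varphi_0(H_0)u$, so Stone's formula yields coincidence of spectral measures on $\varphi_0(H_0)u$ and hence $d[u,v]=0$. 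The case $v\in\calH^{(\sing)}(H_1)$ is symmetric via \eqref{e0d}.

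For the main norm bound, I would follow the proof of Theorem~\ref{thm.e1} verbatim, starting from dense subsets $u\in L^\infty_\comp(H_0)$, $v\in L^\infty_\comp(H_1)$. The Poisson-kernel/resolvent identity yields
\[
d[u,v]=-\frac{1}{2\pi i}\lim_{\eps\to0+}\int_\bbR f(x)\bigl(\widetilde F(x+i\eps)-\widetilde F(x-i\eps)\bigr)dx,
\label{plan1}
\]
where $\widetilde F(z):=(G_0\varphi_0(H_0)R_0(z)u,G_1\varphi_1(H_1)R_1(\overline z)v)$; the commutation of $\varphi_j(H_j)$ with $R_j(\cdot)$ is what lets one move $\varphi_j(H_j)$ next to $G_j$. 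Applying the integral form \eqref{b1} of Kato smoothness to $G_0\varphi_0(H_0)$ and $G_1\varphi_1(H_1)$ in place of $G_0,G_1$, together with the arithmetic--geometric mean inequality as in Theorem~\ref{thm.e1}, gives $\widetilde F,\widetilde F^{*}\in H^1(\bbC_+)$ with
\[
\norm{\widetilde F}_{H^1(\bbC_+)}+\norm{\widetilde F^*}_{H^1(\bbC_+)}\leq (2\pi)^2\norm{G_0\varphi_0(H_0)}_{\Smooth(H_0)}\norm{G_1\varphi_1(H_1)}_{\Smooth(H_1)}\norm{u}\norm{v}.
\]
Fefferman's duality applied to \eqref{plan1} then yields the claimed bound on $d[u,v]$, and density extends it to all of $\calH^{\ac}(H_0)\times\calH^{\ac}(H_1)$; combined with the singular case above, we obtain a bounded operator on $\calH$ with the stated norm estimate. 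The $*$-weak convergence statement reads off \eqref{plan1}: for fixed $u,v$ the functions $\widetilde F,\widetilde F^*$ lie in $H^1(\bbC_+)$ and do not depend on $f$, so $*$-weak convergence $f_n\to f$ in $\BMO$ gives pointwise convergence of matrix elements, which together with the uniform norm bound just proved implies $*$-weak operator convergence.

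The main technical obstacle I anticipate is the initial domain reduction: one needs to justify writing $d[u,v]=d_f[\varphi_0(H_0)u,\varphi_1(H_1)v]$ and then running the Poisson-kernel computation of Theorem~\ref{thm.e1} on $\varphi_0(H_0)u$ and $\varphi_1(H_1)v$, which need not lie in $L^\infty_\comp(H_j)$ even if $u,v$ do (because $\varphi_j$ is only assumed $L^\infty$). The clean fix is to choose dense subsets of the form $u=\psi_0(H_0)u_0$, $v=\psi_1(H_1)v_1$ with $\psi_j\in L^\infty_\comp$ and $u_0,v_0\in\calH^{\ac}(H_j)$: then $\varphi_j(H_j)u=(\varphi_j\psi_j)(H_j)u_0$ still has spectral density bounded and compactly supported, so the $L^2$-to-$H^1$ computations are unchanged. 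Once the norm bound is established on this dense set, the sesquilinear form extends by continuity, closing the argument.
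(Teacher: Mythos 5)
Your proof is correct, but it takes a different organizational route from the paper. The paper proves this theorem in three lines by invoking its quasicommutator machinery: it sets $J=\varphi_1(H_1)^*\varphi_0(H_0)$, observes that $H_1J-JH_0=(G_1\varphi_1(H_1))^*(G_0\varphi_0(H_0))$ and $d[u,v]=d_{J,f}[u,v]$ (the form from \eqref{g3a}), and then appeals directly to Proposition~\ref{prp.g2} and Theorem~\ref{thm.g1}. You instead re-run the proofs of Proposition~\ref{prp.e1} and Theorem~\ref{thm.e1} from scratch, using the commutation of $\varphi_j(H_j)$ with the resolvents and the functional calculus to move the $\varphi_j$'s next to the $G_j$'s so that the Kato smoothness hypotheses \eqref{g6} can be invoked in place of \eqref{ee0a}. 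Since the paper's Theorem~\ref{thm.g1} is itself proved by repeating Theorem~\ref{thm.e1} ``word for word,'' the two routes share the same core computation (the Poisson-kernel/resolvent formula and the Fefferman-duality estimate on $H^1(\bbC_+)$); the paper's path is more economical given the infrastructure it has already built, while yours is more self-contained and makes the role of the commutation step explicit. All the individual steps in your argument check out: the factorization of $d[u,v]$ via the resolvent identity \eqref{e0c} applied to $\varphi_0(H_0)u$, the $H^1$ bound for $\widetilde F$ and $\widetilde F^*$, and the $*$-weak continuity all go through exactly as you describe.

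One minor remark: the ``technical obstacle'' you raise at the end is actually not an obstacle. If $u\in L^\infty_\comp(H_0)$ has bounded, compactly supported spectral density $\rho(\lambda)=d(E_{H_0}(\cdot)u,u)/d\lambda$, and $\varphi_0\in L^\infty(\bbR)$, then $\varphi_0(H_0)u$ has spectral density $|\varphi_0(\lambda)|^2\rho(\lambda)$, which is again bounded with compact support; hence $\varphi_0(H_0)u\in L^\infty_\comp(H_0)$, and likewise for $v$. So the dense subsets $L^\infty_\comp(H_j)$ are already invariant under $\varphi_j(H_j)$ and the extra factorization $u=\psi_0(H_0)u_0$ is unnecessary, though of course harmless.
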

\begin{proof}
Let 
$$
J=\varphi_1(H_1)^*\varphi_0(H_0)
$$
and let $d_{J,f}$ be as defined in \eqref{g3a}.
Observe that we have
$$
H_1J-JH_0=(G_1\varphi_1(H_1))^*(G_0\varphi_0(H_0))
$$
in the sesquilinear form sense, and 
$$
\varphi_1(H_1)^*(f(H_1)-f(H_0))\varphi_0(H_0)
=
f(H_1)\varphi_1(H_1)^*\varphi_0(H_0)
-
\varphi_1(H_1)^*\varphi_0(H_0)f(H_0),
$$
or, in different notation, 
$$
d[u,v]=d_{J,f}[u,v]. 
$$
Thus, the operator identity
\[
\varphi_1(H_1)^*D(f)\varphi_0(H_0)=D_J(f)
\label{g8}
\]
holds true and our claims follow immediately from Proposition~\ref{prp.g2} and Theorem~\ref{thm.g1}.
\end{proof}
As an immediate consequence of \eqref{g8} and of Theorem~\ref{thm.g2}, 
we also obtain the corresponding compactness result and the Schatten norm bounds. 
\begin{theorem}
Assume \eqref{g5} and \eqref{g6}, 
and let $f\in \CMO(\bbR)$. 
Assume that at least one of the two inclusions
$$
G_0\varphi_0(H_0)\in\Smooth_\infty(H_0),\quad  
G_1\varphi_1(H_1)\in\Smooth_\infty(H_1)
$$ 
holds true. 
Then $\varphi_1(H_1)^*D(f)\varphi_0(H_0)$ is compact. 
Further, let $p$, $q$, $r$ be finite positive indices such that $\frac1p=\frac1q+\frac1r$,
and let $f\in B_{p,p}^{1/p}(\bbR)\cap\BMO(\bbR)$. 
Then we have the bounds
\begin{multline*}
\norm{\varphi_1(H_1)^*D(f)\varphi_0(H_0)}_p
\\
\leq
2\pi C_1(p)\norm{f}_{B_{p,p}^{1/p}(\bbR)}
\norm{G_0\varphi_0(H_0)}_{\Smooth_q(H_0)}
\norm{G_1\varphi_1(H_1)}_{\Smooth_r(H_1)}. 
\end{multline*}
This extends to the case $q=\infty$ (resp. $r=\infty$), if one replaces
the class $\Smooth_q(H_0)$ (resp. $\Smooth_r(H_1)$) by 
$\Smooth(H_0)$ (resp. $\Smooth(H_1)$). 
\end{theorem}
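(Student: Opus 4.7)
The plan is to reduce everything to the quasicommutator framework established in Theorem~\ref{thm.g2}. Setting $J=\varphi_1(H_1)^*\varphi_0(H_0)$, the previous theorem has already verified, in the sesquilinear form sense, the factorisation
$$
H_1 J - J H_0 = (G_1\varphi_1(H_1))^* (G_0\varphi_0(H_0))
$$
together with the operator identity $\varphi_1(H_1)^*D(f)\varphi_0(H_0) = D_J(f)$ in \eqref{g8}. Thus the roles of $G_0, G_1$ in the hypotheses of Theorem~\ref{thm.g2} are now played by $G_0\varphi_0(H_0)$ and $G_1\varphi_1(H_1)$, and the assumption \eqref{g6} is exactly the Kato smoothness required there.

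For the compactness statement, I would invoke the compactness half of Theorem~\ref{thm.g2} applied with these replacement operators and with $f\in\CMO(\bbR)$: since by hypothesis at least one of $G_0\varphi_0(H_0)\in\Smooth_\infty(H_0)$ or $G_1\varphi_1(H_1)\in\Smooth_\infty(H_1)$ holds, the theorem gives compactness of $D_J(f)$, which by \eqref{g8} is precisely $\varphi_1(H_1)^*D(f)\varphi_0(H_0)$.

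For the Schatten bound, I would apply the Schatten half of Theorem~\ref{thm.g2} in the same way. With $A_{q,r}$ of \eqref{c3a} read off as $\norm{G_0\varphi_0(H_0)}_{\Smooth_q(H_0)} \norm{G_1\varphi_1(H_1)}_{\Smooth_r(H_1)}$, one obtains
$$
\norm{\varphi_1(H_1)^*D(f)\varphi_0(H_0)}_p = \norm{D_J(f)}_p \leq 2\pi C_1(p) \norm{f}_{B_{p,p}^{1/p}(\bbR)} \norm{G_0\varphi_0(H_0)}_{\Smooth_q(H_0)} \norm{G_1\varphi_1(H_1)}_{\Smooth_r(H_1)}
$$
for every $f\in B_{p,p}^{1/p}(\bbR)\cap\BMO(\bbR)$. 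The extension to $q=\infty$ (resp.\ $r=\infty$) is inherited from the corresponding extension in Theorem~\ref{thm.g2}.

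There is no real obstacle here: the previous theorem did all the conceptual work in verifying the quasicommutator identity and relating the two sesquilinear forms, and the current statement is a direct corollary of that reduction together with Theorem~\ref{thm.g2}. The only thing one must double-check is that the replacements $G_0 \rightsquigarrow G_0\varphi_0(H_0)$ and $G_1 \rightsquigarrow G_1\varphi_1(H_1)$ really fit the hypotheses of Theorem~\ref{thm.g2}, but this is transparent from \eqref{g6} and from the factorisation of $H_1J - JH_0$ recorded in the preceding proof.
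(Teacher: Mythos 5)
Your proof is correct and follows exactly the paper's own route: the paper states this theorem as an immediate consequence of the identity $\varphi_1(H_1)^*D(f)\varphi_0(H_0)=D_J(f)$ from \eqref{g8} together with Theorem~\ref{thm.g2}, applied with $G_0\varphi_0(H_0)$ and $G_1\varphi_1(H_1)$ in the roles of $G_0$ and $G_1$. You have simply made explicit what the paper leaves as a one-line observation.
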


\appendix
\section{Two technical proofs}

\begin{proof}[Sketch of proof of Proposition~\ref{prp.a1}]
The key point is the calculation of the asymptotics of the Fourier transform of $F_\alpha$. 
A lengthy but straightforward calculation (see e.g. \cite[Section 4]{PYa}) yields that for $a_+\not=a_-$ we have
\[
\wh F_\alpha(t)=\frac{a_+-a_-}{2\pi i} \frac1t (\log \abs{t})^{-\alpha}+O(t^{-1}(\log \abs{t})^{-\alpha-1}), \quad t\to\pm\infty, 
\label{A2}
\]
and for $a_+=a_-$ we have
\[
\wh F_\alpha(t)=a_+\alpha\frac1{t}(\log \abs{t})^{-\alpha-1}+O(t^{-1}(\log \abs{t})^{-\alpha-2}), \quad t\to\pm\infty.
\label{A3}
\]
In both cases, the $O(\cdot)$ terms can be differentiated arbitrary many times, i.e. 
\[
(d/dt)^m O(t^{-1}(\log \abs{t})^{-\alpha-1})=O(t^{-1-m}(\log \abs{t})^{-\alpha-1}). 
\label{A4}
\]
First consider the case $a_+\not=a_-$ and let us check that the series \eqref{A1} converges
if and only if $\alpha>1/p$. It is easy to see that $\wh F_\alpha(t)$ is a $C^\infty$-smooth function of $t\in\bbR$
and as a consequence, the series over $j\leq0$ converges for all $p>0$ and $\alpha\in\bbR$. 
Thus, it suffices to inspect the convergence of the series over $j\geq0$. 
By the asymptotics \eqref{A2}, we have 
\begin{multline*}
(F_\alpha*\wh w_j)(x)=\int_0^\infty e^{ixt} \wh F_\alpha(t) w_j(t)dt
=
\frac{a_+-a_-}{2\pi i}
\int_0^\infty \frac1t(\log t)^{-\alpha}w(t/2^j)e^{ixt}dt
+\text{error}
\\
=
\frac{a_+-a_-}{2\pi i}
\int_0^\infty \frac1t(\log 2^jt)^{-\alpha}w(t)e^{i2^jxt}dt
+\text{error}
\\
=
\frac{a_+-a_-}{2\pi i}(\log 2^j)^{-\alpha}
\int_0^\infty \frac1t\bigl(1+\frac{\log t}{\log 2^{j}}\bigr)^{-\alpha}w(t)e^{i2^jxt}dt
+\text{error}
\\
=
\frac{a_+-a_-}{2\pi i}(\log 2^j)^{-\alpha}\varphi(2^jx)
+\text{error}
\end{multline*}
where $\varphi$ is a Schwartz class function and the error term can be 
controlled by using the estimates \eqref{A4}. It follows that 
$$
2^j\norm{F_\alpha*\wh w_j}_{L^p(\bbR)}^p=C j^{-\alpha p}+o(j^{-\alpha p}), \quad j\to\infty,
$$
where $C\not=0$. In the same way we get
$$
2^j\norm{F_\alpha*\overline{\wh w_j}}_{L^p(\bbR)}^p=C j^{-\alpha p}+o(j^{-\alpha p}), \quad j\to\infty.
$$
It follows that the series in \eqref{A1} for $f=F_\alpha$ converges if and only if $p\alpha>1$. 

In the same way, considering the case $a_+=a_-$ and using the asymptotics \eqref{A3}, we 
conclude that the series \eqref{A1} converges if and only if $p(\alpha+1)>1$. 
\end{proof}

Finally, we give the

\begin{proof}[Proof of Lemma~\ref{lma.d1}]
The proof is effected through mapping the problem to the unit circle. 

\emph{Step 1:}
First we need to consider the analogous problem in the space $\BMO(\bbT)$, which is defined as follows. 
For $h\in L^2(\bbT)$ and $v\in H^1(\bbD)$ ($=$ the standard Hardy class on the unit disk), let 
$$
t_h(v):=\lim_{r\to1-}\int_{-\pi}^\pi h(e^{i\theta})v(re^{i\theta})\frac{d\theta}{2\pi},
$$
if the limit exists. Then $h\in\BMO(\bbT)$ if and only if both linear functionals 
$t_h$ and $t_{\overline{h}}$ are bounded on $H^1(\bbD)$. 

Let us prove that for $h\in \BMO(\bbT)$, its approximations by Fejer sums 
converge to $h$ $*$-weakly in $\BMO$. More precisely, set
\begin{equation}
h_n(z)=\sum_{j=-n}^n \biggl(1-\frac{\abs{j}}{n}\biggr)\hat h_j z^j, 
\quad\text{ where }\quad
h(z)=\sum_{j=-\infty}^\infty \hat h_j z^j. 
\label{d4b1}
\end{equation}
It is easy to see that the linear map $h\mapsto h_n$ is bounded on $H^1(\bbD)$: 
$$
\norm{h_n}_{H^1(\bbD)}\leq C\norm{h}_{H^1(\bbD)}, 
$$
and so, by duality,
\begin{equation}
\norm{h_n}_{\BMO(\bbD)}\leq C\norm{h}_{\BMO(\bbD)}. 
\label{d4b2}
\end{equation}
Next, it is clear that if $v\in H^1(\bbD)$ is a trigonometric polynomial, then 
\begin{equation}
t_{h_n}(v)\to t_{h}(v), \quad n\to\infty.
\label{d4b3}
\end{equation}
Since trigonometric polynomials are dense in $H^1(\bbD)$, by an approximation 
argument (involving \eqref{d4b2}), we obtain \eqref{d4b3} for all $v\in H^1(\bbD)$. 
Similarly, one proves that $t_{\overline{h}_n}(v)\to t_{\overline{h}}(v)$. 

\emph{Step 2:}
Let $\omega$ be the standard conformal map from the unit disk to the upper half-plane:
$$
\omega:\bbD\to\bbC_+,
\quad
\omega(\zeta)=i\frac{1+\zeta}{1-\zeta}, \quad \zeta\in\bbD.
$$
Recall (see e.g. \cite[Cor. VI.1.3]{Garnett}) 
that $f\in\BMO(\bbR)$ if and only if
$h=f\circ\omega\in\BMO(\bbT)$. 
Let $h_n$ be the Fejer sum \eqref{d4b1} of $h$, and let 
$f_n=h_n \circ \omega^{-1}$. 
By construction, $f_n$ is a rational function; let us prove that 
$f_n\to f$ $*$-weakly in $\BMO(\bbR)$. 
For $u\in H^1(\bbC_+)$, let $v\in H^1(\bbD)$ be given by 
$$
v(\zeta)=-{4\pi}(1-\zeta)^{-2} u(\omega(\zeta)).
$$
Then a direct calculation shows that 
$$
T_f(u)=t_h(\zeta v), \text{ and } T_{f_n}(u)=t_{h_n}(\zeta v).
$$
Now we get $T_{f_n}(u)\to T_{f}(u)$ by the first step of the proof. 
Similarly, one proves $T_{\overline{f}_n}(u)\to T_{\overline{f}}(u)$.
\end{proof}



\begin{thebibliography}{25}

\bibitem{AP}
{\sc A.~B.~Aleksandrov, V.~V.~Peller,}
\emph{Operator Lipschitz functions,}
Russian Math. Surveys \textbf{71} (2016), no. 4, 605--702. 





\bibitem{BS1}
{\sc M.~Sh.~Birman, M.~Z.~Solomyak,} 
\emph{Double Stieltjes operator integrals.} (Russian) 
Problems of mathematical physics. No. 1. Spectral theory and wave processes. 
(Russian) pp. 33--67. Izdat. Leningrad. Univ., Leningrad, 1966.


\bibitem{BS2}
{\sc M.~Sh.~Birman, M.~Z.~Solomyak,} 
\emph{Double Operator Integrals in a Hilbert Space,} 
Integr. Equ. Oper. Theory \textbf{47} (2003), 131--168.

\bibitem{DK}
{\sc Yu.~L.~Daletskii and S.~G.~Krein,} 
\emph{Integration and differentiation of functions of Hermitian operators 
and applications to the theory of perturbations,} (Russian) 
Voronezh. Gos. Univ. Trudy Sem. Funkcional. Anal. (1956), 1, 81--105.

\bibitem{FP0}
{\sc R.~Frank, A.~Pushnitski,}
\emph{Trace class conditions for functions of Schr\"odinger operators},
Comm. Math. Phys. \textbf{335} (2015) 477--496.

\bibitem{II}
{\sc R.~Frank, A.~Pushnitski,}
\emph{Schatten class conditions for functions of Schr\"odinger operators},
in preparation. 

\bibitem{Garnett}
{\sc J.~B.~Garnett,}
\emph{Bounded analytic functions,}
Springer, 2007.

\bibitem{Gohberg-Krein}
{\sc I. C. Gohberg, M. G. Kre{\u\i}n,}
\emph{Introduction to the theory of linear nonselfadjoint operators},
Translations of Mathematical Monographs, Vol. 18, American Mathematical Society, Providence, R.I. 1969

\bibitem{HNVW}
{\sc T.~Hyt\"onen, J.~van Neerven, M.~Veraar, L.~Weis,}
\emph{Analysis in Banach spaces, vol. 1: Martingales and Littlewood-Paley Theory,}
Springer 2016.



\bibitem{Kato1}
{\sc T.~Kato,} 
\emph{Wave operators and similarity for some non-selfadjoint operators,} 
Math. Ann. \textbf{162} (1965/1966), 258--279. 

\bibitem{Kato2}
{\sc T.~Kato,} 
\emph{Smooth operators and commutators,} 
Studia Math. \textbf{31} (1968), 535--546. 


\bibitem{McCarthy}
{\sc Ch. A. McCarthy,}
\emph{$C_p$,}
Israel J. Math \textbf{5} (1967), 249--271.

\bibitem{Peller1}
{\sc V. V. Peller,} 
\emph{Hankel operators in the theory of perturbations of unitary and self-adjoint operators,}
Funct. Anal. Appl. \textbf{19} (1985) 111--123.

\bibitem{Peller}
{\sc V.~Peller,}
\emph{Hankel operators and their applications,}
Springer, 2003.

\bibitem{PotapovSukochev}
{\sc D.~Potapov, F.~Sukochev,} 
\emph{Operator-Lipschitz functions in Schatten-von Neumann classes,} 
Acta Math. \textbf{207} (2011), no. 2, 375--389.

\bibitem{PYa}
{\sc A.~Pushnitski, D.~Yafaev,}
\emph{Best rational approximation of functions with logarithmic singularities,}
Constr. Approx. 2016. 


\bibitem{ReSi} 
{\sc M. Reed, B. Simon,} 
\textit{Methods of Modern Mathematical Physics. I Functional Analysis}, 
Revised and enlarged edition, Academic Press, 1980.


\bibitem{Rochberg}
{\sc R.~Rochberg,}
\emph{Toeplitz and Hankel operators on the Paley-Wiener space,}
Integral Equations and Operator Theory
\textbf{10} (1987) 187--235. 

\bibitem{Rotfeld}
{\sc S.~Ju.~Rotfeld,}
\emph{On singular values of the sum of completely continuous operators,}
(Russian). Problems of mathematical physics, No. 3: Spectral theory (Russian), 
pp. 81--87. Izdat. Leningrad. Univ., Leningrad, 1968.




\bibitem{Yafaev}
{\sc D.Yafaev,}
\emph{Mathematical Scattering Theory: General Theory,}
AMS 1992.


\end{thebibliography}
\end{document}